\DeclareSymbolFont{bbold}{U}{bbold}{m}{n}
\DeclareSymbolFontAlphabet{\mathbbold}{bbold}
\def\ex{\mathrm{ex}}
\def\F{\mathcal{F}}
\def\J{\mathcal{J}}
\def\cF{\F}
\def\cJ{\J}
\def\cC{\mathcal{C}}
\def\cD{\mathcal{D}}
\def\cP{\mathcal{P}}
\def\cQ{\mathcal{Q}}
\def\cS{\mathcal{S}}
\def\cT{\mathcal{T}}
\def\cH{\mathcal{H}}
\newtheorem{thm}{Theorem}[section]
\newtheorem{cor}[thm]{Corollary}
\newtheorem{lem}[thm]{Lemma}
\newtheorem{prop}[thm]{Proposition}
\newtheorem{conj}[thm]{Conjecture}
\newtheorem{Definition}{Definition}
\begin{document}

\title{Many Tur\'an exponents via subdivisions}
\author{
	Tao Jiang\thanks{Department of Mathematics, Miami University, Oxford,
		OH 45056, USA. E-mail: jiangt@miamioh.edu. Research supported in part by NSF grant DMS-1855542.}
	\quad \quad Yu Qiu \thanks{
		School of Mathematical Sciences,
		University of Science and Technology of China, Hefei, 230026,
		P.R. China. Email: yuqiu@mail.ustc.edu.cn. Research supported by China Scholarship Council.
		\newline\indent
		{\it 2010 Mathematics Subject Classifications:}
		05C35.\newline\indent
		{\it Key Words}: Exponent conjecture, Tur\'an number, subdivision.
} }

\date{August 6, 2019}
\maketitle
\begin{abstract}
Given a graph $H$ and a positive integer $n$,  the {\it Tur\'an number} $\ex(n,H)$ is the maximum number of edges in an $n$-vertex graph that does not contain $H$ as a subgraph.
	A real number $r\in(1,2)$ is called a {\it Tur\'an exponent} if there exists a bipartite graph $H$ such that $\ex(n,H)=\Theta(n^r)$. A long-standing conjecture of Erd\H{o}s and Simonovits states that $1+\frac{p}{q}$ is a Tur\'an exponent for all positive integers $p$ and $q$ with $q> p$. 
	
	  In this paper, we build on recent developments on the conjecture to establish a large family of new Tur\'an exponents. In particular, it follows from our main result that $1+\frac{p}{q}$ is a Tur\'an exponent for all positive integers $p$ and $q$ with $q> p^2$.
\end{abstract}

\section{Introduction}
\subsection{Rational exponent conjecture}
Given a family $\cH$ of graphs, the Tur\'an number $\ex(n,\cH)$ is the largest number of edges in an $n$-vertex graph that does not contain any member of $\cH$ as a subgraph. When $\cH$ consists of one single graph $H$, we write $\ex(n,H)$ for $\ex(n,\{H\})$.

Determining Tur\'an numbers for various graphs is one of the central problems in extremal graph theory. The celebrated Erd\H{o}s-Stone-Simonovits theorem states that for any non-bipartite graph $H$, $\ex(n,H)=(1-\frac{1}{\chi(H)-1})\binom{n}{2}+o(n^2)$, where $\chi(H)$ is the chromatic number of $H$. 
For bipartite graphs $H$, it follows from the K\H{o}vari-S\'os-Tur\'an theorem that $\ex(n,H)=O(n^{2-\alpha})$, where $\alpha=\alpha_H>0$ is a constant. However, finding good estimates
on $\ex(n,H)$ for bipartite graphs $H$ is difficult. Until recently,
the order of magnitude of $\ex(n,H)$ is known only for very few bipartite graphs $H$. Following \cite{KKL},
we say that a real number $r\in(1,2)$  is 
{\it realizable} (by $H$) if there exists a bipartite graph $H$ such that $\ex(n,H)=\Theta(n^r)$. 
If $r$ is realizable then we also call it a {\it Tur\'an exponent}.
 A well-known conjecture of Erd\H{o}s and Simonovits, known as the {\it rational exponent conjecture}, asserts that every rational number $r\in(1,2)$ is a Tur\'an exponent. 

\begin{conj}{\rm \cite{Erdos}}  \label{conj:exponent}
	For all positive integers $q> p$, $1+\frac{p}{q}$ is a Tur\'an exponent.
\end{conj}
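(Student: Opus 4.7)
The plan is to produce, for each rational $p/q \in (0,1)$ with $q>p$, a bipartite graph $H_{p,q}$ such that $\ex(n,H_{p,q}) = \Theta(n^{1+p/q})$. The natural candidate family, going back to Faudree--Simonovits and exploited in the recent subdivision literature, consists of balanced subdivisions of complete bipartite graphs (or more generally of small trees and multigraphs): replace each edge of $K_{a,b}$ by an internally disjoint path whose length is tuned to the target exponent. The overall strategy is thus to choose a six-tuple of structural parameters $(a,b,s_1,\ldots,s_m)$ as a function of $p,q$, and then prove matching $\Omega$ and $O$ bounds on the Tur\'an number of the resulting graph.

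For the lower bound $\ex(n,H_{p,q})=\Omega(n^{1+p/q})$, I would invoke the Bukh--Conlon random algebraic construction. Given any bipartite target $H$ whose maximum subgraph density is rational $p/q$, that construction produces, after a random sparsification, an $H$-free $n$-vertex graph with $\Omega(n^{1+p/q})$ edges; the fine structure of $H_{p,q}$ enters only through a handful of balance conditions on its subdensities. So the role of this step is to verify that the subdivided graph chosen above has the correct maximum subdensity and is admissible for the construction.

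For the upper bound $\ex(n,H_{p,q})=O(n^{1+p/q})$, I would follow the standard paradigm: pass to an almost-regular bipartite subgraph via iterated deletions, then use dependent random choice to find a large vertex set $U$ in which every moderate-size subset has at least $n^{1-p/q}$ common neighbours, and finally embed $H_{p,q}$ vertex by vertex. The subdivision structure is essential here, because once the branch vertices of $K_{a,b}$ have been embedded, the subdivision paths can be routed greedily through fresh vertices using the minimum-degree guarantees; the lengths $s_i$ are precisely the knobs that tune the exponent to land on $1+p/q$.

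The main obstacle, and the reason the conjecture remains open, lies in executing the upper bound when $q$ is only slightly larger than $p$. In that regime the branch vertices must be embedded into a set where the required common-neighbourhood size is $n^{p/q}$, nearly linear in $n$, and every multiplicative slack in the supersaturation step (logarithmic factors, or constants depending on $p$) already exceeds the available budget; the paper's theorem handles this only for $q>p^2$. Closing the remaining window $p<q\le p^2$ will, I expect, require either a genuinely new host family—such as non-uniform subdivisions of small multigraphs whose degree sequence is tailored so that no intermediate subgraph blocks the embedding—or an embedding lemma that avoids any $p$-dependent loss in the dependent-random-choice step. This is precisely where I expect the main difficulty to lie, and it is the step I would attack first in any serious attempt on the full conjecture.
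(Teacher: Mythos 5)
The statement you were asked about is Conjecture \ref{conj:exponent}, the Erd\H{o}s--Simonovits rational exponent conjecture. The paper does not prove it, and neither do you: it is an open problem, and the paper's contribution is only the partial case $q>p^2$ (Theorem \ref{thm:main}), obtained via Theorem \ref{thm:subdivision-bound}. Your proposal is a research program rather than a proof, and to your credit you say so explicitly in the last paragraph. So the verdict is simply that there is a genuine gap --- the entire upper bound $\ex(n,H_{p,q})=O(n^{1+p/q})$ for general $p<q$ is missing, and no known technique supplies it. Your lower-bound step is sound in outline: Bukh--Conlon (Theorem \ref{BC-theorem}) does give $\Omega(n^{2-1/\rho})$ for blowups of balanced rooted trees of every rational density, so the lower bound is not the issue.

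One concrete correction to your plan for the upper bound: dependent random choice is the wrong tool in this regime. For exponents of the form $1+\frac{p}{q}$ with $p/q$ small (the sparse end of the spectrum), the required common-neighbourhood sizes are far too small for DRC to produce anything useful, and the paper explicitly remarks that DRC-type tools succeed only at the denser end. What actually drives the partial results (Conlon--Janzer--Lee, Janzer, and the present paper) is a different machinery: regularize to a $K$-almost-regular graph (Lemma \ref{lem: almost-regular}), count \emph{admissible}, \emph{light}, and \emph{heavy} paths and spiders, show that heavy structures are rare (Lemmas \ref{lem:heavy-paths} and \ref{lem:heavy-spiders}), and then pigeonhole to find $f(h,L)$ light spiders on a common leaf vector, which is a contradiction. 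Your closing suggestion of non-uniform subdivisions as the host family is in fact exactly what the paper uses ($t*S^s_{b,k}$, the blowup of a spider with one short leg of length $b$ and $s-1$ legs of length $k$); the bottleneck is not the choice of host graph but the upper-bound argument for it when $b$ is large relative to $k$, i.e.\ when $q$ is close to $p$.
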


Until recently, the only rationals  in $(1,2)$ for which the conjecture was known to be true were rationals of the form $1+\frac{1}{q}$ and $2-\frac{1}{q}$ for positive integers $q\geq 2$, realized by so-called theta graphs and complete bipartite graphs, respectively. In a recent breakthrough work, Bukh and Conlon \cite{BC} showed that for any rational number $r\in(1,2)$, there exists a finite family $\cH_r$ of graphs such that $\ex(n,\cH_r)=\Theta(n^r)$. Bukh and Conlon's work  has, to a large extent, rejuvenated people's interest on Conjecture \ref{conj:exponent}. In the last year or so, several new infinite sequences of new Tur\'an exponents have been obtained by various groups. First, Jiang, Ma, and Yepremyan \cite{JMY} showed that $2-\frac{2}{2m+1}$ is realizable by generalized cubes and that $\frac{7}{5}$ is realizable by the so-called $3$-comb-pasting graph. A few months later, Kang, Kim, and Liu \cite{KKL} showed that for all positive integers $p<q$, where $q\equiv \pm1 \pmod{p}$, $2-\frac{p}{q}$ is realizable. More specifically,  rationals of the form $2-\frac{t}{st-1}$, where $s,t\geq 2$, are realized by the so-called {\it blowups} of certain height $2$ trees. (We will define blowups precisely in subsection 1.2.)
Rationals of the form $2-\frac{t}{st+1}$ are realized by graphs obtained from theta graphs via some iterative operations. 
More recently, some new sequences of Tur\'an exponents were obtained along the study of Tur\'an numbers of subdivisions.
For any integers $s,t\geq 1, k\geq 2$, let $K_{s,t}^k$ denote the graph obtained from the complete bipartite graph $K_{s,t}$ by
subdividing each of its edge $k-1$ times. Let $L_{s,t}(k)$ by obtained from $K_{s,t}^k$ by adding an extra vertex joined
to all vertices in the part of $K_{s,t}$ of size $t$.
Confirming a conjecture of Kang, Kim, and Liu \cite{KKL}, Conlon, Janzer, and Lee \cite{CJL} showed that 
there exists $t_0$ such that for all  integers $s,k\geq 1, t\geq t_0$, $\ex(n,L_{s,t}(k))=\Theta(n^{1+\frac{s}{sk+1}})$, and thus establishing 
$1+\frac{s}{sk+1}$ as Tur\'an exponents. Subsequently, in verifying a conjecture of Conlon, Janzer, and Lee \cite{CJL}, Janzer \cite{Janzer2} proved that there exists a $t_0$ such that for all integers $s,k\geq 2, t\geq t_0$, $\ex(n,K_{s,t}^k)=\Theta(n^{1+\frac{s-1}{sk}})$, thus establishing $1+\frac{s-1}{sk}$ as Tur\'an exponents. Earlier, 
 Conlon, Janzer, Lee \cite{CJL} had proven the conjecture for $k=2$, while Jiang and Qiu \cite{JQ} proved the conjecture for $k=3,4$.

\subsection{Our results}

In this paper, we build on the recent work on subdivisions to establish the following large three-parameter family of Tur\'an exponents, which include all the ones obtained by Conlon, Janzer, and Lee \cite{CJL} and by Janzer  \cite{Janzer2}.
\begin{thm}\label{thm:main}
	For any positive integers $p,k,b$ with $k\ge b$, $1+\frac{p}{kp+b}$ is a Tur\'an exponent.
\end{thm}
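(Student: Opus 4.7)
The plan is to prove Theorem~\ref{thm:main} by exhibiting, for each triple $(p,k,b)$ with $k \ge b$ and each sufficiently large $t$, a bipartite graph $H = H_{p,k,b,t}$ with $\ex(n, H) = \Theta(n^{1+p/(kp+b)})$. A natural candidate interpolates between the two known endpoint cases $b = 1$ (which would give $L_{p,t}(k)$) and $b = k$ (which would give $K_{p+1,t}^{k}$): start from $K_{p+1,t}$ with parts $A \cup \{v\}$ and $B$, where $|A| = p$ and $|B| = t$; subdivide each edge between $A$ and $B$ into an internally disjoint path of length $k$, and subdivide each edge from $v$ to $B$ into an internally disjoint path of length $b$. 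A direct vertex/edge count (the ``balanced density'' of $H$) should match the target exponent $1 + p/(kp+b)$.

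For the lower bound $\ex(n, H) = \Omega(n^{1+p/(kp+b)})$, I would use the random polynomial construction of Bukh and Conlon~\cite{BC}, adapted to subdivisions as in~\cite{CJL,JMY,Janzer2}. One defines a bipartite graph on two copies of $\mathbb{F}_q^d$ whose edges are given by random polynomials of carefully chosen degree, and bounds the expected number of copies of $H$ (or of its constituent subpath systems) so that a small edge-deletion argument yields an $H$-free graph of the desired density.

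The upper bound $\ex(n, H) = O(n^{1+p/(kp+b)})$ is the main task. Following the framework of Janzer~\cite{Janzer2} refined in~\cite{CJL,JQ}, I would start from a graph $G$ with the assumed number of edges, pass to an almost-regular subgraph of average degree $d = \Theta(n^{p/(kp+b)})$, and appeal to a walk-counting supersaturation lemma: unless $H \subseteq G$, the number of length-$k$ walks and the number of length-$b$ walks in $G$ are pinned near their trivial lower bounds. From these near-tight walk counts one locates a set $B \subseteq V(G)$ of size $t$ that is the common endpoint-set of $p$ vertex-disjoint systems of $t$ internally disjoint length-$k$ paths (providing the image of $A$) and, simultaneously, of one system of $t$ internally disjoint length-$b$ paths from a single vertex $v$ (providing the image of the extra vertex), thereby producing a copy of $H$.

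The main technical obstacle is performing this simultaneous extraction while keeping all subdivision vertices distinct. The hypothesis $k \ge b$ enters precisely here: length-$k$ paths are the scarcer resource, so one should pin down the common set $B$ using the length-$k$ walk count first and then, within the remaining part of $G$, harvest the more plentiful length-$b$ paths to locate $v$ and the last batch of subdivision vertices; trying to extract the length-$b$ paths first would leave too little control to recover length-$k$ paths on the same endpoints. I expect that once the correct weighted walk-count inequality is formulated, the argument largely parallels~\cite{Janzer2}, and that the boundary cases $b=1$ and $b=k$ specialize to the theorems of~\cite{CJL} and~\cite{Janzer2} respectively.
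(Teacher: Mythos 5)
Your target graph is the right one: after setting $s = p+1$, the graph you describe (one vertex $v$ joined to the $t$-side by paths of length $b$, and $p$ vertices joined by paths of length $k$) is exactly the paper's $t*S^{p+1}_{b,k}$, and the lower bound via Bukh--Conlon is also exactly what the paper does. The global structure of the upper bound — regularize to an almost-regular subgraph, then argue by supersaturation — also matches the paper's Theorem~\ref{thm:regular}. But the proposal has a genuine gap at the crucial step. You write that once a walk-count inequality is formulated, ``the argument largely parallels~\cite{Janzer2}.'' The paper explicitly contradicts this: as Janzer himself noted, the method of~\cite{CJL,JQ,Janzer2} for controlling \emph{heavy paths} does not carry over when the spider has legs of two different lengths. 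Establishing the heavy-path bound (Lemma~\ref{lem:heavy-paths}) for $t*S^s_{b,k}$-free graphs is precisely the new contribution of the paper, and it requires a nontrivial case split between ``long'' ($j > (k+b)/2$) and ``short'' ($j \le (k+b)/2$) heavy paths, with the short case built from a sequence of new lemmas (\ref{lem:cleaning1}--\ref{lem:additional-spiders}) about constructing $N$-legged spiders inside the heavy-path structure. Your plan gives no indication of how to do this, and simply saying ``parallel Janzer'' would not close the argument.

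Two secondary points are also off. First, the paper does not count ``length-$k$ walks'' and ``length-$b$ walks'' separately and compare them to trivial lower bounds; it counts spiders with the full length vector $(b,k,\ldots,k)$, subtracts those containing a heavy path of \emph{any} intermediate length $2\le j\le k$ or a heavy sub-spider, and derives a contradiction from too many light spiders sharing one leaf vector. Second, the role of $k\ge b$ is not ``length-$k$ paths are the scarcer resource, so extract them first.'' It enters (i) in the lower bound, because $k\ge b$ is what makes $S^s_{b,k}$ a \emph{balanced} rooted tree, a hypothesis of the Bukh--Conlon theorem, and (ii) in the upper bound, because the longest leg has length $k$ (so one only needs to control heavy paths of length at most $k$) and because the long/short dichotomy at $(k+b)/2$, together with the non-negativity of $p = k + r - 2j$ in the short case, relies on $j\le\frac{k+b}{2}\le k$. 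In the actual construction of $t*S^s_{b,k}$ (e.g., in Lemma~\ref{lem:long}), the paper first grows a height-$b$ spider from a single vertex $w$ and only then attaches the height-$k$ legs through common-neighborhood arguments, which is the opposite order of what your heuristic suggests.
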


As an immediate corollary, we get the following easily stated result.
\begin{thm}
	For any positive integers $p$ and $q$ with $q> p^2$, $1+\frac{p}{q}$ is a Tur\'an exponent.
\end{thm}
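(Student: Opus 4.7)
The plan is to deduce Theorem~1.2 as a direct corollary of Theorem~1.1. Given positive integers $p$ and $q$ with $q>p^2$, it suffices to exhibit positive integers $k$ and $b$ with $k\geq b$ such that $q=kp+b$, because then Theorem~1.1 immediately gives that $1+\frac{p}{q}=1+\frac{p}{kp+b}$ is a Tur\'an exponent. So the entire task reduces to a short arithmetic reparametrization of the pair $(p,q)$.

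To produce such $k$ and $b$, I would perform Euclidean division of $q$ by $p$ and split into two cases. If $p\nmid q$, write $q=kp+b$ with $1\leq b\leq p-1$; then $kp=q-b>p^2-p$, so $k>p-1$, i.e.\ $k\geq p>b$, and the required inequality holds. If $p\mid q$, write $q=mp$; the hypothesis $q>p^2$ forces $m\geq p+1$, so the adjusted choice $k=m-1$, $b=p$ gives $q=kp+b$ with $k=m-1\geq p=b$. In either case the pair $(k,b)$ meets the hypothesis $k\geq b\geq 1$ of Theorem~1.1.

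There is essentially no obstacle here: the content of Theorem~1.2 is entirely packaged inside Theorem~1.1, and the only thing to verify is that every rational of the form $\frac{p}{q}$ with $q>p^2$ can be written as $\frac{p}{kp+b}$ with $k\geq b\geq 1$. The mild subtlety worth mentioning is the divisible case $p\mid q$, where the naive Euclidean remainder is zero and one must shift by writing $b=p$ and $k=m-1$; the bound $q>p^2$ is exactly what guarantees this shift remains consistent with $k\geq b$.
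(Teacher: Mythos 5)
Your argument is correct and is the natural way to unpack what the paper leaves implicit when it calls this result an "immediate corollary" of Theorem~1.1: you write $q=kp+b$ via Euclidean division and verify $k\geq b\geq 1$, handling the divisible case by shifting $b$ up to $p$ and $k$ down by one, with $q>p^2$ exactly supplying $k\geq p\geq b$ in both cases. This matches the intended reduction, so there is nothing further to add.
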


Using a reduction lemma of Kang, Kim, and Liu \cite{KKL}, Theorem \ref{thm:main} also yields
\begin{cor} \label{cor:dense1}
	For any integers $b,p,s\ge1$ and $k\ge0$, if $k\ge b-1$, then $2-\frac{kp+b}{s(kp+b)+p}$ is a Tur\'an exponent.
\end{cor}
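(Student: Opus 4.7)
The plan is to derive Corollary \ref{cor:dense1} directly from Theorem \ref{thm:main} by invoking the reduction lemma of Kang, Kim, and Liu \cite{KKL}. That lemma converts a Tur\'an exponent of the form $1+\frac{p}{q}$ into a one-parameter family of denser Tur\'an exponents $2-\frac{q-p}{s(q-p)+p}$, one for each integer $s\ge 1$; on the graph side, it is implemented by a balanced blowup-with-root construction applied to a bipartite graph $H$ realizing $1+\frac{p}{q}$, and the $s=1$ case tautologically recovers the input exponent.

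Given this tool, the corollary follows by a routine algebraic substitution. For parameters $p,s\ge 1$, $b\ge 1$, and $k\ge 0$ with $k\ge b-1$, I apply Theorem \ref{thm:main} to the triple $(p,\,k+1,\,b)$: the hypothesis $k+1\ge b$ is exactly the condition $k\ge b-1$, so $1+\frac{p}{(k+1)p+b}$ is a Tur\'an exponent. Writing $q=(k+1)p+b$, I note that $q-p=kp+b$, and then the KKL reduction for this $q$ and the given $s$ yields
\[
2-\frac{q-p}{s(q-p)+p}\;=\;2-\frac{kp+b}{s(kp+b)+p}
\]
as a Tur\'an exponent, which is exactly the statement of Corollary \ref{cor:dense1}.

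The only substantive point that needs to be recorded is that the extremal bipartite graphs produced in our proof of Theorem \ref{thm:main} satisfy whatever structural hypothesis the KKL reduction requires, typically the existence of a distinguished root vertex and a balance condition that ensure a matching lower bound survives the blowup. Once this compatibility is verified, no new extremal arguments beyond Theorem \ref{thm:main} and \cite{KKL} are needed; the one-unit gap between the index conditions $k\ge b$ in Theorem \ref{thm:main} and $k\ge b-1$ in the corollary is accounted for entirely by the substitution $k\mapsto k+1$ above.
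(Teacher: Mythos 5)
Your proposal is correct and follows the same route as the paper: obtain the balanced realizability of $1+\frac{p}{(k+1)p+b}$ from Theorem \ref{thm:main} (together with Theorem \ref{BC-theorem}), then iterate Lemma \ref{lem:KKL} $s-1$ times, with the substitution $k\mapsto k+1$ accounting for the shift from $k\ge b$ to $k\ge b-1$. The compatibility point you flag is exactly why the paper works with ``balancedly realizable'' rather than merely ``realizable,'' and it is discharged by the Bukh--Conlon construction underlying Theorem \ref{thm:main}.
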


Corollary \ref{cor:dense1} implies the following.

\begin{cor} \label{cor:dense2}
For any positive integers $p,q$ with $q>p$, if $(q \mbox{ mod } p)\le\sqrt{p}$, then $2-\frac{p}{q}$ is a Tur\'an exponent.
\end{cor}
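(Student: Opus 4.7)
My plan is to derive Corollary 1.5 directly from Corollary 1.4 by two applications of the division algorithm. Since Corollary 1.4 already uses the symbols $p,q,k,b,s$, I will temporarily relabel the parameters appearing in Corollary 1.5 as $P$ and $Q$ (so $Q>P$ and $(Q\bmod P)\le\sqrt{P}$), keeping $p,k,b,s$ for the role they play in Corollary 1.4. The task is to exhibit admissible integers $(s,p,k,b)$ so that $\frac{kp+b}{s(kp+b)+p}=\frac{P}{Q}$, whereupon Corollary 1.4 supplies a graph $H$ with $\ex(n,H)=\Theta(n^{2-P/Q})$.

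First I would perform the division $Q=sP+r$ with $0\le r<P$ and $s\ge 1$. Matching $\frac{P}{sP+r}$ against $\frac{kp+b}{s(kp+b)+p}$ forces the identifications $p=r$ and $kp+b=P$. If $r=0$, then $Q=sP$ with $s\ge 2$, and $2-\frac{P}{Q}=2-\frac{1}{s}$ is the classical Tur\'an exponent realized by $K_{s,t}$ for sufficiently large $t$, so we are done. Otherwise $p=r\ge 1$, and the hypothesis becomes $1\le p\le \sqrt{P}$.

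Next I would perform a second division to write $P=kp+b$ with $b\ge 1$ and $k\ge b-1$. If $p\mid P$, set $b:=p$ and $k:=P/p-1\ge 0$; the required inequality $k\ge b-1$ becomes $P/p\ge p$, i.e.\ $p\le\sqrt{P}$, which holds. If $p\nmid P$, use the standard division $P=kp+b$ with $1\le b\le p-1$ and $k\ge 1$; here $k\ge b-1$ is equivalent to $P+p\ge b(p+1)$, and since $b\le p-1$ and $p^2\le P$ we obtain $b(p+1)\le(p-1)(p+1)=p^2-1\le P+p$, as required. In either subcase, $(s,p,k,b)$ satisfies the hypotheses of Corollary 1.4, completing the deduction.

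There is no genuine obstacle beyond bookkeeping; the square-root condition $(Q\bmod P)\le\sqrt{P}$ is exactly calibrated so that the remainder $r$ from the first division, when reused as the parameter $p$ of Corollary 1.4, is small enough that the second division $P=kp+b$ automatically satisfies $k\ge b-1$. Slightly larger remainders would fail this inequality (consider the extremal case $b=p-1$, $k=1$), explaining why the threshold appears at $\sqrt{P}$ rather than at something weaker.
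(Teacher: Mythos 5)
Your proposal is correct and follows essentially the same route as the paper: divide $Q$ by $P$ to get the remainder $r$, dispose of $r=0$ via the classical exponents $2-\frac{1}{s}$, then divide $P$ by $r$ to produce $(k,b)$ with $1\le b\le r$ and verify $k\ge b-1$ from $r\le\sqrt{P}$ before invoking Corollary \ref{cor:dense1}. The only difference is cosmetic (your explicit case split on $r\mid P$ versus the paper's single convention $1\le b\le r$), so nothing further is needed.
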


Theorem \ref{thm:main} follows from a theorem (Theorem \ref{thm:subdivision-bound}) that we prove on the Tur\'an number of subdivisions of $K_{s,t}$ where
different edges of $K_{s,t}$ may be subdivided different number of times. The theorem is interesting on its own and
partially answers a conjecture of Janzer (Conjecture \ref{conj:Janzer}), which we will describe in the next subsection.

\subsection{The Bukh-Conlon Conjecture and Janzer's conjecture}

At the core of the work of Bukh and Conlon \cite{BC} is the study of so-called blowups of balanced rooted trees, defined as follows (also see \cite{BC}).

\begin{Definition}
	A rooted tree $(T,R)$ consists of a tree $T$ together with an independent set $R\subseteq V(T)$, which we refer to as the roots. 	When the choice of $R$ is clear, we will simply write $T$ for $(T,R)$.
\end{Definition}

\begin{Definition}
	Given a rooted tree $(T,R)$ and a non-empty subset $S\subseteq V(T)\setminus R$, let $\rho_{T}(S)=\frac{e(S)}{|S|}$, where $e(S)$ is the number of edges in $T$ that have at least one end in $S$. Let $\rho_T=\rho_T(V(T)\setminus R)$ and call it the density of $T$. We say $(T,R)$ is balanced if $\rho_{T}(S)\ge \rho(T)$ for any non-empty subset $S\subseteq V(T)\setminus R$.
\end{Definition}
 
\begin{Definition}
	The $t$-blowup of a rooted tree $(T,R)$, denoted by $t*T_{R}$, is the union of $t$ labeled copies of $T$ which agree on $R$ but are pairwise vertex-disjoint outside $R$. If  the choice of $R$ is clear, then we write $t*T$ for $t*T_{R}$.
\end{Definition}

The key result of Bukh and Conlon \cite{BC} is the following lower bound theorem, established using an innovative random algebraic approach. Interested readers can find the full statement in \cite{BC}.
\begin{thm}{\rm \cite{BC}} \label{BC-theorem}
	Suppose that $(T,R)$ is a balanced rooted tree with density $\rho$. Then there exists an integer $t_0\ge2$ such that
	for all integers $t\geq t_0$ we have $\ex(n,t*T_R) = \Omega(n^{2-\frac1\rho})$. 
\end{thm}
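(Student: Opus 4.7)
The plan is to build the required graph via the random algebraic method pioneered in \cite{BC}. Fix a large prime power $q$ and a positive integer $d$ chosen so that $d/\rho$ is an integer (possible since $\rho$ is rational), and take the vertex set $V = \mathbb{F}_q^d$, so that $n = q^d$. Pick a random symmetric polynomial map $f \colon V \times V \to \mathbb{F}_q^{d/\rho}$ of bounded total degree $D = D(T)$, with independent uniformly random coefficients, and form the graph $G = G_f$ with edge set $\{\{x,y\} \colon f(x,y) = 0\}$. The uniformity of each coordinate of $f(x,y)$ for fixed $x \neq y$ yields $\P[xy \in E(G)] = q^{-d/\rho}$, so $\E|E(G)| = \Theta(n^{2-1/\rho})$ and with positive probability $|E(G)| = \Omega(n^{2-1/\rho})$.

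For an injection $\phi \colon R \hookrightarrow V$, let $N_\phi$ be the number of extensions of $\phi$ to an embedding of $T$ in $G$. Each such extension is an $\mathbb{F}_q$-point of a polynomial system: $m \cdot d$ coordinate unknowns (for the $m = |V(T) \setminus R|$ non-root vertices), constrained by $e(T) \cdot d/\rho = m d$ scalar equations (using $e(T)/\rho = m$). By B\'ezout, if the system is zero-dimensional then $N_\phi \leq D^{m d}$. The balanced hypothesis $e(S)/|S| \geq \rho$ for every non-empty $S \subseteq V(T) \setminus R$ translates to $e(S) \cdot d/\rho \geq |S|\,d$, i.e., every subsystem obtained by restricting to the coordinates of $S$ still has at least as many polynomial equations as scalar unknowns. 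An inductive Schwartz--Zippel argument, peeling off non-root vertices in a carefully chosen order and using this non-degeneracy at every stage, should show that, outside a low-probability event in the coefficient space of $f$, the system is zero-dimensional for every $\phi$ simultaneously, and hence $N_\phi \leq C$ for a constant $C = C(T, D)$.

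Setting $t_0 := C + 1$, any copy of $t_0 * T_R$ in $G$ would require $t_0$ pairwise vertex-disjoint extensions of some common embedding of $R$, contradicting $N_\phi \leq C$. Combined with the edge count, this produces, with positive probability, a graph witnessing $\ex(n, t * T_R) = \Omega(n^{2-1/\rho})$ for every $t \geq t_0$. The main obstacle is precisely the uniform-in-$\phi$ bound on $N_\phi$: for a single $\phi$, a standard random algebraic argument already yields the B\'ezout bound with probability $1 - O(q^{-1})$, but a naive union bound over the $\Theta(n^{|R|})$ choices of $\phi$ would overwhelm that $q^{-1}$ savings. The balanced hypothesis is what allows one to package the failure event into a single positive-codimension condition on $f$---morally, a "discriminant" for the entire family of polynomial systems parameterized by $\phi$---handled as one generic event rather than one per $\phi$. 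Without balancedness, some subsystem could be under-determined and inflate $N_\phi$ beyond any constant, destroying the construction; making the discriminant argument rigorous is the technical heart of the proof.
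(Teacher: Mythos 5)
The paper does not actually prove this theorem; it is quoted from Bukh and Conlon \cite{BC} ("Interested readers can find the full statement in \cite{BC}"), so the only meaningful comparison is with their argument. Your setup — a random polynomial map $f$ of bounded degree on $\mathbb{F}_q^d\times\mathbb{F}_q^d$ with values in $\mathbb{F}_q^{d/\rho}$, edge probability $q^{-d/\rho}$, expected edge count $\Theta(n^{2-1/\rho})$, and the reduction to bounding the number $N_\phi$ of extensions of each root embedding $\phi$ by a constant $C$ so that $t_0=C+1$ works — matches theirs. The gap lies exactly in the step you flag as the technical heart: the uniform control of $N_\phi$.

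Your proposed mechanism — a single ``discriminant'' event on the coefficients of $f$ outside of which the extension variety is zero-dimensional for every $\phi$ simultaneously — is not how the argument goes and would fail. Having at least as many equations as unknowns (which is all that balancedness gives after multiplying by $d/\rho$) does not imply zero-dimensionality, and for some root images $\phi$ the variety of extensions genuinely has many points no matter how generic $f$ is; such bad roots cannot be avoided by one positive-codimension condition, only removed afterwards. The actual proof has three ingredients your sketch is missing. (i) A Lang--Weil-type dichotomy (Bukh's lemma): a variety over $\mathbb{F}_q$ cut out by boundedly many polynomials of bounded degree has either at most $C$ points or at least $cq$ points, with $C,c$ depending only on the complexity; this upgrades the event $\{N_\phi>C\}$ to $\{N_\phi\ge cq\}$. (ii) A moment computation: balancedness is used to bound $\E[N_\phi^{\ell}]$ for a large fixed $\ell$ by counting unions of $\ell$ copies of $T$ over the common root set $\phi(R)$ — the condition $\rho_T(S)\ge\rho$ for all $S$ is precisely what makes every such union contribute $O(1)$ in expectation. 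Markov then gives $\P[N_\phi\ge cq]=O(q^{-\ell})$. (iii) A deletion argument: summing over the $q^{d|R|}$ choices of $\phi$ with $\ell$ large enough, the expected number of bad roots is negligible compared with the edge count, so one deletes a vertex from each bad root set rather than conditioning on a global event. Without (i) you have no quantitative bound on $\P[N_\phi>C]$ at all, and without (iii) the union over $\phi$ that you correctly identify as problematic cannot be closed.
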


Bukh and Conlon further made the following conjecture on a matching upper bound.

\begin{conj} {\rm \cite{BC}} \label{conj:BC}
	Suppose that $(T,R)$ is a balanced rooted tree with density $\rho$. Then for all positive integers $t$  we have $\ex(n,t*T_R) = O(n^{2-\frac1\rho})$. 
\end{conj}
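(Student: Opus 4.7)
The plan is to follow the now-standard two-step framework driving recent progress on bipartite Tur\'an problems: first a \emph{balanced supersaturation} lemma bounding, for a typical image of the root set, the number of homomorphic copies of $T$ extending it, and then a dependent random choice step that extracts $t$ pairwise internally vertex-disjoint copies sharing one such image.

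Fix $G$ on $n$ vertices with $e(G)\ge Cn^{2-1/\rho}$ for a large constant $C=C(T,t)$. I would first use balancedness to order $V(T)\setminus R$ as $v_1,\ldots,v_m$ so that, writing $d_i$ for the number of neighbors of $v_i$ in $R\cup\{v_1,\ldots,v_{i-1}\}$, the partial averages $\tfrac{1}{i}\sum_{j\le i}d_j$ all exceed $\rho$. Greedy homomorphism counting along this order, averaged over a uniform root image $\phi:R\to V(G)$, produces on the order of $n^{m-|R|}(e(G)/n^2)^{e(T)}$ extensions in expectation, which is $\Omega(n^{m-|R|})$ by the edge assumption. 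A convexity argument combined with the overlap bound supplied by balancedness should upgrade this to: on a positive fraction of root images $\phi$ there are $\Omega\bigl((e(G)/n^{2-1/\rho})^{e(T)}\bigr)$ extensions, with any proper sub-embedding $S\subsetneq V(T)\setminus R$ completing to at most $n^{m-|S|}(e(G)/n^{2})^{e(T)-e(S)}$ full copies. Dependent random choice with these parameters then yields the required $t$ internally disjoint completions, assembling a copy of $t*T_R$.

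The main obstacle --- and the reason the conjecture is still open in general --- is making the balanced supersaturation step rigorous with the correct uniformity constants. For the subdivision setting of the present paper's Theorem \ref{thm:subdivision-bound}, the layered structure of $T$ allows one to iterate Janzer's walk-counting lemma and control the distribution of extensions layer by layer. For a general balanced rooted tree with irregular branching it is not clear how to pick a single filtration of $V(T)\setminus R$ that simultaneously witnesses the density lower bound for every subtree encountered, nor that greedy embedding produces the ``almost uniform'' distribution over extensions that dependent random choice requires. A natural first target is the single-root case, where the filtration can be built greedily from the leaves inward, after which one might hope to reduce the multiple-root case to it in the spirit of Kang--Kim--Liu \cite{KKL}; carrying this out in full generality will likely require genuinely new ideas on the supersaturation side.
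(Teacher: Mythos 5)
The statement you were asked to prove is Conjecture~\ref{conj:BC}, which the paper cites as an \emph{open} conjecture of Bukh and Conlon; the paper does not prove it, and no proof is currently known. Your proposal correctly recognizes this, and you correctly identify the point of failure: making the ``balanced supersaturation for trees'' step rigorous with the right uniformity over sub-embeddings is exactly what nobody knows how to do for a general balanced rooted tree. So your self-diagnosis is accurate; there is no proof in the paper for you to be measured against.

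It is still worth contrasting your sketch with what the paper actually does in the special case it handles (Theorem~\ref{thm:subdivision-bound}, spiders $S^s_{b,k}$ rooted at their leaves), because the paper's route is genuinely different from the ``supersaturation then dependent random choice'' template you describe. After regularizing via Lemma~\ref{lem: almost-regular}, the paper works inside a $K$-almost-regular graph and runs a surgical counting argument built on the recursive notions of admissible, light, and heavy paths and spiders (Definitions~\ref{def:admissible-paths} and~\ref{def:admissible-spiders}, inherited from Conlon--Janzer--Lee, Jiang--Qiu, and Janzer). The two key lemmas (Lemmas~\ref{lem:heavy-paths} and~\ref{lem:heavy-spiders}) bound the number of heavy paths and heavy sub-spiders in a $t*S^s_{b,k}$-free graph; once those are in hand, a direct count shows that most copies of $S^s_{b,k}$ are light, and then averaging over leaf vectors produces $f(h,L)$ light spiders sharing a leaf vector, contradicting the definition of light. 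Averaging in the style of dependent random choice does appear, but only locally (Lemma~\ref{lem:common-degree}, inside Lemmas~\ref{lem:long} and~\ref{lem:additional-spiders}), as a tool for assembling a forbidden configuration out of heavy paths --- not as the organizing principle of the proof. In particular, the paper never needs the global claim you posit, that a positive fraction of root images carries $\Omega\bigl((e(G)/n^{2-1/\rho})^{e(T)}\bigr)$ extensions with controlled overlaps for every sub-embedding; it sidesteps that statement entirely by working with the light/heavy dichotomy. So your outline is a reasonable and well-informed description of one line of attack on the general conjecture, but it is not the argument used here, and as you note it does not constitute a proof.
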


Besides being interesting on its own, a significance of Conjecture \ref{conj:BC} is that it implies the rational exponent conjecture. Indeed, for each rational $r\in (1,2)$, Bukh and Conlon were able to construct a balanced rooted tree $(T,R)$ with density $\rho=\frac{1}{2-r}$. Hence Theorem \ref{BC-theorem} and Conjecture \ref{conj:BC} together would give $\ex(n,t*T_R)=\Theta(n^r)$ for some sufficiently large positive integer $t$. A careful reader will note that Bukh and Conlon's conjecture is in fact much stronger than the rational exponent conjecture. Indeed, to prove the rational exponent conjecture, it suffices to search, for each $r\in(1,2)$, a balanced rooted tree $(T,R)$ with density $\rho=\frac{1}{2-r}$ for which the Bukh-Conlon conjecture holds.  This suggests that one way to make further progress on the rational exponent conjecture is to find suitable balanced rooted trees to explore Conjecture \ref{conj:BC} with. One family of trees whose exploration has brought some success  are the so-called spiders.

\begin{Definition}
	Let $s\geq 2$ be an integer. An $s$-legged spider $S$ with center $u$ is a tree consisting of $s$ paths (called the legs of $S$) that share one common end $u$ but are vertex-disjoint outside $u$. Moreover, we say $S$ has length vector $(j_1,\ldots,j_s)$ and leaf vector $(x_1,\ldots,x_s)$ if for every $1\le i\le s$, its $i$-th leg has length $j_i$ and has ends $u$ and $x_i$. 
\end{Definition}

For spiders with roots being all of its leaves, checking balancedness is simple.
\begin{prop} Let $s, k$ be integers where $s\geq 2, k\geq 1$. Let $S$ be an $s$-legged spider and $R$
the set of its leaves, Suppose the longest leg of $S$ has length $k$. Then
$(S,R)$ is a balanced rooted tree if and only if $e(S)\geq (s-1)k$.
\end{prop}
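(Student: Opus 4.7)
The plan is to translate balancedness into a single scalar inequality and then verify it by a leg-wise case analysis on an arbitrary nonempty subset.

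First, I compute $\rho_T$ explicitly. The set $V(S)\setminus R$ consists of the center $u$ together with the internal (non-leaf) vertices of the $s$ legs, so $|V(S)\setminus R| = 1 + \sum_i (j_i-1) = 1 + e(S) - s$. Every edge of $S$ has at least one endpoint outside the leaves $R$, so $e(V(S)\setminus R) = e(S)$, giving
\[
\rho_T \;=\; \frac{e(S)}{1 + e(S) - s}.
\]
A direct rearrangement shows that the target condition $e(S) \ge (s-1)k$ is equivalent to $\rho_T \le k/(k-1)$ (the case $k=1$, where $V(S)\setminus R = \{u\}$, is trivial). For the ``only if'' direction I would test balancedness against the set $A$ consisting of the $k-1$ internal vertices of a longest leg; all $k$ edges of that leg are incident to $A$, so $\rho_T(A) = k/(k-1)$, and balancedness immediately forces $\rho_T \le k/(k-1)$.

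For the ``if'' direction, fix a nonempty $A \subseteq V(S)\setminus R$, set $a_i := |A\cap (V(\text{leg } i)\setminus\{u,x_i\})|$ for each $i$, and let $\epsilon := \mathbf{1}[u\in A]$. Let $e_i(A)$ denote the number of edges of leg $i$ incident to $A$. A short ``runs'' calculation (noting that $a_i$ internal vertices of a leg arranged in $c$ maximal consecutive runs contribute $a_i+c$ incident edges, with an additional edge through $u$ if $\epsilon=1$ and the vertex adjacent to $u$ on that leg is not in $A$) yields the key per-leg bound
\[
e_i(A) \;\ge\; a_i + 1 \qquad \text{whenever } \epsilon = 1 \text{ or } a_i \ge 1.
\]
With this bound in hand, the argument splits cleanly. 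If $\epsilon=1$, summing over all $s$ legs gives $e(A) \ge |A| + (s-1)$; since $|A| \le 1 + e(S) - s$, we obtain $\rho_T(A) \ge 1 + (s-1)/(1+e(S)-s) = \rho_T$, and this case does not use the hypothesis. If $\epsilon=0$, set $t := |\{i : a_i \ge 1\}| \ge 1$; summing the bound over those legs gives $e(A) \ge |A| + t$, and since each $a_i \le j_i - 1 \le k-1$ we have $|A| \le t(k-1)$, so $\rho_T(A) \ge 1 + 1/(k-1) = k/(k-1)$, which is at least $\rho_T$ by the hypothesis. Together these establish $\rho_T(A) \ge \rho_T$ for every nonempty $A$.

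The only mildly delicate step is the per-leg bound $e_i(A) \ge a_i + 1$: it has to simultaneously handle $a_i=0$ (where the contribution comes entirely from the edge at $u$), $a_i = j_i - 1$ (where the terminal edge to the leaf is automatically included), and the interaction between $u$ and the run of $A$ nearest $u$. Once that bookkeeping is pinned down, the rest is a one-line algebraic identity, and it is worth noting in the write-up that the $\epsilon=1$ case is automatic while the $\epsilon=0$ case is precisely where the hypothesis $e(S)\ge (s-1)k$ is used (and is tight at the single-leg test set).
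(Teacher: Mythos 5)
The paper states this proposition without proof, so there is nothing internal to compare against; your argument stands or falls on its own, and it is correct. The reduction of $e(S)\ge(s-1)k$ to $\rho_T\le k/(k-1)$, the tight test set on the longest leg for the ``only if'' direction, and the leg-wise run-counting bound $e_i(A)\ge a_i+1$ (valid whenever $u\in A$ or $a_i\ge 1$, since the leaf endpoint $x_i$ never lies in $A$) all check out. The case split is clean: when $u\in A$ you get $e(A)\ge|A|+(s-1)$, and since $|A|\le 1+e(S)-s$ this gives $\rho_T(A)\ge\rho_T$ unconditionally; when $u\notin A$ you get $e(A)\ge|A|+t$ with $|A|\le t(k-1)$, so $\rho_T(A)\ge k/(k-1)\ge\rho_T$ using the hypothesis (and this branch is vacuous when $k=1$, since then $A$ must contain $u$). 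One small presentational suggestion: your inline description of the run count conflates runs among internal vertices with runs in $\{u\}\cup\{\text{internals}\}$; in the final write-up it is cleaner to count runs of $A\cap(\{u\}\cup\{w_1,\dots,w_{j_i-1}\})$ directly, noting that exactly one run can touch the endpoint $u$ and none touches $x_i$, which yields $e_i(A)=a_i+c$ in one step.
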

When $S$ is an $s$-legged spider with length vector $(k,\ldots,k)$ and $R$ is the set of its leaves,
$t*S_R$ is the subdivision $K_{s,t}^k$ of $K_{s,t}$, considered by Janzer \cite{Janzer2}. 
When $S$ is an $(s+1)$-legged spider with length vector $(1,k,\ldots, k)$ 
and $R$ is the set of its leaves, $t*S_R$ is the graph $L_{s,t}(k)$, considered by Conlon, Janzer, and Lee \cite{CJL}.
Motivated by the earlier mentioned results on $\ex(n,L_{s,t}(k))$  and $\ex(n,K_{s,t}^k)$,
Janzer \cite{Janzer2} made the following conjecture.

\begin{conj} [{\rm \cite{Janzer2}}] \label{conj:Janzer}
Let $s\geq 2, k,b,t\geq 1$ be integers. Let $S$ be an $s$-legged spider where the longest leg has length $k$. Suppose that 
$e(S)=(s-1)k+b$, where $0\leq b \leq k$. Then $\ex(n, t*S)= O(n^{1+\frac{s-1}{(s-1)k+b}})$.
\end{conj}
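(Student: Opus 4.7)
The plan is to adapt the path-counting and convexity approach that Janzer \cite{Janzer2} and Conlon-Janzer-Lee \cite{CJL} developed for the uniform cases $K_{s,t}^k$ and $L_{s,t}(k)$, both of which are special spiders covered by the conjecture. The key observation is that $t*S$ is precisely a subdivision of $K_{s,t}$ in which the edge joining the $j$-th vertex on the size-$t$ side to the $i$-th vertex on the size-$s$ side is replaced by an internally-disjoint path of length equal to the $i$-th leg $j_i$ of $S$ (a length depending only on $i$, not on $j$). Setting $\alpha=\frac{s-1}{(s-1)k+b}$, I would assume for contradiction that $G$ is an $n$-vertex graph with $e(G)\ge C\,n^{1+\alpha}$ for a sufficiently large constant $C=C(s,k,t)$ and containing no copy of $t*S$.

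First, apply a standard degree-regularization in the style of \cite{CJL,Janzer2} to pass to a subgraph of $G$ in which every vertex has degree between $d$ and $Kd$ for $d\asymp n^{\alpha}$. Then, for each ordered $s$-tuple of distinct vertices $(x_1,\ldots,x_s)$, let $N(x_1,\ldots,x_s)$ denote the number of choices of a center $u$ together with a system of internally vertex-disjoint paths of prescribed lengths $j_1,\ldots,j_s$ joining $u$ to $x_1,\ldots,x_s$ respectively. A Cauchy-Schwarz / Hölder moment argument applied to the walk-counts $P_{j_i}(u,\cdot)$, iterated leg-by-leg in order of increasing length, should deliver $\sum_{(x_1,\ldots,x_s)} N(x_1,\ldots,x_s)\ \gtrsim\ n\cdot d^{e(S)}$. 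Since there are at most $n^s$ leaf-tuples, pigeonhole produces a single tuple $(x_1,\ldots,x_s)$ realized by at least $d^{e(S)}/n^{s-1}=C^{e(S)}$ distinct centers; the exponent $\alpha$ has been chosen precisely so that this threshold exceeds $t$ once $C$ is large. A sunflower-style greedy extraction then selects $t$ of these centers whose leg-systems are pairwise internally disjoint, the small number of "colliding" systems being controlled by a Kővári-Sós-Turán-type bound on dense bipartite subgraphs of the regularized host, and this produces the forbidden copy of $t*S$.

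The principal obstacle is the iterative convexity step. When all legs are equal, symmetry makes the moment argument clean, but with legs of different lengths $j_1\le\cdots\le j_s$ one must repeatedly apply Hölder while excluding the "atypical" vertices for which $P_{j_i}(u,\cdot)$ deviates badly from its mean; each exclusion is paid for by bounding the number of copies of short cycles and dense bipartite configurations in the regularized host. The hypothesis $0\le b\le k$, which (by the balancedness criterion preceding the conjecture) is exactly the assertion that the rooted spider $(S,R)$ is balanced, is what makes the recursive peel-off close: after removing the current leg, one is left with a strictly smaller balanced rooted subtree, keeping the target density bound intact. Making this recursion uniform across all leg-length vectors is the key technical difficulty, and it is precisely here that Theorem \ref{thm:main} of the present paper stops short of the full conjecture.
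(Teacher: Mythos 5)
The statement you are attempting is a \emph{conjecture}; the paper does not prove it. What the paper actually establishes is Theorem~\ref{thm:subdivision-bound}, which treats only the special case of the spider $S^s_{b,k}$ with length vector $(b,k,\ldots,k)$, i.e.\ exactly one leg of length $b$ and the remaining $s-1$ legs all of the maximal length $k$. General spiders with a mixed length vector satisfying $e(S)=(s-1)k+b$ are not covered, and the authors explicitly record (in the discussion after Conjecture~\ref{conj:Janzer}) that they are settling only the case needed to obtain the Tur\'an exponents. Your closing paragraph recognizes this, but the body of your sketch is presented as if it handled the full conjecture; it does not, and in fact it has a hole even in the special case.

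The hole is in the step from the pigeonhole bound to the ``sunflower-style greedy extraction.'' After regularization you have $d\asymp C'n^{\alpha}$ with $\alpha=\frac{s-1}{(s-1)k+b}$, and $\alpha\cdot e(S)=s-1$, so the average value of $N(x_1,\ldots,x_s)$ over the at most $n^{s}$ ordered leaf tuples is on the order of $n d^{e(S)}/n^{s}=(C')^{e(S)}$ --- a \emph{constant}, not a growing function of $n$. Having a constant number $>t$ of spider copies sharing a leaf tuple gives no control on how these copies intersect on their internal vertices: all of them could, for instance, pass through a single common internal vertex or a common short subpath, in which case no two of them are internally disjoint and the greedy extraction fails outright. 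A K\H{o}v\'ari--S\'os--Tur\'an type bound does not rescue this, because the objects colliding are not edges of a bipartite incidence graph with bounded degree but full length-$j_i$ paths that can collide arbitrarily often in a $K$-almost-regular host.

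This difficulty is precisely what the paper's ``admissible/light/heavy'' machinery (Definitions~\ref{def:admissible-paths}--\ref{def:admissible-spiders}) is built to overcome, and the global structure of the paper's argument is genuinely different from yours. The paper never tries to extract $t$ disjoint copies of $S^s_{b,k}$ directly from a crowded leaf tuple. Instead, working in the $t*S^s_{b,k}$-free, $K$-almost-regular setting (Theorem~\ref{thm:regular}): (i) it proves upper bounds on the number of heavy paths (Lemma~\ref{lem:heavy-paths}) and heavy spiders (Lemma~\ref{lem:heavy-spiders}), and the construction of a forbidden $t*S^s_{b,k}$ takes place \emph{inside those lemmas} starting from a putative large family of heavy paths/spiders, using Lemmas~\ref{lem:independent-paths} and \ref{lem:independent-spiders} to pull out many internally disjoint admissible pieces; (ii) it then counts all spiders, discards those containing a heavy subpath or heavy sub-spider (few, by the lemmas), and pigeonholes to find a leaf tuple supporting at least $f(h,L)$ \emph{light} spiders; (iii) this is a definitional contradiction --- so many light spiders on one leaf tuple would make them heavy. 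Your moment/H\"older sketch aims at an immediate extraction; the paper's proof defers the extraction to the heavy-path and heavy-spider lemmas, where the family size $f(j,L)$ is astronomically larger than $t$ and the admissibility recursion guarantees the disjointness Lemma~\ref{lem:independent-paths}/\ref{lem:independent-spiders} needs. Without this dichotomy, or some replacement for it, the sunflower step in your proposal is unjustified, and that is the genuine gap.
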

Even though Janzer's conjecture is a special case of the Bukh-Conlon conjecture, it is also interesting on its own due to
its connection to the study of subdivisions.
Let $S$ be as specified in Conjecture \ref{conj:Janzer}.
It follows from Theorem \ref{BC-theorem} that there exists a $t_0$ such that for all $t\geq t_0$, 
$\ex(n,t*S)=\Omega(n^{1+\frac{s-1}{(s-1)k+b}})$. Hence, if Conjecture \ref{conj:Janzer} is true, it will establish all rationals of the
form $1+\frac{p}{pk+b}$ as Tur\'an exponents, where $p,k$ are positive integers and $b$ is an integer with $0\leq b \leq k$.
Here, we settle an important case of Conjecture \ref{conj:Janzer} that allows us to obtain all the Tur\'an exponents
that Conjecture \ref{conj:Janzer} would give.

\begin{Definition}
	For positive integers $k,b$ and $s$, let $S^s_{b,k}$ denote the $s$-legged spider with length vector $(b,k,\ldots,k)$.
\end{Definition}

Using this notation, we have $K_{s,t}^k=t*S^s_{k,k}$ and $L_{s,t}(k)=t*S^{s+1}_{1,k}$.
In this paper, we will prove the following common generalization of the
 result of Conlon, Janzer, and Lee on $\ex(n, L_{s,t}(k))$ and the result of Janzer on $\ex(n,K_{s,t}^k)$,
 from which our main theorem, Theorem \ref{thm:main}, follows.

\begin{thm}\label{thm:subdivision-bound}
	For any $s,t\ge 2$ and $k\ge b\ge 1$, $\ex(n,t*S^s_{b,k})= O(n^{1+\frac{s-1}{(s-1)k+b}})$.
\end{thm}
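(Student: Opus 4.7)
The plan is to follow the supersaturation framework from Conlon--Janzer--Lee~\cite{CJL} and Janzer~\cite{Janzer2}, unifying their arguments to accommodate one short leg of length $b$ together with $s-1$ long legs of length $k$. Let $G$ be an $n$-vertex graph with $m\ge Cn^{1+\frac{s-1}{(s-1)k+b}}$ edges, where $C=C(s,t,k,b)$ is a large constant to be chosen. Applying the standard bipartite-then-regularize reduction, I would first pass to an almost-regular bipartite subgraph $G'$ in which every vertex has degree $\Theta(d)$ with $d=\Theta(m/n)$, losing only a constant factor in the edge count; after this step it suffices to locate $t*S^s_{b,k}$ in $G'$.

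The counting core is to produce many \emph{spider configurations} in $G'$: tuples $(u;P_1,\dots,P_{s-1},Q)$ where $u$ is a center, each $P_i$ is a length-$k$ path from $u$, $Q$ is a length-$b$ path from $u$, all internally vertex-disjoint, and with distinct endpoints $x_1,\dots,x_{s-1},y$. Since every degree is $\Omega(d)$, the number of walk-tuples of the prescribed lengths rooted at any fixed $u$ is $\Omega(d^{(s-1)k+b})$. The main technical step is a \emph{mixed-length walk-to-path lemma} that generalises the one in~\cite{Janzer2}: in an almost-regular graph, only a negligible fraction of these walk-tuples fail to yield a genuine spider (because of a repeated vertex within some walk, an intersection between two walks, or a coincidence of endpoints). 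Summing over $u$ then produces $\Omega(n\,d^{(s-1)k+b})$ spider configurations in $G'$.

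The finish is a convexity argument. Averaging over the at most $n^s$ choices of leaf $s$-tuple, some tuple admits at least $\Omega\!\left(n\,d^{(s-1)k+b}/n^s\right)=\Omega\!\left(C^{(s-1)k+b}\right)$ distinct centers. Choosing $C$ sufficiently large in terms of $t$ makes this quantity exceed any constant multiple of $t$, and a short greedy extraction then selects $t$ of these centers whose spiders are pairwise vertex-disjoint outside the shared leaves, realising the desired copy of $t*S^s_{b,k}$.

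The main obstacle is the mixed-length walk-to-path lemma. Janzer's inductive codegree estimate~\cite{Janzer2} controls collisions between two walks of the same length $k$; here one must in addition bound, for every pair $(i,j)\in[b]\times[k]$, the number of walk-pairs of lengths $b$ and $k$ whose $i$-th and $j$-th vertices coincide, and similarly for any pair involving $Q$. The bookkeeping is more delicate than in the uniform case because the two walks have different lengths, so the degree factors must be redistributed asymmetrically to keep every error term dominated by the main term $d^{(s-1)k+b}$. Extra care will be required in the boundary case $b=1$, where the short leg has no internal vertex to avoid, mirroring the separate treatment of $L_{s,t}(k)$ in~\cite{CJL}.
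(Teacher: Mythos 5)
Your proposal reproduces the easy half of the paper's argument (regularize, count $\Omega(n\,d^{(s-1)k+b})$ spiders, average over leaf $s$-tuples to get $\Omega(C^{(s-1)k+b})$ spiders sharing one leaf vector) but the final step --- ``a short greedy extraction then selects $t$ of these centers whose spiders are pairwise vertex-disjoint outside the shared leaves'' --- is exactly where the argument breaks, and it is the gap the entire light/heavy machinery of the paper exists to fill. Having a large \emph{constant} number of spiders with a common leaf vector does not permit a greedy extraction of $t$ internally disjoint ones: each chosen spider has $O(sk)$ internal vertices, and a single internal vertex may lie on almost all of the spiders in your family (e.g.\ every length-$k$ path from the leaf $x_1$ could be forced through one bottleneck vertex), so the greedy process can stall after one step. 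To make disjointification work one needs an a priori \emph{constant} upper bound on the number of spiders in the family passing through any fixed internal vertex; this is precisely what the paper's recursive notions of admissible/light/heavy paths and spiders provide (Lemma \ref{lem:independent-spiders} gives the extraction, but only for \emph{admissible} spiders, whose every proper sub-configuration is light and hence has bounded multiplicity).

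Consequently the true difficulty is not the ``mixed-length walk-to-path lemma'' you flag (converting walks to paths is routine and is absorbed into the greedy count $|\cS|\ge\frac{1-o(1)}{(h+1)!}n\delta^h$), but rather showing that in a $t*S^s_{b,k}$-free almost-regular graph the number of \emph{heavy} paths of each length $2\le j\le k$ and of heavy sub-spiders is $O(n\delta^j/L)$, so that after discarding all spiders containing a heavy sub-configuration one still retains $\Omega(n\delta^h)$ spiders, now all light, and the averaging step then contradicts the definition of lightness directly. Bounding the heavy paths (Lemmas \ref{lem:heavy-paths}, \ref{lem:long}, \ref{lem:short}) is the paper's main contribution --- in particular the short-heavy-path case $2\le j\le\frac{k+b}{2}$ requires the cleaning lemmas and the construction of a well-placed height-$b$ spider plus $s-1$ height-$k$ spiders out of an overabundance of heavy $j$-paths --- and none of this is present, even in outline, in your proposal.
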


As in \cite{CL, CJL, JQ, Janzer2}, we will use the following variant of the regularization lemma of Erd\H{o}s and Simonovits
\cite{cube},  as given in \cite{JS}. Given a positive constant $K$, a graph $G$ is {\it $K$-almost-regular} if
$\Delta(G)\leq K\delta(G)$.

\begin{lem} {\rm \cite{JS}} \label{lem: almost-regular}
	Let $0<\epsilon<1$ and $c\ge1$. There exists $n_0=n_0(\epsilon)>0$ such that the following holds for all $n\ge n_0$. If $G$ is a graph on $n$ vertices with $e(G)\ge cn^{1+\epsilon}$, then $G$ contains a $K$-almost-regular subgraph $G'$ on $m\ge n^{\frac{\epsilon-\epsilon^2}{2+2\epsilon}}$ vertices such that $e(G')\ge\frac{2c}5m^{1+\epsilon}$ and $K=\lceil20\cdot2^{\frac1{\epsilon^2}+1}\rceil.$
\end{lem}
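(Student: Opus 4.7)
The plan is to prove the regularization lemma by an iterative dyadic pruning procedure. Starting from $G_0 = G$, I maintain a sequence of subgraphs $G_0 \supseteq G_1 \supseteq \cdots$, each with a tracked edge lower bound $e(G_i) \geq c_i n_i^{1+\epsilon}$, where $n_i = |V(G_i)|$ and the constants $c_i$ are tracked explicitly through the iteration. At each stage I check whether $G_i$ is already $K$-almost-regular; if so, I stop, otherwise I pass to a smaller and more degree-balanced $G_{i+1}$.

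The per-step refinement has two substeps. First, I remove every vertex whose degree is below a fixed small fraction (say $1/5$) of the current average degree $\bar{d}_i = 2e(G_i)/n_i$; these vertices contribute at most a bounded fraction of the edges, so the pruned graph retains a constant fraction of $e(G_i)$ and has minimum degree at least $\bar{d}_i/5$. Second, I partition the surviving vertices into dyadic degree classes $V_\ell = \{v : 2^\ell(\bar{d}_i/5) \le \deg_{G_i}(v) < 2^{\ell+1}(\bar{d}_i/5)\}$; since $\Delta(G_i) \le n_i$, there are only $O(\log n_i)$ such classes, and by weighted pigeonhole on $\sum_\ell 2^\ell |V_\ell| \ge e(G_i)$ one class $V_{\ell^\star}$ is incident to an $\Omega(1/\log n_i)$ fraction of the edges. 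Setting $G_{i+1} := G_i[V_{\ell^\star}]$ gives a subgraph in which every vertex has $G_i$-degree within a factor of two of every other vertex of $G_{i+1}$; after a further round of low-degree pruning, the max-to-min degree ratio of $G_{i+1}$ drops by a constant factor compared to that of $G_i$.

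After at most $O(1/\epsilon^2)$ such iterations the cumulative restriction brings the degree ratio of the surviving subgraph below $K = \lceil 20 \cdot 2^{1/\epsilon^2+1}\rceil$, and the procedure terminates with a $K$-almost-regular subgraph $G'$. The main obstacle is the quantitative bookkeeping. At each iteration the dyadic pigeonhole costs a factor of $\Theta(\log n_i)$ in density and a factor of $\Theta(\log n_i)$ in vertex count, and these compounded losses must be controlled precisely so that the final edge bound is at least $(2c/5)m^{1+\epsilon}$ and the final vertex count satisfies $m \ge n^{(\epsilon - \epsilon^2)/(2+2\epsilon)}$. The specific exponent $(\epsilon-\epsilon^2)/(2+2\epsilon)$ is forced by balancing the per-iteration vertex loss against the $(1+\epsilon)$-power density scaling: if $n_{i+1} \ge n_i^{1-\epsilon/(1+\epsilon)}/\log n_i$ while $c_{i+1} \ge c_i/C$ for a universal $C$, then terminating when $n_i$ first falls below the stated threshold preserves both the density lower bound $2c/5$ and the almost-regularity constant $20 \cdot 2^{1/\epsilon^2+1}$. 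Calibrating the constants in the low-degree pruning threshold (the $1/5$ above) and the per-step budget so that these arithmetic bounds close is the technical heart of the argument.
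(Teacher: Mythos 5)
A preliminary remark: the paper does not prove this lemma at all --- it is quoted verbatim from Jiang--Seiver \cite{JS} (a variant of the Erd\H{o}s--Simonovits regularization lemma \cite{cube}), so your proposal must be judged against the known proof in the literature rather than against anything in this paper. Your general strategy (iterative refinement toward an almost-regular subgraph) is the right family of argument, but as written it has three genuine gaps. First, the dyadic pigeonhole step does not do what you claim: knowing that a degree class $V_{\ell^\star}$ is \emph{incident} to a $\Omega(1/\log n_i)$ fraction of the edges says nothing about the \emph{induced} subgraph $G_i[V_{\ell^\star}]$, whose edge set could be empty (all such edges may leave the class). Second, even if $G_i[V_{\ell^\star}]$ were dense, the property that all vertices of $V_{\ell^\star}$ have $G_i$-degree within a factor of $2$ of one another gives no control on their degrees \emph{in the induced subgraph}, so the claim that the max-to-min degree ratio drops by a constant factor per step is unsupported, and no mechanism is given that forces termination after $O(1/\epsilon^2)$ iterations.

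Third, and most decisively, the bookkeeping cannot close in the form you describe: if each of $\Theta(1/\epsilon^2)$ iterations costs a universal constant factor $C>1$ in the density constant (plus $\log$ factors), the final graph satisfies only $e(G')\ge (c/C^{\Theta(1/\epsilon^2)})\,m^{1+\epsilon}$, which is far weaker than the required $e(G')\ge \frac{2c}{5}m^{1+\epsilon}$. The actual Erd\H{o}s--Simonovits/Jiang--Seiver argument avoids exactly this: one measures density against a slightly different exponent (so that whenever the procedure recurses into the high-degree part, the density parameter at least \emph{doubles} rather than shrinks), and the number of doublings is bounded because density cannot exceed a power of the vertex count --- this is simultaneously where the constant $K=\lceil 20\cdot 2^{1/\epsilon^2+1}\rceil$, the final density $\frac{2c}{5}$, and the vertex-count threshold $n^{(\epsilon-\epsilon^2)/(2+2\epsilon)}$ come from. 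You explicitly defer this calibration as ``the technical heart of the argument,'' but that calibration \emph{is} the proof; without it the proposal is an outline of a strategy that, in its stated form, does not yield the stated constants.
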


By Lemma \ref{lem: almost-regular}, in order to prove Theorem \ref{thm:subdivision-bound}, it suffices to prove the following.

\begin{thm}\label{thm:regular}
	Let $s,t\ge 2$ and $k\ge b\ge 1$. Let $K=K(s,b,k)$ be obtained by Lemma \ref{lem: almost-regular} with $\epsilon:=\frac{s-1}{(s-1)k+b}$.  There exist positive constants $n_0$ and $C$ depending only on $s,t,b,k$ such that for all integers $n\geq n_0$ if $G$
is an $n$-vertex  $t*S^s_{b,k}$-free $K$-almost-regular graph then $\delta(G)< C n^\frac{s-1}{(s-1)k+b}$.
\end{thm}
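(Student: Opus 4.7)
The plan is to prove Theorem~\ref{thm:regular} by contradiction, extending the counting strategy of Janzer~\cite{Janzer2} for $K_{s,t}^k=t*S^s_{k,k}$ and of Conlon, Janzer, and Lee~\cite{CJL} for $L_{s,t}(k)=t*S^{s+1}_{1,k}$ to the mixed leg lengths of $S^s_{b,k}$. Suppose for contradiction that $G$ is $K$-almost-regular and $t*S^s_{b,k}$-free with $d:=\delta(G)\ge Cn^{\epsilon}$, where $\epsilon=\frac{s-1}{(s-1)k+b}$ and $C=C(s,t,b,k)$ is a large constant to be chosen. Almost-regularity gives $\Delta(G)\le Kd$, so the number of walks of length $\ell$ from any vertex is between $d^\ell$ and $(Kd)^\ell$.

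First I would introduce a notion of \emph{good walk} of length $\ell$, designed so that from any vertex $u$ there are $\Omega(d^\ell)$ good $\ell$-walks for $\ell\in\{b,k\}$, and so that good walks are ``path-like'' in the sense that only a negligible fraction of $s$-tuples of good walks emanating from a common vertex fail to be pairwise internally disjoint. Such a notion can be adapted from the defective-walk lemmas of~\cite{Janzer2} and their refinements in~\cite{CJL}; the novelty here is to run the machinery for the two different lengths $b$ and $k$ simultaneously so that bad incidences remain subdominant in both settings. A convexity argument then ensures that for each $\ell\in\{b,k\}$ many pairs $(u,x)$ are joined by $\Omega(d^\ell/n)$ good $\ell$-walks.

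Next, for each candidate center $u$, I would count ordered leaf-tuples $(x_1,\dots,x_s)$ equipped with pairwise internally disjoint good walks: one of length $b$ from $u$ to $x_1$ and one of length $k$ from $u$ to $x_i$ for $i\ge 2$. Multiplying and subtracting the negligible non-disjoint contributions from the good-walk count, the total over all $u$ is at least $\Omega\!\left(n\cdot d^{(s-1)k+b}\right)$; averaging over the $n^s$ possible leaf-tuples gives some fixed $(x_1^*,\dots,x_s^*)$ served as leaves by $\Omega\!\left(d^{(s-1)k+b}/n^{s-1}\right)$ valid centers, which exceeds any prescribed multiple of $t$ once $C$ is large enough. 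To assemble $t$ copies of $S^s_{b,k}$ that share only these leaves, I would select centers $u_1,\dots,u_t$ greedily and pick their path systems one at a time: at each step only $O(tk)$ internal vertices are forbidden, and each forbidden vertex kills only a subdominant fraction of the available systems by the good-walk property, so a safe extension always exists.

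I expect the main obstacle to be the first two steps: defining the right notion of good walk so that the multiplicative bound $d^b\cdot(d^k)^{s-1}=d^{(s-1)k+b}$ survives all the regularity and disjointness losses when $b$ and $k$ are different. In the previous works of~\cite{Janzer2} and~\cite{CJL} only one leg length appeared (either $k$, or $1$ alongside $k$), and the defective walk arguments were tuned for that setting; here one needs a single good-walk framework that controls walks of length $b$ and walks of length $k$ compatibly, so that when the two counts are combined the dominant term matches the claimed exponent $\frac{s-1}{(s-1)k+b}$ exactly. Once the good-walk machinery is in place, the pigeonhole and greedy-assembly steps should follow the now-standard template.
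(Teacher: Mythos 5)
Your high-level plan — count many spiders, subtract degenerate configurations, average over leaf tuples, then greedily assemble $t$ disjoint copies — is in the same family as the paper's argument, which also reduces everything to averaging over admissible/light spiders and deriving a contradiction. But there is a genuine gap, and you named it yourself: the crucial step is ``defining the right notion of good walk so that the multiplicative bound $d^b\cdot(d^k)^{s-1}$ survives all the regularity and disjointness losses when $b$ and $k$ are different,'' and you offer no construction or lemma that resolves it. That is not a technicality; it is precisely the obstruction identified in Janzer's paper and precisely the paper's main contribution. The prior defective-walk/light-path machinery breaks exactly because $b\neq k$ makes the ``heavy path'' analysis incompatible across leg lengths, and hoping it can be ``adapted'' without a concrete mechanism leaves the proof unfinished at its hardest point.

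What the paper actually does to close this gap is specific and nontrivial, and none of it is visible in your outline. The argument isolates the enemy as $j$-heavy paths for $2\le j\le k$ (Lemma \ref{lem:heavy-paths}) and splits into two regimes. For long heavy paths ($j>\frac{k+b}{2}$) a direct construction works. For short heavy paths ($2\le j\le \frac{k+b}{2}$) the key new idea is Lemma \ref{lem:b-spider}: write $b=qj+b'$, clean the heavy-path family into a structured bipartite object (Lemma \ref{lem:cleaning1}) with guaranteed high local counts, and then concatenate $j$-heavy paths edge-by-edge along a spider in the auxiliary graph to manufacture a genuine height-$b$ spider in $G$ whose leaf set lands in a controlled layer $A_r$. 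Lemma \ref{lem:additional-spiders} then extends from $A_r$ to height $k$ using a parity argument ($p=k+r-2j$ even, achieved by careful choice of $r$ among $b'$, $b'-1$, $j-b'$, $j-b'+1$) and the minimum-degree graph $F$ between consecutive layers. This parity trick, the layered set decomposition $A_0,\ldots,A_j$ anchored at a single vertex $w$, and the concatenation of heavy paths into a height-$b$ spider are all essential and all absent from your proposal. Also note that the paper's final contradiction is not the direct greedy assembly of $t$ copies after averaging, as you propose; it contradicts the definition of a light spider, while the embeddings of $t*S^s_{b,k}$ occur inside the heavy-path and heavy-spider lemmas themselves.
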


The rest of the paper is organized as follows. In Section 2, we introduce some  notation and preliminary lemmas.
In Section 3, we prove  Theorem \ref{thm:regular}, from which Theorems \ref{thm:subdivision-bound} and \ref{thm:main}  follow. In Section 4, we give a sketch of proofs of Corollaries \ref{cor:dense1} and \ref{cor:dense2} and some concluding remarks.


\section{Notation and preliminaries}

Given a positive integer $m$, let $[m]=\{1,\dots, m\}$.
Given a graph $G$ and a vertex $w$, for  each $i\ge1$ let $\Gamma_i(w)$ be the set of vertices $z$ such that there exists a path in $G$ of length $i$ with ends $w$ and $z$. When $i=1$, we often write $N_G(w)$ for $\Gamma_1(w)$. Let $e(G)$ be the number of edges in $G$. We use standard asymptotic notations, i.e., given two positive functions $f(n)$ and $g(n)$, by  $f=o_n(g),f=\omega_n(g),f=\Omega_n(g),f=O_n(g),f=\Theta_n(g)$,  we respectively mean $\lim_{n\rightarrow\infty}f/g=0,\liminf_{n\rightarrow\infty}f/g=\infty,\liminf_{n\rightarrow\infty}f/g>0,\limsup_{n\rightarrow\infty}f/g<\infty,0<\liminf_{n\rightarrow\infty}f/g\le\limsup_{n\rightarrow\infty}f/g<\infty$.  Whenever the context is clear, we drop the subscript $n$.
If $G$ is a graph and $S$ is a set of vertices in it, then we define
\[N^*_G(S)=\bigcap_{x\in S} N_G(x),\]
and call it the {\it common neighborhood} of $S$ in $G$.

For the rest of the paper, we fix integers $s,t\ge 2$ and $k\ge b\ge 1$, and let $K=K(s,b,k)$ be obtained by Lemma \ref{lem: almost-regular} with $\epsilon:=\frac{s-1}{(s-1)k+b}$. 

Below are some key concepts introduced in \cite{CJL}, which we adapt for our setting.

\begin{Definition}\label{def:f-definition} 
	Let $L$ be a positive integer, we define $f(1,L)=L$ and for $j\ge2$,
	\begin{equation*}
	f(j,L):=10j^4 [2K^jL\cdot f(j-1,L)^2]^{s+3}.
	\end{equation*}
\end{Definition}

We will need the following property of the function in various places of the paper.

\begin{prop}\label{prop}
	For every integer $j\geq 2$, $\frac{f(j,L)}{j^2f(j-1,L)^2}\ge\max\{2L^2,f(j-1,L)\}$ holds.
\end{prop}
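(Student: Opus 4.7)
The plan is to prove Proposition 2.1 by direct computation from Definition 2.1. Substituting the recursive definition of $f(j,L)$ into the left-hand side gives
\[
\frac{f(j,L)}{j^2 f(j-1,L)^2} \;=\; \frac{10 j^4 \bigl[2K^j L\cdot f(j-1,L)^2\bigr]^{s+3}}{j^2 f(j-1,L)^2} \;=\; 10 j^2 \bigl[2K^j L\bigr]^{s+3} f(j-1,L)^{2(s+2)}.
\]
I would then verify the two claimed lower bounds separately.

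For the bound $\ge 2L^2$, I would use that $s\ge 2$ (so $s+3\ge 5$), $K\ge 1$, $L\ge 1$, and $j\ge 2$, giving $[2K^j L]^{s+3} \ge (2L)^5 = 32 L^5 \ge 2L^2$ (since $L\ge 1$), while the factors $10j^2$ and $f(j-1,L)^{2(s+2)}$ are both at least $1$. For the bound $\ge f(j-1,L)$, I would rewrite the target inequality as
\[
10 j^2 \bigl[2K^j L\bigr]^{s+3} f(j-1,L)^{2s+3} \;\ge\; 1,
\]
which is immediate since every factor on the left is at least $1$ under our standing assumptions $s\ge 2$, $j\ge 2$, $K,L\ge 1$ (noting $f(j-1,L)\ge L\ge 1$ follows by a trivial induction from $f(1,L)=L$).

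The proposition is therefore purely mechanical; the only obstacle is bookkeeping, namely confirming that $f(j-1,L)\ge 1$ and $L\ge 1$ so that all the exponents of $L$, $K$, and $f(j-1,L)$ can be absorbed safely. No nontrivial step is involved beyond unwinding the recursion once.
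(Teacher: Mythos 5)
Your computation is correct: unwinding Definition 2.1 once gives $\frac{f(j,L)}{j^2 f(j-1,L)^2} = 10j^2\,[2K^jL]^{s+3} f(j-1,L)^{2(s+2)}$, and both bounds follow from the standing assumptions $s\ge 2$, $j\ge 2$, $K\ge 1$, $L\ge 1$ and the trivial induction $f(j-1,L)\ge L\ge 1$. The paper states the proposition without proof, treating it as exactly this kind of mechanical verification, so your argument supplies the intended (omitted) justification and there is nothing to compare beyond that.
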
		
	
The next two definitions are crucial to our overall arguments.

\begin{Definition} \label{def:admissible-paths}
	We recursively define $j$-admissible, $j$-light paths, and $j$-heavy paths in a graph $G$. Any edge is both 1-admissible and 1-light. For $j\ge2$, a path $P$ is $j$-admissible if it has length $j$ and for each $1\leq \ell<j$
	every subpath of length $\ell$ in $P$ is $\ell$-light. 
	
Among $j$-admissible paths $P$ with ends $x$ and $y$, we further say that $P$ is $j$-light if the number of $j$-admissible paths with ends $x$ and $y$ in $G$ is less than $f(j,L)$ and that $P$ is $j$-heavy otherwise.
\end{Definition}
Since the length of a path $P$ is fixed, we often drop the prefix $j$ and $\ell$ in the definitions above.
Note that $j$-admissible and $j$-light paths are defined for all $j\geq 1$ while $j$-heavy paths are defined only for $j\geq 2$.
In \cite{JQ}, the concepts of admissible, light, and heavy paths were extended for spiders. Here, we adapt the definitions from 
\cite{JQ} further.

\begin{Definition} \label{def:admissible-spiders}
	We recursively define $s$-legged admissible, light, and heavy spiders in a graph $G$. Any spider of height 1 is both admissible and light. Let $S$ be an $s$-legged spider with leaf vector $(x_1,\ldots, x_s)$ and length vector $(j_1,\dots, j_s)\neq (1,1,\dots,1)$. We say that $S$ is admissible if every leg of it is a light path as defined in Definition \ref{def:admissible-paths} and 
	every $s$-legged proper sub-spider of $S$ is light. Suppose $S$ is admissible. Then we 
	further say that it is light if
	the number of admissible spiders  in $G$ with leaf vector $(x_1,\ldots,x_s)$ and length vector $(j_1,\ldots,j_s)$ is less than $f(j,L)$ where $j=j_1+\cdots+j_s$. If $S$ is admissible but not light, then we say that it is heavy.
\end{Definition}

At this point, let us say a few words about the function $f(j,L)$ given in Definition \ref{def:f-definition},
as this function plays an important role in our arguments.
In application we always assume that the parameter $L$ is sufficiently larger than $s,t,k$ and $K$
and roughly speaking $f(j,L)$ is chosen so that $f(j,L)\gg f(j-1,L)$, i.e., 
$\frac{f(j,L)}{f(j-1,L)}\rightarrow\infty$ as $L\rightarrow\infty$.

Next, we give several lemmas. Lemma \ref{lem:independent-paths}
is similar to one used in \cite{CJL}. Lemma \ref{lem:independent-spiders} has
its analogous counterparts in \cite{JQ} and \cite{Janzer2}. However, since our terminologies and choices
of constants are slightly different, we include full proofs for completeness.

\begin{lem} \label{lem:admissible}
Let $G$ be a $K$-almost-regular graph. Let $1\leq i\leq j$ be integers. Let $x,w,y$ be vertices in $G$.
Then the number of $j$-admissible paths in $G$ that have $x,w,y$ as the first, $(i+1)$-th and last vertices, respectively
is at most $f(i,L)\cdot f(j-i,L)$. Furthermore, if $i=1$ or $j$, then there are most $f(j-1,L)$ such paths.
\end{lem}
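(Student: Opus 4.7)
The plan is to exploit the layered definition of admissibility. If $P = v_0 v_1 \dots v_j$ is a $j$-admissible path with $v_0 = x$, $v_i = w$, and $v_j = y$, then for $1 \leq i \leq j-1$ both the prefix $P_1 = v_0 \dots v_i$ and the suffix $P_2 = v_i \dots v_j$ have length strictly less than $j$. By the defining condition of $j$-admissibility in Definition \ref{def:admissible-paths}, $P_1$ must be $i$-light and $P_2$ must be $(j-i)$-light.

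Unpacking lightness, this means there are fewer than $f(i,L)$ many $i$-admissible paths in $G$ with endpoints $x, w$, and fewer than $f(j-i,L)$ many $(j-i)$-admissible paths with endpoints $w, y$. Next, I would argue that any $j$-admissible path through $(x, w, y)$ at the prescribed positions decomposes uniquely into a pair $(Q_1, Q_2)$ where $Q_1$ is an $i$-admissible path from $x$ to $w$ and $Q_2$ is a $(j-i)$-admissible path from $w$ to $y$; both pieces are admissible since every length-$\ell$ subpath of a $j$-admissible path is $\ell$-admissible, and the concatenation at $w$ recovers $P$ and hence inverts the decomposition. Multiplying the two counts then yields the bound $f(i,L) \cdot f(j-i,L)$.

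For the "furthermore" assertion, observe that when $i = 1$ the prefix degenerates to the single edge $xw$, contributing exactly one choice, and so combined with the $f(j-1,L)$ bound on the suffix the total becomes $f(j-1,L)$; the opposite boundary case, in which the suffix is a single edge, is symmetric. There is no real obstacle beyond careful bookkeeping against the nested definitions; the only subtle point worth flagging is that $\ell$-lightness is a property of the endpoint pair rather than of a specific path, so a bound deduced from the existence of one light subpath automatically governs every $\ell$-admissible path with the same endpoints.
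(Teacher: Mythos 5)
Your proposal is correct and follows essentially the same decomposition-into-light-prefix-and-light-suffix argument as the paper's proof. One small wrinkle: you read the second boundary case as ``the suffix is a single edge,'' which is $i=j-1$, whereas the lemma literally says $i=j$ --- a value that forces $w=y$ (and also makes the main bound $f(i,L)\cdot f(j-i,L)$ call $f(0,L)$, which is undefined); the paper's own proof just says ``the case $i=j$ is similar'' without elaboration, and the lemma is only ever invoked with $1\le i\le j-1$, so your symmetric $i=j-1$ reading is surely what was intended.
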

\begin{proof}
Let $\cP$ be  the family of $j$-admissible paths in $G$ that have $x,w,y$ as the first, $(i+1)$-th, and last vertices, respectively.
Let $P\in \cP$, by definition, each proper subpath of $P$ is light. So $P$ is the union of $i$-light path from $x$ to $w$ and
a $(j-i)$-light path from $w$ to $y$. By definition of light paths there are at most $f(i,L)$ $i$-light paths in $G$ with
ends $x,w$ and at most $f(j-i,L)\cdot (j-i)$-light paths with ends $w$ and $y$. So $|\cP|\leq f(i,L)\cdot f(j-i,L)$.

If $i=1$ then every $P\in \cP$ is the union of the edge $xw$ and a $(j-1)$-light path with ends $w$ and $y$. So $|\cP|\leq f(j-1,L)$.
The case $i=j$ is similar.
\end{proof}

\begin{lem}\label{lem:independent-paths}
	Let $x,y$ be two vertices and $\cC$ be family of $j$-admissible paths between $x$ and $y$. Then  there are 
	$|\cC|/[{j^2 \cdot f(j-1,L)^2}]$ members of $\cC$ that are pairwise
	vertex disjoint outside $\{x,y\}$.
\end{lem}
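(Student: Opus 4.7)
The plan is a straightforward greedy selection argument. I will iteratively pick a path $P$ from $\cC$, delete $P$ together with every member of $\cC$ that meets $P$ outside $\{x,y\}$, and repeat until $\cC$ is exhausted. The paths selected in this process are automatically pairwise vertex-disjoint outside $\{x,y\}$, so to obtain the claimed lower bound $|\cC|/[j^2 \cdot f(j-1,L)^2]$ it suffices to show that each selected $P$ forces the removal of at most $j^2 \cdot f(j-1,L)^2$ members of $\cC$.

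To carry this out, I would fix $P \in \cC$ and consider any $P' \in \cC$ that shares an internal vertex with $P$. There must exist an internal vertex $w$ of $P$ (of which there are exactly $j-1$) and an index $1 \le i \le j-1$ such that $w$ appears as the $(i+1)$-th vertex of $P'$. For each fixed pair $(w,i)$, Lemma \ref{lem:admissible} bounds the number of $j$-admissible $x$-$y$ paths passing through $w$ at the $(i+1)$-th position by $f(i,L) \cdot f(j-i,L)$. A quick inspection of Definition \ref{def:f-definition} shows that $f(\cdot,L)$ is increasing in its first argument (each $f(j,L)$ dominates $f(j-1,L)$ by a huge factor), so $f(i,L), f(j-i,L) \le f(j-1,L)$ and hence $f(i,L)\cdot f(j-i,L) \le f(j-1,L)^2$ for every $1\le i\le j-1$.

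Summing this bound over the $j-1$ choices of $w$ and the $j-1$ choices of $i$ shows that at most $(j-1)^2 \cdot f(j-1,L)^2 \le j^2 \cdot f(j-1,L)^2$ members of $\cC$ intersect $P$ internally. Therefore the greedy procedure produces at least $|\cC|/[j^2 \cdot f(j-1,L)^2]$ paths that are pairwise disjoint outside $\{x,y\}$, proving the lemma. The hard part should be essentially cosmetic: one just needs the monotonicity of $f$ in its first argument and the routine double-counting over $(w,i)$, both of which are packaged by Lemma \ref{lem:admissible} and Definition \ref{def:f-definition}, so no real obstacle is anticipated.
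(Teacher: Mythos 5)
Your proof is correct and follows essentially the same approach as the paper: the paper takes a maximal pairwise-internally-disjoint subfamily and bounds $|\cC|$ by counting, for each internal vertex $v$ used and each position $i$, the paths through $(v,i)$ via Lemma \ref{lem:admissible}, while you phrase the identical count as a greedy deletion argument. The two are formally equivalent, and your monotonicity observation $f(i,L)\cdot f(j-i,L)\le f(j-1,L)^2$ is exactly what the paper uses implicitly.
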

\begin{proof}
	Let $\cC'=\{Q_1,\ldots,Q_r\}\subseteq \cC$ be a maximal subfamily of $\cC$ that are pairwise vertex disjoint outside $\{x,y\}$. Let $W=\bigcup_{j=1}^r V(Q_i)\setminus\{x,y\}$. Then $|W|=(j-1)r$. By maximality, every member of $\cC$ must contain a vertex $v\in W$ as an internal vertex. For each $v\in W$ and each $1\leq i\leq j-1$ let $\cC_{v,i}$ be the subfamily of members of
	$\cC$ that contains $v$ as its $(i+1)$-th vertex (when the member is viewed from $x$ to $y$). Then $\cC=\bigcup_{v,i} \cC_{v,i}$. By Lemma \ref{lem:admissible},  for any fix $v,i$, we have $|\cC_{v,i}|\leq f(i,L)\cdot f(j-i,L)\le f(j-1,L)^2$. Hence
	\begin{equation*}
|\cC|=|\bigcup_{v,i}\cC_{v,i}|\leq \sum_{v\in W}\sum_{i=1}^{j-1} f(j-1,L)^2 <rj^2 f(j-1,L)^2.
	\end{equation*}
Solving the inequality for $r$, we get the desired claim.
\end{proof}

For two spiders with the same leaf vector and length vector, we say they are \emph{internally disjoint} if they are vertex disjoint outside their leaves.
\begin{lem}\label{lem:independent-spiders}
	Let $\cS$ be a family of admissible spiders with leaf vector $(x_1,\ldots,x_s)$ and length vector $(j_1,\ldots,j_s)$. Then among  there are 
	$|\cS|/[j^2 \cdot f(j-1,L)^2]$
members of $\cS$ that are pairwise vertex disjoint outside $\{x_1,\ldots, x_s\}$, where $j=j_1+\cdots+j_s$.
\end{lem}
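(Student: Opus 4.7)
The plan is to imitate the proof of Lemma \ref{lem:independent-paths}, replacing ``path'' by ``spider'' and exploiting the sub-spider clause of Definition \ref{def:admissible-spiders}. Fix a maximal internally disjoint subfamily $\cS^\circ=\{S_1,\ldots,S_r\}$ of $\cS$ and set $W=\bigcup_{i=1}^r V(S_i)\setminus\{x_1,\ldots,x_s\}$. Each $S_i$ is a tree on $j+1$ vertices with $s$ leaves, so $|W|\le r(j+1-s)$. By maximality, every $S\in\cS$ meets $W$ at some non-leaf vertex $v$. Classify such an occurrence by the pair $(v,p)$, where $p$ ranges over the $j+1-s$ non-leaf positions in an $s$-legged spider with the prescribed length vector (the center, together with the $j_\ell-1$ interior positions on leg $\ell$, $\ell=1,\ldots,s$), and let $\cS_{v,p}$ consist of those $S\in\cS$ placing $v$ at position $p$. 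Then
\[|\cS|\le\sum_{v\in W}\sum_{p}|\cS_{v,p}|\le r(j+1-s)^2\max_{v,p}|\cS_{v,p}|\le rj^2\max_{v,p}|\cS_{v,p}|,\]
so it is enough to show $|\cS_{v,p}|\le f(j-1,L)^2$ for every such pair $(v,p)$.

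If $p$ is the center position, then every $S\in\cS_{v,p}$ has center $v$ and its $\ell$-th leg is one of fewer than $f(j_\ell,L)$ light $j_\ell$-paths from $v$ to $x_\ell$, giving $|\cS_{v,p}|\le\prod_\ell f(j_\ell,L)$. Using the rapid growth $f(m,L)\ge f(m-1,L)^3$ for $m\ge 2$ supplied by Proposition \ref{prop}, one iterates to obtain $f(j-1,L)\ge f(\max_\ell j_\ell,L)^{3^{s-2}}$; since $\max_\ell j_\ell\le j-s+1$ and $s\le 2\cdot 3^{s-2}$ for every $s\ge 2$, this yields $\prod_\ell f(j_\ell,L)\le f(\max_\ell j_\ell,L)^s\le f(j-1,L)^2$, as needed.

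Suppose instead that $p$ lies in the interior of leg $\ell$ at distance $i$ from $x_\ell$ with $0<i<j_\ell$. Form the sub-spider $S^\ast\subseteq S$ by shortening leg $\ell$ so that the leaf at $x_\ell$ is replaced by $v$: it has the same center as $S$, leaf vector $(x_1,\ldots,x_{\ell-1},v,x_{\ell+1},\ldots,x_s)$, and length vector $(j_1,\ldots,j_{\ell-1},j_\ell-i,j_{\ell+1},\ldots,j_s)$ of total length $j-i$. Since $S^\ast$ is a proper $s$-legged sub-spider of the admissible spider $S$, $S^\ast$ is light, and so the number of admissible spiders in $G$ with these leaves and lengths is less than $f(j-i,L)$. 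Because $S$ is recovered from $S^\ast$ by appending the $i$-light subpath from $v$ to $x_\ell$ (of which there are fewer than $f(i,L)$ by Definition \ref{def:admissible-paths}), this yields $|\cS_{v,p}|\le f(i,L)f(j-i,L)\le f(j-1,L)^2$. The principal obstacle is the degenerate sub-case in which $S^\ast$ has length vector $(1,\ldots,1)$, which occurs exactly when $j_{\ell'}=1$ for all $\ell'\ne\ell$ and $i=j_\ell-1$; then $S^\ast$ is a star, automatically light, and the sub-spider clause provides no useful bound. I would resolve this case by applying Lemma \ref{lem:admissible} directly to the full leg $\ell$ of $S$: for each choice of center $c\in N(v)\cap\bigcap_{\ell'\ne\ell}N(x_{\ell'})$, the ``furthermore'' part of that lemma produces at most $f(j_\ell-1,L)$ $j_\ell$-admissible paths from $c$ to $x_\ell$ through $v$ at the position adjacent to $c$, and the $K$-almost-regularity of $G$ bounds the number of such centers by $\Delta(G)\le KL$; the fast growth of $f$ then absorbs the factor of $KL$ into $f(j-1,L)^2$.
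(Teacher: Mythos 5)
Your decomposition is the same as the paper's: take a maximal internally disjoint subfamily, let $W$ be the union of its non-leaf vertices, note every $S\in\cS$ meets $W$ at a non-leaf vertex, and classify by the position within $S$ of that vertex (center, or interior of leg $\ell$ at distance $i$ from $x_\ell$). The center-case bound $\prod_\ell f(j_\ell,L)\le f(j-1,L)^2$ via the iterated-cubing inequality $f(m,L)\ge f(m-1,L)^3$ is correct and is a valid alternative to the paper's telescoping argument, which instead uses $f(a,L)f(b,L)\le f(a+b-1,L)^2\le f(a+b,L)$; the generic interior-leg case is handled exactly as in the paper. You also correctly observe a subtlety that the paper's own proof passes over silently: when the truncated sub-spider $S^\ast$ has length vector $(1,\ldots,1)$, Definition \ref{def:admissible-spiders} decrees it light without attaching a numerical threshold, so ``the sub-spider is light'' supplies no upper bound on its multiplicity.

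The problem is the patch you offer for that degenerate case. You bound the number of possible centers $c\in N(v)\cap\bigcap_{\ell'\ne\ell}N(x_{\ell'})$ by $\Delta(G)\le KL$, but $K$-almost-regularity only gives $\Delta(G)\le K\delta(G)$, and $\delta(G)$ is not a constant --- in the setting where this lemma is invoked it is of order $n^{(s-1)/((s-1)k+b)}$. A factor of $K\delta$ cannot be ``absorbed by the fast growth of $f$'' into the constant $f(j-1,L)^2$, so your argument does not establish $|\cS_{v,p}|\le f(j-1,L)^2$ in the degenerate subcase, and the proof has a genuine hole there. For orientation: in the paper's downstream uses (Lemma \ref{lem:heavy-spiders} via Lemmas \ref{lem:bk-spiders-atmost1} and \ref{lem: (b,k,...,k) spiders}) the hypothesis ``$j_i=1$ for at most one $i$'' makes a height-1 sub-spider impossible whenever $s\ge 3$, which is presumably why the issue goes unremarked; but as the lemma is stated for an arbitrary length vector, a fully rigorous treatment needs either to restrict the length vectors accordingly or to close this case by a different argument rather than the degree bound you propose.
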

\begin{proof}
	Let $\cS'=\{S_1,\ldots,S_r\}\subseteq \cC$ be a maximal subfamily of members of $\cS$ that are pairwise vertex disjoint outside $\{x_1,\ldots, x_s\}$. Let $W=\bigcup_{i=1}^r V(S_i)\setminus \{x_1,\ldots, x_s\}$. Then $|W|=(j-s+1)r$. 
By maximality of $\cS'$, every member of $\cS$ must contain some $v\in W$ as a non-leaf vertex.  
For each $v\in W$ let $\cD_v$ denote the subfamily of members of $\cS$ that contain $v$ as the center.
For each $v\in W$, $i\in [s]$, and $1\leq \ell <j_i$, let $\cS_{v,i,\ell}$ denote the subfamily of members of $\cS$ in which
$v$ is on the $i$-th leg  and the distance from $v$ to $x_i$ is $\ell$.
Then $\cS=(\bigcup_{v\in W} \cD_v) \cup (\bigcup_{v\in W, i\in [s], 1\leq \ell<j_i} \cS_{v,i,\ell})$.

Let $S\in \cD_v$. Then by definition, for each $i\in[s]$, the $i$-th leg of $S$ is a $j_i$-light path between $v$ and $x_i$.
Hence, by the definition of light paths.
\[|\cD_v|\leq \prod_{i=1}^s f(j_i, L)\le f(j-1,L)^2,\]
where the last inequality holds because by Definition \ref{def:f-definition} we have that $\prod_{i=1}^s f(j_i, L)\le f(j_1+j_2-1, L)^2\prod_{i=3}^s f(j_i, L)\le f(j_1+j_2, L)\prod_{i=3}^s f(j_i, L)\le\cdots\le f(j_1+\cdots+j_{s-1},L)f(j_s,L)\le f(j-1,L)^2$.

Next, fix $v\in W$, $i\in [s]$, and $1\leq \ell <j_i$. Let $S\in \cS_{v,i,\ell}$. Since $S$ is admissible,  the $v,x_i$-path in $S$ is $\ell$-light while the rest of $S$ is an $s$-legged proper sub-spider,
which by definition, is light. This implies that
\[|\cS_{v,i,\ell}|\leq f(\ell,L)f(j-\ell, L)\le f(j-1,L)^2.\]

	Putting everything together, we obtain
	\begin{equation*}
	|\cS|\leq |W| f(j-1,L)^2+\sum_{v\in W}\sum_{i=1}^{s}\sum_{\ell=1}^{j_i-1} f(j-1,L)^2,
	\end{equation*}
	which implies that $|\cS|\leq rj^2 f(j-1,L)^2$, from which the claim follows.
\end{proof}

The following lemma is proved in \cite{JQ}. A spider has {\it height} $\ell$ if all of its legs have length $\ell$.

\begin{lem} \label{lem:spider-shrinking2} {\rm (\cite{JQ} Lemma 3.6)}
	Let $G$ be a $K$-almost-egular graph with minimum degree $\delta$.
	Let $x$ be a vertex. Let $\mathcal{C}$ be a family
	of paths of length $h$ with one end $x$  and
	another end in a set $S$. For each $i\in [h]$ there exists a vertex $x_i$ and a  spider of height $i$ with center $x_i$
	and leaves in $S$ which has at least  $|\cC|/[h (K\delta)^{h-1}]$ legs. Furthermore, $x_i=x$ if and only if $i=h$.
\end{lem}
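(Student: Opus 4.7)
The plan is to combine a pigeonhole step, which locates a candidate center, with an iterative greedy step, which peels off a spider of height $i$ from the paths of $\cC$.

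For the pigeonhole step, I would parametrize each $P \in \cC$ as $x = v_0^P, v_1^P, \ldots, v_h^P$ with $v_h^P \in S$ and focus on the vertex $v_{h-i}^P$. For $i < h$ this vertex lies within distance $h-i$ of $x$; since $G$ has maximum degree at most $K\delta$, at most $(K\delta)^{h-i}$ vertices can arise as $v_{h-i}^P$, so by pigeonhole there is a single $x_i \neq x$ realizing $v_{h-i}^P = x_i$ for a subfamily $\cC_i \subseteq \cC$ of size $\geq |\cC|/(K\delta)^{h-i}$. For $i = h$ take $x_h = x$ and $\cC_h = \cC$; this also yields the ``$x_i = x$ iff $i = h$'' dichotomy, since for $i < h$ we have $v_{h-i}^P \neq v_0^P = x$.

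For the greedy step, I would build the spider one leg at a time: pick any $P \in \cC_i$ still available, attach the length-$i$ suffix from $x_i$ to $v_h^P$ as a new leg, and then discard every $P' \in \cC_i$ whose suffix shares a non-$x_i$ vertex with this leg. Since distinct chosen suffixes meet only at $x_i$, the union of the chosen legs is automatically a genuine spider with distinct leaves in $S$. To bound the number of legs produced, I would estimate the number of paths discarded per iteration: for a fixed vertex $w \neq x_i$ on the current suffix, a $P' \in \cC_i$ whose suffix passes through $w$ decomposes into a length-$(h-i)$ prefix $x \to x_i$, a length-$(j-(h-i))$ middle $x_i \to w$, and a length-$(h-j)$ tail $w \to $ anywhere, where $j \in \{h-i+1, \dots, h\}$ is $w$'s position on $P'$. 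Applying the $K\delta$ degree bound segment by segment and summing over the possible positions $j$ yields an $O((K\delta)^{h-1})$ count of paths per vertex, hence an $O(h \cdot (K\delta)^{h-1})$ count of removals per iteration once one sums over the non-$x_i$ vertices of the current leg with a suitable charging scheme. Combining with the pigeonhole loss of $(K\delta)^{h-i}$ delivers the claimed $|\cC|/[h(K\delta)^{h-1}]$ legs.

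The main obstacle will be the careful bookkeeping in the greedy step: a naive count over-charges each removed path by up to a factor of $h$ (once per shared vertex with the current leg), leaving an extra $h$ in the denominator. Saving this factor requires charging each removed path uniquely --- for example, to the earliest position at which its suffix meets the current leg --- so that the total per-iteration removal is $O(h(K\delta)^{h-1})$ rather than $O(h^2(K\delta)^{h-1})$.
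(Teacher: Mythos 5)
The paper does not prove this lemma---it quotes it from \cite{JQ}---so I am judging your argument on its own terms, and it has a genuine quantitative gap that is not just a matter of bookkeeping. The problem is in how the two losses combine for $i<h$. Your pigeonhole gives $|\cC_i|\ge |\cC|/(K\delta)^{h-i}$, and your own estimate of the greedy step removes $O(h(K\delta)^{h-1})$ paths of $\cC_i$ per leg; these losses \emph{multiply}, so you end up with at least $|\cC|/\bigl[h(K\delta)^{2h-i-1}\bigr]$ legs, not $|\cC|/\bigl[h(K\delta)^{h-1}\bigr]$. Even with the sharper per-leg count (prefix $x\to x_i$: $(K\delta)^{h-i-1}$; middle $x_i\to w$: $(K\delta)^{j-(h-i)-1}$; tail: $(K\delta)^{h-j}$, i.e.\ $(K\delta)^{h-2}$ per pair $(w,j)$, hence $i^2(K\delta)^{h-2}$ per leg), you get $|\cC|/\bigl[i^2(K\delta)^{2h-i-2}\bigr]$, which is short of the target by a factor of order $(K\delta)^{h-i-1}=\omega(1)$ whenever $i<h-1$. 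The source of the loss is that you pigeonhole \emph{uniformly} over the up to $(K\delta)^{h-i}$ candidate centers and then pay again for the up to $(K\delta)^{h-i-1}$ prefixes $x\to x_i$ hiding behind each length-$i$ suffix. The standard repair is to pay for these only once: writing $|\cC|\le\sum_u p_{h-i}(x,u)\,r_i(u)$, where $p_{h-i}(x,u)$ is the number of length-$(h-i)$ paths from $x$ to $u$ and $r_i(u)$ the number of \emph{distinct} length-$i$ suffixes starting at $u$, and using $\sum_u p_{h-i}(x,u)\le (K\delta)^{h-i}$, one finds a center $u$ with $r_i(u)\ge |\cC|/(K\delta)^{h-i}$ distinct length-$i$ paths into $S$, and only then runs the disjointness greedy on those suffixes, whose per-leg cost is $O(i^2(K\delta)^{i-1})$. (Alternatively, reduce to the $i=h$ case by repeatedly pigeonholing on the \emph{second} vertex, which costs only a single factor $K\delta$ per unit of length shaved off.)

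A second, smaller issue: your proposed fix for the $h$-versus-$h^2$ factor in the greedy step does not work. Charging each removed path injectively to the earliest vertex at which it meets the current leg does not shrink the upper bound, because that bound is still the sum over all pairs (vertex $w$ of the leg, position $j'$ of $w$ on the removed path) of the number of paths realizing that pair; there are $\Theta(i^2)$ such pairs, each contributing up to $(K\delta)^{i-1}$ (for suffixes), and injectivity of the charging only prevents undercounting legs, not overcounting removals. There is no constraint tying $w$'s position on the leg to its position on the removed path, so the double sum genuinely has $i^2$ terms. This only costs a $\mathrm{poly}(h)$ factor and could be absorbed into the constants of every application in this paper, so it is far less serious than the first gap, but as written your argument does not deliver the stated constant $h(K\delta)^{h-1}$ even in the $i=h$ case.
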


We also need a standard averaging lemma as below.

\begin{lem} \label{lem:common-degree}
Let $0<c<1$ be a real and  $m$ be a positive integer.
Let $G$ be a bipartite graph with a bipartition $(X,Y)$.
Suppose that $e(G)\geq c|X||Y|$  and that $c|Y|\geq 2m$.
Then there exists an $m$-set $S$ in $Y$ such that
$|N^*_G(S)|\geq (c/2)^m |X|$.
\end{lem}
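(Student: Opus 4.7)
The plan is to use a standard double-counting / Jensen convexity argument, along the lines of the Kővári–Sós–Turán proof. First I would count pairs $(x,S)$ with $x\in X$, $S\subseteq Y$ an $m$-set, and $S\subseteq N_G(x)$. The total count equals both $\sum_{x\in X}\binom{d_G(x)}{m}$ and $\sum_{S}|N^*_G(S)|$, where the second sum is over all $m$-subsets $S$ of $Y$.

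Next I would apply convexity of $\binom{\cdot}{m}$ to the first expression. Writing $\bar d=e(G)/|X|$, Jensen's inequality gives
\[
\sum_{x\in X}\binom{d_G(x)}{m}\ge |X|\binom{\bar d}{m}.
\]
Since $e(G)\ge c|X||Y|$ we have $\bar d\ge c|Y|$, and the hypothesis $c|Y|\ge 2m$ forces $\bar d\ge 2m$. By averaging over the at most $\binom{|Y|}{m}$ choices of $S$, there is an $m$-set $S\subseteq Y$ with
\[
|N^*_G(S)|\ge |X|\cdot\frac{\binom{\bar d}{m}}{\binom{|Y|}{m}}.
\]

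It then suffices to lower bound the falling-factorial ratio. Writing it as a product,
\[
\frac{\binom{\bar d}{m}}{\binom{|Y|}{m}}=\prod_{i=0}^{m-1}\frac{\bar d-i}{|Y|-i}\ge \prod_{i=0}^{m-1}\frac{\bar d-i}{|Y|}.
\]
For each $0\le i\le m-1$, since $m\le \bar d/2$, we have $\bar d-i\ge \bar d-m\ge \bar d/2\ge c|Y|/2$, so every factor is at least $c/2$. Hence the product is at least $(c/2)^m$, yielding $|N^*_G(S)|\ge (c/2)^m|X|$ for the chosen $S$.

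I do not expect any genuine obstacle; the only point to be careful about is using the hypothesis $c|Y|\ge 2m$ exactly where it is needed, namely to ensure $\bar d-(m-1)\ge \bar d/2$ so that the crude bound $(\bar d-i)/|Y|\ge c/2$ applies uniformly over $0\le i\le m-1$. If one preferred to avoid Jensen, an identical argument goes through by instead counting ordered $m$-tuples $(y_1,\dots,y_m)$ in $N_G(x)$ with distinct entries, giving the same bound up to a factor absorbed into $(c/2)^m$.
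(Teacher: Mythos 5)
Your proof is correct and follows essentially the same route as the paper: count the stars $K_{1,m}$ centered in $X$ (equivalently the pairs $(x,S)$), use convexity of $\binom{\cdot}{m}$ to get the lower bound $|X|\binom{c|Y|}{m}$, average over $m$-subsets of $Y$, and bound the resulting binomial ratio by $(c/2)^m$ using $c|Y|\ge 2m$. The only cosmetic difference is in the final estimate, where you bound each factor $(\bar d-i)/|Y|$ directly by $c/2$ while the paper bounds $(c|Y|-m)/(|Y|-m)$ by $c/2$; the two calculations are interchangeable.
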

\begin{proof}
By our assumption, the average degree of vertices in $X$ is at least $c|Y|$. Let $\cF$ be the family of 
$K_{1,m}$'s with center in $X$. Then $|\cF|=\sum_{x\in X} \binom{d_G(x)}{m}\geq |X|\binom{c|Y|}{m}$,
where the last inequality uses the convexity of the function $\binom{x}{m}$.
Hence, by averaging there exists an $m$-set $S$ in $Y$ such that the number of members of $\cF$
that have $S$ as the leaf set is at least 
\[|X|\frac{\binom{c|Y|}{m}}{\binom{|Y|}{m}} \ge |X| \left(\frac{c|Y|-m}{|Y|-m}\right )^m>(c/2)^m|X|, \]
where the last inequality uses the condition $c|Y|\geq 2m$.
\end{proof}

Finally, we need a standard cleaning lemma.
\begin{lem} \label{lem:min-degree}
If $B$ is a bipartite graph with parts $X$ and $Y$, then it has subgraph $B'$ such that $e(B')\geq \frac{e(B)}{2}$ and
$\forall x\in X\cap V(B'), d_{B'}(x)\geq \frac{e(B)}{4|X|}$ and $\forall y\in Y\cap V(B'), d_{B'}(y)\geq \frac{e(B)}{4|Y|}$.
\end{lem}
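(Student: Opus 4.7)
The plan is to obtain $B'$ by a standard iterative cleaning procedure. Set $B_0 := B$, and iterate as follows: at step $i \geq 0$, if there exists a vertex $x \in X \cap V(B_i)$ with $d_{B_i}(x) < e(B)/(4|X|)$ or a vertex $y \in Y \cap V(B_i)$ with $d_{B_i}(y) < e(B)/(4|Y|)$, I delete one such offending vertex to form $B_{i+1}$; otherwise I stop and take $B' := B_i$. Because each step strictly decreases the vertex count, the process terminates in at most $|X| + |Y|$ steps, and by the stopping condition the resulting $B'$ automatically satisfies the two claimed minimum-degree bounds.

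What remains is the edge estimate $e(B') \geq e(B)/2$, which I establish by a direct accounting of deleted edges. Every time an $X$-vertex is deleted, strictly fewer than $e(B)/(4|X|)$ edges are destroyed; since there are at most $|X|$ such deletions throughout the process, the total number of edges lost through $X$-side deletions is strictly less than $e(B)/4$. A symmetric argument gives the same bound on the $Y$-side. Summing, strictly fewer than $e(B)/2$ edges disappear in the whole procedure, and hence $e(B') > e(B) - e(B)/2 = e(B)/2$, which is more than what is required.

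The argument contains essentially no difficulty, but one point is worth flagging: a \emph{single} pass that simultaneously removes every vertex of $B$ that is initially below threshold does not formally suffice, because deleting a $Y$-vertex can drop some $X$-vertex's degree below $e(B)/(4|X|)$ and vice versa, so the surviving subgraph need not have the claimed minimum degrees. The iterative version neatly sidesteps this, while keeping the edge-loss tally tight because the per-deletion budgets $e(B)/(4|X|)$ and $e(B)/(4|Y|)$ are charged against the fixed cardinalities $|X|$ and $|Y|$ rather than against any shrinking vertex set.
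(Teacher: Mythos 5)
Your proof is correct and is essentially the paper's own argument: the same iterative vertex-deletion procedure, with the same accounting charging at most $|X|$ deletions of cost below $e(B)/(4|X|)$ on one side and symmetrically on the other, summing to at most $e(B)/2$. The remark that a single simultaneous pass would not suffice, while accurate and worth noting, does not change the fact that the two proofs coincide.
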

\begin{proof} 
Whenever there is a vertex in $X$ whose degree becomes less than $\frac{e(B)}{4|X|}$ or a vertex in $Y$ whose degree becomes less than $\frac{e(B)}{4|Y|}$, we delete it. Let $B'$ denote the final subgraph of $B$. As the number of edges deleted is at most $|X|\cdot \frac{e(B)}{4|X|}+|Y|\cdot \frac{e(B)}{4|Y|}=\frac{e(B)}{2}$, $e(B')\geq \frac{e(B)}{2}$. By definition, $B'$ satisfies our requirements.
\end{proof}

\section{Proof of Theorem \ref{thm:regular}}

\subsection{Overall structure of the proof}

Our overall strategy has roots in  the work of Conlon and Lee \cite{CL} and the work of Conlon, Janzer, and Lee \cite{CJL},
particularly \cite{CJL}. Some of the strategies used there were later augmented (through the concepts of admissible, light, and heavy spiders) in the work of Jiang and Qiu \cite{JQ} and the work of  Janzer \cite{Janzer2}.  In particular, Janzer \cite{Janzer2} introduced a creative way to extending spiders, an idea that we will develop further. Overall, our proof combines ideas from
\cite{CJL}, \cite{JQ},\cite{Janzer2} and some new ideas.

Let $G$ be a $K$-almost-regular $t*S^s_{b,k}$-free graph on $n$ vertices, where $n$ is sufficiently large.
To the prove the theorem, it suffices to show that 
there exists a constant $C$ depending on $s,b,k$ such that if $\delta(G)\geq Cn^{\frac{s-1}{(s-1)k+b}}$, then $G$ must contain a copy of $t*S^s_{b,k}$, which would contradict $G$ being $t*S^s_{b,k}$-free and complete the proof. The general strategy is to show that (1) $G$ contains many copies of $S^s_{b,k}$ and (2) most of these copies of $S^s_{b,k}$ are light.
Then by averaging, there exist some vector $(x_1,\ldots, x_s)$ of $s$ vertices which is the leaf vector of a large number of light copies of $S^s_{b,k}$. This will imply that all these spiders are heavy, giving us contradiction.
More specifically, the proof of Theorem \ref{thm:regular} follows readily after we establish the following two crucial lemmas.

\begin{lem}\label{lem:heavy-paths}
	Let $G$ be a $t*S^s_{b,k}$-free $K$-almost-regular graph on $n$ vertices with minimum degree $\delta=\omega(1)$. Then provided that $L$ is sufficiently large compared to $s,t,k,K$, for any $2\le j\le k$, the number of $j$-heavy paths in $G$ is at most $\frac{(j+1)^{j+1}}{L}n\delta^{j}$.
\end{lem}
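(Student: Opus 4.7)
I would argue by strong induction on $j$, with base case $j=2$, and in each inductive step proceed by contradiction: assume that the number $H$ of $j$-heavy paths in $G$ exceeds $\frac{(j+1)^{j+1}}{L}n\delta^j$, and then exhibit a copy of $t*S^s_{b,k}$ inside $G$, contradicting $t*S^s_{b,k}$-freeness. The inductive hypothesis, applied for every $2 \le \ell < j$, bounds the number of $\ell$-heavy paths by $\frac{(\ell+1)^{\ell+1}}{L}n\delta^\ell$; this ingredient is what lets us select length-$j$ admissible paths whose shorter subpaths are typically $\ell$-light, so that admissibility is preserved throughout the construction.

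The first step is averaging: since each heavy path has two endpoints, some vertex $x$ is incident to more than $\frac{2(j+1)^{j+1}}{L}\delta^j$ heavy paths. Let $D(x)$ be the set of heavy partners of $x$; each pair $(x,y)$ with $y \in D(x)$ has at least $f(j,L)$ $j$-admissible paths between its ends, and by Lemma \ref{lem:independent-paths} at least $r := f(j,L)/[j^2 f(j-1,L)^2]$ of them can be chosen internally disjoint. By Proposition \ref{prop}, $r \ge \max\{f(j-1,L),\, 2L^2\}$, a quantity that is astronomically larger than any fixed polynomial in $s,t,b,k,K$ once $L$ is large.

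The heart of the argument is to convert this richness into a copy of $t*S^s_{b,k}$. I would apply Lemma \ref{lem:spider-shrinking2} to the length-$j$ admissible paths emanating from $x$ in order to compress them into a spider of some chosen height $i \le j$ with center $x'$ and many legs ending in (a sub-family of) $D(x)$. Successive use of Lemma \ref{lem:common-degree} then identifies $s-1$ common leaves shared by many legs, which together with $x$ furnishes the $s$ candidate leaves of the target spider. Candidate centers for the $t$ required copies of $S^s_{b,k}$ are obtained by slicing the internally disjoint admissible paths at internal vertices at the appropriate distances $b$ and $k-b$ (and, where necessary, by extending greedily through the degree-$\ge\delta$ neighborhoods of the leaves). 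Since $r$ dwarfs $t$, a greedy selection yields $t$ pairwise internally disjoint copies, i.e., a copy of $t*S^s_{b,k}$, the desired contradiction.

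The principal obstacle I anticipate is reconciling the two distinct leg lengths $b$ and $k$ in $S^s_{b,k}$ with the single uniform length $j$ of our heavy-path stock. Carving a length-$b$ leg out of a length-$k$ admissible path by splitting at an internal vertex, and then demanding that this internal vertex simultaneously serve as the center from which $s-1$ length-$k$ legs extend to the \emph{same} prescribed leaves across $t$ pairwise disjoint copies, requires careful stratified book-keeping. I would manage this via iterated applications of Lemma \ref{lem:common-degree}, together with Lemma \ref{lem:admissible} to control the number of admissible paths with prescribed early vertices, exploiting the super-polynomial growth of $f(j,L)$ in $L$ to guarantee that at every selection step the pool of internally disjoint admissible paths still available is overwhelmingly large.
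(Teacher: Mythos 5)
Your proposal correctly identifies the starting moves (averaging, Lemma \ref{lem:independent-paths} together with Proposition \ref{prop} to extract internally-disjoint admissible paths, and Lemma \ref{lem:spider-shrinking2} and Lemma \ref{lem:common-degree} to compress and re-align paths into spiders), and you rightly flag the reconciliation of leg lengths $b$ and $k$ with the single length $j$ as the main obstacle. But the plan as stated has a genuine gap exactly at that obstacle, and it is the whole point of the lemma.

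Your central construction, ``slicing the internally disjoint admissible paths at internal vertices at the appropriate distances $b$ and $k-b$,'' only makes sense when $j$ is large relative to $b$ and $k$. Concretely, extending a length-$(j-b)$ stub to a length-$(k-b)$ leg via Lemma \ref{lem:spider-shrinking2} requires $k-j\le j-b$, i.e.\ $j>\frac{k+b}{2}$. The paper handles exactly this regime in its ``long'' case (Lemma \ref{lem:long}), with essentially the picture you sketch: fix the $(b{+}1)$-th vertex $w$, build a height-$b$ spider into $X$ and $s-1$ height-$(k-j)$ spiders into $Y''$, and stitch with the $L$ disjoint heavy paths. Where your argument has no answer is the complementary ``short'' regime $2\le j\le\frac{k+b}{2}$, where $j$ can be far smaller than both $b$ and $k$; there is no internal vertex of a $j$-path at distance $b$ or $k-b$ to slice at, and the remaining stub is too short to extend to the required height by a single application of Lemma \ref{lem:spider-shrinking2}. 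This is not a bookkeeping issue that more careful stratification can repair: with only length-$j$ heavy paths at hand, no amount of slicing produces a length-$b$ or length-$k$ leg.

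The missing idea is the paper's auxiliary bipartite graph and path-concatenation machinery (Lemmas \ref{lem:cleaning1}--\ref{lem:additional-spiders}). After a careful cleaning step that guarantees every initial segment of a surviving heavy path is heavily replicated (Lemma \ref{lem:cleaning1}) and hence that the auxiliary graph $B$ on $(A_0,A_j)$ has large minimum degree, one \emph{walks} $q+1$ steps in $B$ (where $b=qj+b'$) and replaces each $B$-edge by a heavy $j$-path to form a long spider in $G$; the leaves of its height-$b$ truncation supply the length-$b$ legs (Lemma \ref{lem:b-spider}). The length-$k$ legs are produced analogously with a parity correction $p=k+r-2j$ that needs a separate argument to show $p\ge0$ (Lemma \ref{lem:additional-spiders}). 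Without this concatenation mechanism, your plan cannot manufacture long legs from short heavy paths, and the contradiction you seek never materializes. Finally, a small point: the strong induction on $j$ you set up is not needed and does no work here --- admissibility already forces every proper subpath of a $j$-admissible path to be light by definition, and the paper proves the two cases directly without invoking the lemma for smaller $\ell$.
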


\begin{lem}\label{lem:heavy-spiders}
	Let $G$ be a $t*S^s_{b,k}$-free $K$-almost-regular graph on $n$ vertices with minimum degree $\delta=\omega(1)$.
	Let $1\le j_1\le b$ and $1\le j_2,\dots, j_s\le k$ be integers.
	Then provided that $L$ is sufficiently large compared to $s,t,k,K$, the number of heavy spiders with length vector $(j_1,\ldots, j_s)$ 
	is at most $\frac{27K^{j-2}}{L}n\delta^j$ where $j=j_1+\cdots+j_s$.
\end{lem}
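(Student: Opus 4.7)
My plan is to argue by contradiction: assume the number $H$ of heavy spiders with length vector $(j_1,\ldots,j_s)$ exceeds $\frac{27K^{j-2}}{L}n\delta^j$, and then produce a copy of $t*S^s_{b,k}$ in $G$, contradicting the hypothesis.

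The first step is to extract rich local structure from the assumption. By Definition \ref{def:admissible-spiders}, every leaf vector $v=(x_1,\ldots,x_s)$ supporting a heavy spider has at least $f(j,L)$ admissible $(j_1,\ldots,j_s)$-spiders with leaves $v$. Lemma \ref{lem:independent-spiders} together with Proposition \ref{prop} then upgrades this to at least $\max\{2L^2,f(j-1,L)\}$ pairwise internally disjoint admissible spiders per such leaf vector (which in particular is $\ge t$ for $L$ large). Summing the lower bound on $H$ across heavy leaf vectors and averaging, I obtain a distinguished heavy leaf vector $v^*=(x_1,\ldots,x_s)$ supporting a large family $\{S_\alpha\}$ of internally disjoint admissible $(j_1,\ldots,j_s)$-spiders, whose centers are pairwise distinct.

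The second step extends the spiders in $\{S_\alpha\}$ into copies of $S^s_{b,k}$ and assembles a copy of $t*S^s_{b,k}$. In the boundary case $(j_1,\ldots,j_s)=(b,k,\ldots,k)$, each $S_\alpha$ is already a copy of $S^s_{b,k}$ with leaves $v^*$, and any $t$ of them form $t*S^s_{b,k}$ immediately. For general length vectors, each leg must be lengthened: leg $1$ by $b-j_1$ edges and each leg $i\ge 2$ by $k-j_i$ edges, ending at new leaves $(x'_1,\ldots,x'_s)$. I plan to locate a common target $(x'_1,\ldots,x'_s)$ by iterating Lemma \ref{lem:common-degree} on the bipartite graphs between each $x_i$ and its sphere of vertices at the prescribed extending distance, while Lemma \ref{lem:heavy-paths} ensures that most candidate extending paths are light, so that a greedy selection produces $t$ internally disjoint extended copies.

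The main obstacle is coordinating the extensions: the copies must share only the new leaves $(x'_1,\ldots,x'_s)$ but the $x_i$'s, which were shared leaves of the $S_\alpha$'s, become internal vertices after lengthening and would otherwise be illegally shared across the $t$ extended copies. I anticipate resolving this by branching each extension immediately out of $x_i$: using $K$-almost-regularity and $\delta=\omega(1)$, pick $t$ distinct neighbors of $x_i$, one per copy, as the second vertex of the extension of leg $i$, and then route these branches to the common new leaf $x'_i$ along internally disjoint light paths supplied by Lemma \ref{lem:heavy-paths}. Tracking the constants through this branching-and-merging and verifying that enough room remains at each stage is where the bulk of the work lies, and accounts for the $1/L$ savings (from the $f(j,L)$ siblings of each heavy spider), the replacement of the trivial $K^j$ by $K^{j-2}$ (from pinning down two endpoints in the extension), and the constant $27$ in the stated bound.
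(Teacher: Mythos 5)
There is a genuine gap at the crux of your construction. After you locate a leaf vector $v^*=(x_1,\ldots,x_s)$ supporting many internally disjoint admissible $(j_1,\ldots,j_s)$-spiders, you propose to lengthen the legs \emph{outward} from $x_i$ to new leaves $x'_i$, and you correctly observe that the $x_i$'s then become internal vertices shared by all $t$ extended spiders. But the fix you propose --- branching at $x_i$ by choosing $t$ distinct second vertices $w_i^{(1)},\ldots,w_i^{(t)}$ --- does not remove the obstruction: the vertex $x_i$ itself remains in every extended copy as an internal vertex, and in $t*S^s_{b,k}$ the $t$ copies are required to be vertex-disjoint outside the new leaf set $\{x'_1,\ldots,x'_s\}$. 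No choice of downstream branches can save this; you would need the $t$ copies to diverge \emph{before} $x_i$, i.e., to have distinct $i$-th legs all the way from the center to $x'_i$, which your setup cannot deliver since the $S_\alpha$'s all converge at $x_i$. There is also a secondary gap: you never treat the degenerate situation where $j_i=1$ for two or more indices $i$, which the paper must handle separately (its Lemma A.1) because the main extension argument requires the leaf set of a certain proper subspider to consist of distinct vertices.

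The paper's route is structurally different and avoids the shared-internal-vertex problem entirely. It does not try to extend spiders outward from the heavy leaf vector. Instead, after a cleaning step (Lemma A.3) that produces a family $\cF$ in which every proper subtree is contained in $\Omega((K\delta)^{j-e(T)})$ members and every member has $\Omega(f(j,L))$ siblings with the same leaf vector, Lemma A.4 picks a \emph{fresh} target leaf vector $(v_1,\ldots,v_s)$ --- the leaf set of a subspider $R_0$ of one member, with length vector $(j_1-\gamma_1,\ldots,j_s-\gamma_s)$ chosen for parity --- and then builds each leg of length $k_i$ by a back-and-forth ``folding'' walk through a chain $S_1,T_1,S_2,T_2,\ldots$ of members of $\cF$, each segment $x_{\ell,i}r_{\ell,i}x_{\ell+1,i}$ contributing an even number of edges. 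The final spider's leaves $(v_1,\ldots,v_s)$ are common to all $t$ copies because they are chosen once, independently of the chains; disjointness outside the leaves is arranged by the cleaning guarantees and a forbidden-set argument. Your outline, by contrast, tries to reuse the leaves of the heavy vector as interior way-points, which is exactly what the paper's construction is designed to avoid.
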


We now show how Theorem \ref{thm:regular} follows from Lemma \ref{lem:heavy-paths}
and Lemma \ref{lem:heavy-spiders}.

\medskip

\noindent{\bf Proof of theorem \ref{thm:regular}:}
Let $L$ be a sufficiently large constant compared to $s,t,k,K$. Let $G$ be a $K$-almost-regular $t*S^s_{b,k}$-free graph on $n$ vertices with minimum degree $\delta$. Let $h=e(S^s_{b,k})=(s-1)k+b$. Suppose to the contrary that $\delta \geq C n^{\frac{s-1}{(s-1)k+b}}$,
where $C:=2f(h,L) (h+1)!$.
Let $\cS$ be the family of spiders in $G$ with length vector $(b,k,\ldots,k)$. By a greedy process, it is easy to see that
\begin{equation*}
|\cS|\ge\frac{1-o(1)}{(h+1)!}\cdot n\delta^{h}
\end{equation*}
Let $\cS_1$ be the family of spiders in $\cS$ that contain some heavy path of length $2\le j\le k$. As the maximum degree of $G$ is at most $K\delta$, by Lemma \ref{lem:heavy-paths}, we have
\begin{equation*}
|\cS_1|\le\sum_{j=2}^k \binom{h}{j}\frac{(j+1)^{j+1}}{L}n\delta^j (K\delta)^{h-j}\le\frac{(k+1)^{k+2}K^{sk}h!}{L}n\delta^{h},
\end{equation*}
where the factor $\binom{h}{j}$ upper bounds the number of positions of a $j$-heavy paths in $S_{b,k}^s$. Let $\cS_2$ be the family of spiders in $\cS$ that contain some $s$-legged heavy sub-spider. As the maximum degree of $G$ is at most $K\delta$, by Lemma \ref{lem:heavy-spiders}, we have 
\begin{equation*}
|\cS_2|\le \sum_{\substack{1\le j_1\le b\\ 1\le j_2,\ldots,j_s\le k}}\frac{27K^{j_1+\cdots+j_s-2}}{L}n\delta^{j_1+\cdots +j_s}\cdot(K\delta)^{h-(j_1+\cdots +j_s)}\le\frac{27K^{sk}k^s}{L}n\delta^{h}.
\end{equation*}
Let $\cS'=\cS-(\cS_1\cup\cS_2)$. Then it follows that
\begin{equation*}
|\cS'|\ge|\cS|-(|\cS_1|+|\cS_2|)\ge \frac{1-o(1)}{(h+1)!}\cdot n\delta^{h}-\frac{K^{sk}(27k^s+(k+1)^{k+2}h!)}{L}n\delta^{h}\ge \frac{n\delta^{h}}{2(h+1)!},
\end{equation*}
where the last inequality holds since $L$ is sufficiently large. As $\delta\ge Cn^{\frac{s-1}{(s-1)k+b}}$, and $C=2(h+1)!f(h,L)$, it follows that $|\cS'|\ge f(h,L)n^s$. By averaging, there exists an $s$-tuple $(x_1,\ldots,x_s)$ of distinct vertices, such that the sub-family $\cS''$ which consists of all spiders in $\cS'$ with leaf vector $(x_1,\ldots,x_s)$ has size $|\cS''|\ge f(h,L)$. For any $S\in\cS''$, since $S$ contains no heavy path of length at most $k$, every leg of $S$ is light. Since $S$ does not contain any $s$-legged heavy sub-spider, $S$ is light. So $\cS''$ is a family of at least $f(h,L)$ light spiders with leaf vector $(x_1,\ldots,x_s)$ and length vector $(b,k,\ldots,k)$. This contradicts the definition of the light spider with length vector $(b,k,\ldots,k)$.
\qed

\medskip

Thus, to complete our proof of Theorem \ref{thm:regular}, it remains to prove Lemma \ref{lem:heavy-paths}
and Lemma \ref{lem:heavy-spiders}.
Lemma \ref{lem:heavy-spiders} was proved by Janzer for the case $b=k$ in details in \cite{Janzer2}. It was pointed out in 
the concluding remarks of \cite{Janzer2} (Lemma 4.3) that the same proof works in more general settings (including
the one for our Lemma \ref{lem:heavy-spiders}). To make our paper self-contained, we include a sketch of a proof of Lemma \ref{lem:heavy-spiders} in the appendix, following Janzer's arguments. As the author of \cite{Janzer2} pointed out the main obstacle to proving Conjecture \ref{conj:Janzer} is to establish analogous statements for heavy paths. Indeed, the method developed in \cite{CJL} (and later used in \cite{JQ} and \cite{Janzer2}) for heavy paths is not applicable in the new setting.  

Our main contribution in this paper is to develop a method to handle heavy paths for $t*S^s_{b,k}$-free graphs, resulting in Lemma \ref{lem:heavy-paths}. 
We believe that some of the ideas we developed here can be further expanded to potentially yield further progress
on Conjecture \ref{conj:exponent} and Conjecture \ref{conj:Janzer}.

\subsection{Building $t*S^s_{b,k}$ using heavy paths}
The rest of the section is devoted to proving Lemma \ref{lem:heavy-paths}.
The proof consists of two parts: the case of $j>\frac{k+b}{2}$ (Lemma \ref{lem:long}) and the case of $2\le j\le\frac{k+b}{2}$
(Lemma \ref{lem:short}).

\subsubsection{Long heavy paths: the $j>\frac{k+b}{2}$ case}

\begin{lem}\label{lem:long}
	Let $G$ be a $t*S^s_{b,k}$-free $K$-almost-regular graph on $n$ vertices with minimum degree $\delta=\omega(1)$. Then provided that $L$ is sufficiently large compared to $s,t,k,K$, for any $\frac{k+b}{2}< j\le k$, the number of $j$-heavy paths in $G$ is at most $\frac{n\delta^{j}}L$.
\end{lem}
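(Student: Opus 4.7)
My plan is a proof by contradiction: assume the number of $j$-heavy paths exceeds $n\delta^j/L$, and then embed $t*S^s_{b,k}$ into $G$. For each heavy pair of endpoints $(x,y)$, the definition of heaviness guarantees at least $f(j,L)$ admissible $j$-paths between them, and Lemma \ref{lem:independent-paths} combined with Proposition \ref{prop} gives at least $\max\{2L^2, f(j-1,L)\}$ of them that are internally disjoint. I will use this abundance to build $t$ internally disjoint spiders with a common leaf vector.

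The first step is to pin down a single common leaf. By averaging the endpoint incidences of the assumed $>n\delta^j/L$ heavy paths, I find a vertex $y$ that is an endpoint of at least $2\delta^j/L$ heavy paths. For each detected heavy pair $(x,y)$ I extract the large family of internally disjoint admissible $j$-paths from $x$ to $y$, and on each such path I mark the vertex $u$ at distance $b$ from $y$. The last $b$ edges of the path form a $b$-path from $u$ to $y$, and distinct internally disjoint paths yield distinct marked vertices with pairwise internally disjoint $b$-legs into $y$. Aggregating these markers over all detected heavy pairs $(x,y)$ produces a very large set $U$ of candidate spider centers, each equipped with a $b$-path into the common leaf $y$.

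The second step is to coordinate the remaining $s-1$ common leaves. Since $G$ is $K$-almost-regular with minimum degree $\delta$, each $u\in U$ is the apex of roughly $\delta^{k(s-1)}$ ordered configurations of $s-1$ internally disjoint length-$k$ paths leaving $u$. Applying Lemma \ref{lem:common-degree} iteratively $s-1$ times, I carve out leaves $x_2,x_3,\ldots,x_s$ such that, after each selection step, a positive fraction of the remaining centers still have an internally disjoint length-$k$ path to the newly chosen leaf. The hypothesis $j>(k+b)/2$ and the enormous growth of $f(j,L)$ supplied by Proposition \ref{prop} together ensure that $|U|$ is large enough to survive $s-1$ such rounds and still leave far more than $t$ surviving centers.

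A final greedy argument extracts $t$ such centers whose full spiders, consisting of the $b$-leg to $y$ and the $s-1$ $k$-legs to $x_2,\ldots,x_s$, are pairwise internally disjoint; at each greedy step only a bounded number of vertices must be avoided, which is negligible compared to $\delta$. This yields a copy of $t*S^s_{b,k}$ in $G$, the desired contradiction. The main technical obstacle lies in the coordination step: each application of Lemma \ref{lem:common-degree} shrinks the center pool by a factor depending on $\delta/n$, so $|U|$ must be substantially larger than any quantity depending on $s,t,b,k,K$ that accumulates after $s-1$ rounds. The hypothesis $j>(k+b)/2$ is precisely what makes this balance feasible; in the complementary short-path regime $j\le(k+b)/2$ handled by Lemma \ref{lem:short}, a more delicate argument with the extra $(j+1)^{j+1}$ slack factor is necessary.
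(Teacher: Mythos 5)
Your plan diverges substantially from the paper's, and the divergence introduces gaps that I do not think can be repaired within this framework.

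The decisive structural difference is how the $k$-legs are formed. You fix a common leaf $y$ at an \emph{endpoint} of the heavy paths, let $U\subseteq\Gamma_b(y)$ be the vertices at distance $b$ from $y$ on the abundant admissible $j$-paths, and then try to grow the remaining $s-1$ legs of length $k$ out of each $u\in U$ as \emph{arbitrary} $k$-paths, coordinated by Lemma \ref{lem:common-degree}. But then the auxiliary bipartite graph (centers $U$ versus candidate leaves) has density roughly $\delta^{k}/n$, which is $o(1)$ in the regime $\delta=\Theta(n^{(s-1)/((s-1)k+b)})$. Your claim that ``a positive fraction of the remaining centers'' survives each round is false: each round multiplies the pool by roughly $\delta^{k}/n$, not by a constant. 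After $s-1$ rounds the pool is at most $|U|\cdot(\delta^{k}/n)^{s-1}$, and since $|U|\le(K\delta)^b$, this is $O(1)$ at best, and only if $|U|=\Theta(\delta^{b})$, which you have not verified — averaging over paths through $u$ only guarantees $|U|=\Omega(\delta)$. For $b>1$ the surviving pool is $o(1)$, and the argument breaks. Moreover, the final greedy step needs, for each surviving pair $(u,x_\ell)$, \emph{many} internally disjoint $k$-paths to avoid previously used vertices; the common-neighborhood argument only guarantees the existence of one. Finally, your plan never makes concrete use of the hypothesis $j>\frac{k+b}{2}$, and if it worked it would establish the bound for all $j$, which the paper shows requires a genuinely different and much harder argument when $j\le\frac{k+b}{2}$; this is a strong signal that a step must fail.

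The paper's proof avoids all of these problems by placing the averaged vertex $w$ at the \emph{interior} position $b{+}1$ along the heavy paths. Then $X\subseteq\Gamma_b(w)$ has size $O(\delta^{b})$ and $Y\subseteq\Gamma_{j-b}(w)$ has size $O(\delta^{j-b})$, so the auxiliary bipartite graph $B$ on $X,Y$ has constant density, and Lemma \ref{lem:common-degree} really does preserve a constant fraction. The $k$-legs of the final spider are then built as a heavy $j$-path (from $x_i$ to some $y\in Y''$, with $\geq L$ internally disjoint choices by Claim 1 of the proof) concatenated with a short $(k-j)$-segment. The hypothesis $j>\frac{k+b}{2}$ enters precisely here: it gives $k-j<j-b$, so the $(k-j)$-extensions can be carved from the $(j-b)$-paths emanating from $w$ using Lemma \ref{lem:spider-shrinking2}, with centers $v_\ell\neq w$. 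The abundance of internally disjoint heavy $j$-paths is also exactly what makes the final greedy assembly of $t*S^s_{b,k}$ go through. In short, the paper threads the heaviness structure through the long part of each $k$-leg, whereas you thread it only through the short $b$-leg and are left trying to synchronize sparse, generic $k$-paths — a task the available lemmas are not strong enough to accomplish.
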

\begin{proof} We define some constants as follows. Let 
\[c_1=\frac{1}{4Lf(j-1,L)^2 }, \, c_2=\frac{c_1}{4K^b},  \, c_3=\frac{c_1}{4K^{j-b}},  \, c_4=\frac{c_3}{bK^{b-1}}, \,
c_5=\frac{c_2}{K^{j-b}},\, c_6=(\frac{c_5}{2})^t\cdot c_2.\]

Suppose to the contrary that the number of $j$-heavy paths is at least $\frac{n\delta^{j}}L$. By averaging, there exists a vertex $w$ such that the family $\cP_w$ consisting of all the $j$-heavy paths of the form $xx_1\cdots x_{b-1} w x_{b+1}\cdots x_{j-1}y$ has size at least $\frac{\delta^{j}}{L}$. Let $X$ be the set of vertices in $G$  that play the role of $x$ in some member of $\cP_w$ and
$Y$ the set of vertices in $G$ that play the role of $y$ in some member of $\cP_w$.  Then  $X\subseteq \Gamma_b(w)$ and $Y\subseteq \Gamma_{j-b}(w)$.
Since $G$ is $K$-almost-regular and thus has maximum degree  at most $K\delta$, we have

\begin{equation}\label{eq:XY}
|X|\le (K\delta)^b \mathrm{~~and~~} |Y|\le (K\delta)^{j-b}.
\end{equation}

Note that $X,Y$ may not be disjoint.
We define an auxiliary graph $B$ on $X\cup Y$, such that $\forall x\in X, y\in Y$,  $xy\in E(B)$ if and only if
some member $P$ of $\cP_w$ have ends $x$ and $y$.

\medskip

{\bf Claim 1.} For every $x\in X$ there is a $(x,w)$-path of length $b$ in $G$. For every $y\in Y$ there is an $(w,y$)-path
of length $j-b$ in $G$.
For all $x\in X, y\in Y$ such that $xy\in E(B)$ there exist at least $L$ internally disjoint $x,y$-paths of length $j$ in $G$. 

\medskip

{\it Proof of Claim 1.}  The first two statements follow from the definitions of $X$ and $Y$.
Suppose $x\in X, y\in Y$ and $xy\in E(B)$. By definition, some member $P\in \cP$ has $x,y$ as ends.
By the definition of $\cP$, $P$ is $j$-heavy and thus there exist at least $f(j,L)$ many $j$-admissible paths with ends $x$ and $y$ in $G$. By Lemma \ref{lem:independent-paths} among them we can find at least
\[f(j,L)/[{j^2 f(j-1,L)^2}]\geq L \] 
that are pairwise vertex disjoint outside $\{x,y\}$, where the inequality holds by Proposition \ref{prop}.
\qed

\medskip

For any fixed $x\in X$ and $y\in Y$, by Lemma \ref{lem:admissible} there are at most $f(b,L)\cdot f(j-b,L)$ members of $\cP_w$ that have
ends $x$ and $y$. Hence

\[e(B)\ge\frac{|\cP_w|}{f(b,L)f(j-b,L)}\ge\frac{ \delta^{j}}{Lf(j-1,L)^2}.\]

Now, let us color each vertex in $X\cup Y$ with color $1$ or $2$ independently at random with probability $\frac12$ each.
Let $X_1$ denote the set of vertices in $X$ that receive color $1$ and $Y_2$ the set of vertices in $Y$ that receive color $2$.
Let $\widetilde{B}$ denote the subgraph of $B$ consisting of edges that join a vertex in $X_1$ to a vertex in $Y_2$.
Each edge of $B$ has probability at least $1/4$ of being in $\widetilde{B}$. Hence there exists a coloring such that
the resulting $\widetilde{B}$ has at least $(1/4)|B|$ edges. 
Then $\widetilde{B}$ is bipartite with parts $X_1$ and $Y_2$ and by our discussion
\begin{equation} \label{eq:Btilde-lower}
e(\widetilde{B})\ge  \frac{\delta^{j}}{4Lf(j-1,L)^2}=c_1 \delta^j.
\end{equation}

By Lemma \ref{lem:min-degree}, $\widetilde{B}$ contains a subgraph $B'$
with parts $X'\subseteq X_1$ and $Y'\subseteq Y_2$ such that
\begin{equation} \label{eq:min-degree1}
  \forall x\in X', d_{B'}(x)\ge \frac{e(\widetilde{B})}{4|X_1|}\geq \frac{c_1}{4K^b} \delta^{j-b} =c_2\delta^{j-b},  
 \end{equation}
 and
 \begin{equation}\label{eq:min-degree2}
 \forall y\in Y', d_{B'}(y)\geq \frac{e(\widetilde{B})}{4|Y_2|}\geq \frac{c_1}{4K^{j-b}} \delta^b =c_3\delta^b.
 \end{equation}
 By \eqref{eq:min-degree1} and \eqref{eq:min-degree2},
 \[|X'|\geq c_3 \delta^b, \mbox{ and }|Y'|\geq c_2\delta^{j-b}.\]
Since $X'\subseteq X$, by Claim 1,  there are at least $|X'|$ paths of length $b$ with one end $w$ and another end in $X'$. By Lemma \ref{lem:spider-shrinking2}, there exists a spider $T$ of height $b$ with center $w$ and leaves in $X'$ whose number of legs is at least $\frac{|X'|}{b(K\delta)^{b-1}}\geq \frac{c_3}{bK^{b-1}} \delta =c_4\delta$.

Let $X''\subseteq X'$ be the leaf set of this spider. Then $|X''|\ge c_4\delta$.
Let $B''$ be the subgraph of $B'$ induced by $X''\cup Y'$. By \eqref{eq:XY} and \eqref{eq:min-degree1} 

\[e(B'')\ge c_2\delta^{j-b}|X''|\geq \frac{c_2}{K^{j-b}}|X''||Y'|=c_5|X''||Y'|.\]
Since $|X''|\geq c_4\delta$ and $\delta=\omega(1)$, for sufficiently large $n$ we may assume that 
$c_5 |X''|\geq 2t$. By Lemma \ref{lem:common-degree}
 \begin{equation} \label{eq:large-common}
 \exists X_0\subseteq X'' \mbox{ such that } |X_0|=t \mbox{ and }   |N^*_{B''}(X_0)|\geq (c_5/2)^t |Y'|.
\end{equation}
Now, let us fix a $t$-set $X_0\subseteq X''$ guaranteed in \eqref{eq:large-common}.
 Let $T_0$ be the sub-spider of $T$ with leaf set $X_0$. Let $Y''=N^*_{B''}(X_0)$. 
  Then \[|Y''|\ge(c_5/2)^t |Y'|\geq (c_5/2)^t c_2\delta^{j-b} =c_6 \delta^{j-b}.\]  
  
  Let $\cC$ be the family of paths of length $j-b$ with one end $w$ and another end in $Y''$. By Claim 1, 
  $|\cC|\geq |Y''|\geq c_6 \delta^{j-b}$.
  Since $G$ has maximum degree at most $K\delta$, for any vertex $u\neq w$, the number of paths in $\cC$ that contain $u$ is at most $(K\delta)^{j-b-1}$. Let $\cC_1$ be the family of paths in $\cC$ that is vertex-disjoint from $V(T_0)-\{w\}$. Then $$|\cC_1|\ge|\cC|-(|V(T_0)|-1)(K\delta)^{j-b-1}\geq c_6\delta^{j-b}-bt(K\delta)^{j-b-1}\geq (c_6/2) \delta^{j-b},$$ 
where the last inequality holds for sufficiently large $n$ because $\delta=\omega(1)$. 
As $j>\frac{k+b}{2}$, we have $k-j<j-b$. Applying Lemma \ref{lem:spider-shrinking2} to $\cC_1$, as $\delta=\omega(1)$, there exists a $t$-legged spider $T_1$ of height $k-j$ with center $v_1\neq w$ and leaf set $Y_1\subseteq Y''$. Note that $V(T_0)\cap V(T_1)=\emptyset$. 
Using the same strategy, we can find $s-1$ vertex-disjoint $t$-legged spiders $T_1,\ldots,T_{s-1}$ of height $k-j$ one by one, with $T_i$'s center $v_i\neq w$ and leaf set $Y_i\subseteq Y''$, such that $V(T_i)\cap V(T_0)=\emptyset$.

Suppose $X_0=\{x_1,\dots, x_t\}$. For each $i\in [s-1]$, suppose $Y_i=\{y_i^1,\ldots, y_i^t\}$.
Let $Y_0=\bigcup_{i=1}^{s-1}Y_i$. Then
$Y_0\subseteq Y''=N^*_{B''}(X_0)$. Hence, $\forall x\in X_0, y\in Y_0$, $xy\in E(B'')\subseteq E(B)$
and  by Claim 1 there exist at least $L$ internally disjoint paths of length $j$ joining $x$ and $y$. 
As $L$ is a sufficiently large constant, we can greedily find $t(s-1)$ paths $P_{i,\ell}$ of length $j$, such that
for any $i\in [t]$ and $\ell\in [s-1]$, $P_{i,\ell}$ has ends $x_i$ and $y_\ell^i$ and contains no vertex of 
$\bigcup_{i=0}^{s-1} V(T_i)\cup (X_0\setminus\{x_i\})$ and that the $P_{i,\ell}$'s are pairwise vertex disjoint outside $X_0$.
Now, $(\bigcup_{i=0}^{s-1} T_i) \cup (\bigcup_{i\in [t], \ell\in [s-1]} P_{i,\ell})$ forms a copy of $t*S^s_{b,k}$ in $G$, a contradiction. This completes our proof.
\end{proof}


\subsubsection{Short heavy paths: the $2\le j\le\frac{k+b}{2}$ case}

This subsection handles the most difficult part of our main proof and is where most of the new ideas are used.

\begin{lem}\label{lem:short}
	Let $G$ be a $t*S^s_{b,k}$-free $K$-almost-regular graph on $n$ vertices with minimum degree $\delta=\omega(1)$. Then provided that $L$ is sufficiently large compared to $s,t,k,K$, for any $2\le j\le\frac{k+b}{2}$, the number of $j$-heavy paths is at most $\frac{(j+1)^{j+1}}{L} n\delta^{j}$.
\end{lem}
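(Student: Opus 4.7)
The plan is to argue by contradiction along the same lines as Lemma~\ref{lem:long}: assume the number of $j$-heavy paths exceeds $\frac{(j+1)^{j+1}}{L}n\delta^{j}$ and derive a copy of $t*S^{s}_{b,k}$ in $G$. The overall three-phase template of \emph{averaging} to a concentrated sub-family, \emph{cleaning} via random bipartition together with Lemma~\ref{lem:min-degree}, and \emph{assembly} of the spider still applies, but the assembly phase must be redesigned: in the regime $k-j\ge j-b$, the direct construction of Lemma~\ref{lem:long}, which shrinks length-$(j-b)$ paths from a single anchor $w$ into a height-$(k-j)$ spider via Lemma~\ref{lem:spider-shrinking2}, is no longer available.

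The first new ingredient is \emph{multi-position averaging}. Each heavy path of length $j$ has $j+1$ vertices, and the extra multiplicative slack $(j+1)^{j+1}$ in the target bound is just enough to absorb roughly $j+1$ nested rounds of pigeonhole, one per position. I would iteratively fix anchor vertices $w_{0},w_{1},\dots$ at a carefully chosen sequence of positions along the heavy paths, each round keeping a $1/(j+1)$ fraction of the surviving family. After these rounds we retain a sub-family $\cP^{*}$ of heavy paths agreeing along a rich combinatorial skeleton, giving several simultaneously usable break-points rather than the single anchor $w$ of Lemma~\ref{lem:long}.

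With these multiple break-points available, the cleaning phase produces auxiliary bipartite graphs between the various intermediate ``layers'' $\Gamma_{\ell_{i}}(w_{i})$, each of which can be cleaned via Lemmas~\ref{lem:min-degree} and~\ref{lem:common-degree} and combined into a layered structure playing the role of the bipartite graph $B$ in Lemma~\ref{lem:long}. The assembly then builds each required length-$k$ leg \emph{segment-by-segment}: each segment has length at most $j-b$ (or at most $j$) and is realized as one leg of a small spider anchored at one of the $w_{i}$, obtained from the layered structure via Lemma~\ref{lem:spider-shrinking2}. Because each individual segment is short enough for Lemma~\ref{lem:spider-shrinking2} to apply, the obstacle $k-j>j-b$ is circumvented without changing the basic tool set.

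The main obstacle I foresee lies in guaranteeing that these segments concatenate into genuine internally-disjoint length-$k$ paths (not walks) and that the $t$ parallel copies remain pairwise vertex-disjoint outside the shared leaves. This greedy-selection problem relies crucially on $\delta=\omega(1)$, so that vertices already committed to earlier segments contribute only lower-order terms, and on the $\ge L$ internally disjoint length-$j$ paths between each pair of heavy-path endpoints guaranteed by heaviness together with Lemma~\ref{lem:independent-paths}. Once these disjointness requirements are met, the final greedy selection of the $t(s-1)$ connecting length-$j$ paths $P_{i,\ell}$ between the leaves and the resulting contradiction with the $t*S^{s}_{b,k}$-freeness of $G$ proceed exactly as in the proof of Lemma~\ref{lem:long}.
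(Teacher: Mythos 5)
Your proposal is not a proof; it is a high-level sketch that diverges from the paper's actual argument in two places where the divergence matters, and it leaves the central difficulty unresolved.

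First, the source of the factor $(j+1)^{j+1}$ is misidentified. In the paper (Lemma~\ref{lem:cleaning1}) this factor arises from a \emph{single} random colouring of $V(G)$ with $j+1$ colours, so that with probability $(j+1)^{-(j+1)}$ all $j+1$ vertices of a given heavy path land in distinct prescribed colour classes $V_0',\dots,V_j'$; this produces a layered structure $A_0,\dots,A_j$ in one step. Your ``multi-position averaging'' — iteratively fixing anchors $w_0,w_1,\dots$ while keeping a $1/(j+1)$ fraction each round — is a different mechanism, and it does not work as stated: fixing a single anchor already costs a factor of $n$ (not $j+1$), so $j+1$ nested anchor choices would over-constrain the surviving family to a set of size $\delta^{j}/(Ln^{j})$, which is useless. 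The $(j+1)^{j+1}$ slack cannot pay for multiple anchors; it pays for the colouring.

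Second, and more importantly, you have not identified the key new construction that makes the short-heavy-path case go through. The paper does \emph{not} build the long leg segments via Lemma~\ref{lem:spider-shrinking2} applied to several anchors; that is exactly the tool that fails here, as you yourself note. The crucial idea, realised in Lemma~\ref{lem:b-spider} and Lemma~\ref{lem:additional-spiders}, is to form an auxiliary bipartite graph $B$ on $A_0\cup A_j$ whose edges record vertex pairs joined by many ($\ge DM$) members of the cleaned family $\cF$, observe that $B$ has large minimum degree, grow short spiders/paths \emph{inside $B$}, and then ``lift'' each edge of $B$ to a heavy path in $G$; internal-disjointness is secured by Lemma~\ref{lem:independent-paths}. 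This produces spider legs of length $(q+1)j$ in $G$ (a multiple of $j$), from which the height-$b$ spider is carved, and the remaining height-$k$ legs are assembled from a path of length $j-r$ into $A_j$ (Lemma~\ref{lem:cleaning2}), a path of even length $p=k+r-2j$ inside the last-layer graph $F$, and one final heavy path of length $j$ back to a common neighbour in $V_0$. None of this machinery — the auxiliary graph $B$, the lifting of $B$-edges to heavy paths, the careful split $b=qj+b'$ with the case analysis on the parity of $q$ and of $k+r-2j$, and the verification that $p\ge 0$ precisely when $j\le(k+b)/2$ — appears in your outline, and the parity bookkeeping in particular is not an afterthought: a wrong choice of $r$ produces walks of the wrong length, and there is no generic greedy fix. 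So there is a genuine gap: you have correctly diagnosed that the assembly phase must change, but the mechanism you propose (segment-by-segment spider-shrinking from multiple anchors) is not the one that closes it, and the step that does — concatenation of heavy paths controlled through the bipartite graph $B$ with the attendant parity analysis — is missing.
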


We break the proof of Lemma \ref{lem:short} into several steps. The general strategy is to show that if the family $\cF$ of $j$-heavy paths is too large then we find a copy of $t*S^k_{b,k}$ in $G$, which is a contradiction. We start by doing some cleaning to $\cF$ in order to set up further arguments. Before that, let us set some constants to be used throughout the subsection.

\begin{Definition} \label{def:constants}
	Let $D=2K^jL (f(j-1,L)^2$ and  $M=D^{s+1}$.
\end{Definition}
Comparing Definition \ref{def:f-definition} and Definition \ref{def:constants}, we see that
\begin{equation} \label{eq:f-facts}
f(j,L)=10j^4 D^{s+3} = 10j^4 D^2 \cdot M. 
\end{equation}

Now we introduce our cleaning lemma. Given a path $P=v_0v_1\cdots v_j$ and $0\leq i<j$, we define the {\it initial $i$-segment} of $P$ to be the subpath $v_0v_1\cdots v_i$. 

\begin{lem}\label{lem:cleaning1}
	Let $G$ be a $K$-almost-regular graph on $n$ vertices with minimum degree $\delta=\omega(1)$. Suppose that the number of $j$-heavy paths is at least $\frac{(j+1)^{j+1}}{L} n\delta^{j}$. Then there exist a vertex $w$, vertex disjoint sets $A_0,\dots, A_j$ and a family $\cF$ of $j$-heavy paths with $\bigcup_{i=0}^j A_i=\bigcup_{P\in\cF}V(P)$ satisfying
\begin{enumerate}
\item $A_0\subseteq \Gamma_1(w) \mbox{ and } A_j\subseteq \Gamma_{j-1} (w)$. 
\item Each member of $\cF$ has the form $v_0v_1\cdots v_j$ where $\forall i\in \{0,1,\ldots, j\}, v_i\in A_i$.
\item There exists a set $V_0$ with $A_0\subseteq V_0\subseteq \Gamma_1(w)\setminus A_j$ such that  for every $y\in A_j$, there are at least $\frac{|V_0|}{D}$  many $x\in V_0$ such that $x,y$ are ends of a heavy $j$-path in $G$. Furthermore,
$|V_0|\geq (2K/D)\delta$.
\item For each $x\in A_0$, there are at least $M$ vertices $y\in A_j$ such that $x,y$ are ends of at least $DM$ members of $\cF$. 
For each $y\in A_j$, there are at least $M$ vertices $x\in A_0$ such that $x,y$ are ends of at least $DM$ members of $\cF$. 
\item For each $P\in \cF$ and $0\leq i<j$, the initial $i$-segment of $P$ is contained in at least $jM(K\delta)^{j-i-1}$ members of $\cF$.
\end{enumerate}
\end{lem}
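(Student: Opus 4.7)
My plan is to construct $w$, the layers $A_0,\ldots,A_j$, the family $\cF$, and the set $V_0$ by a randomized layering, an averaging choice of $w$, an iterative cleaning step for properties 4 and 5, and a greedy construction of $V_0$; properties 1 and 2 will follow from the first two steps.

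First I color each vertex of $G$ independently and uniformly with a color in $\{0,1,\ldots,j\}$, keeping only those heavy $j$-paths whose $i$-th vertex has color $i$. The first-moment method fixes a coloring that leaves at least $n\delta^j/L$ layered heavy paths in a family $\cH_1$, with the color classes $A_0',\ldots,A_j'$ forming disjoint candidate layers. Next I exploit heaviness to pick $w$: a heavy $(x,y)$-path admits $\ge f(j,L)$ admissible $(x,y)$-paths, and by Lemma~\ref{lem:admissible} at most $f(j-1,L)$ of them share the same second vertex, so the second vertices provide at least $R:=f(j,L)/f(j-1,L)$ distinct ``anchor candidates'' lying in $\Gamma_1(x)\cap \Gamma_{j-1}(y)$. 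Averaging the pair count $\{(P,u):u\text{ is an anchor candidate for }P\in\cH_1\}$ over vertices $u$ yields a vertex $w$ for which
\[
\cF_0 := \{P\in\cH_1 : v_0(P)\in\Gamma_1(w),\ v_j(P)\in\Gamma_{j-1}(w)\}
\]
has size at least $R\delta^j/L$. Note that $v_1(P)$ is unconstrained in $\cF_0$, which is essential for property 5 at $i=0$ to be satisfiable.

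I then iteratively prune $\cF_0$: remove any initial $i$-segment contained in fewer than $jM(K\delta)^{j-i-1}$ surviving paths, and any endpoint $x\in A_0$ (resp.\ $y\in A_j$) with fewer than $M$ partners sharing at least $DM$ surviving joint paths, in each case deleting every path through the violating object. A level-by-level charging bounds the prefix losses by $\sum_{i=0}^{j-1}(K\delta)^{i+1}\cdot jM(K\delta)^{j-i-1} = j^2M(K\delta)^j$ and the endpoint losses by $O(DM(K\delta)^j)$. Substituting $D=2K^jLf(j-1,L)^2$ and $M=D^{s+1}$ and comparing to $|\cF_0|\ge R\delta^j/L$, one checks that the total loss is at most $|\cF_0|/2$ once $L$ is large enough compared to $s,t,k,K$, so $\cF\ne\emptyset$ and in fact of order $R\delta^j/L$; setting $A_i:=\{v_i(P):P\in\cF\}$ gives properties 1, 2, 4, and 5. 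Finally, I choose $V_0\subseteq \Gamma_1(w)\setminus A_j$ of size $\lceil(2K/D)\delta\rceil$ containing $A_0$, adding vertices greedily to keep the heavy-partner degree of each $y\in A_j$ large; the lower bound $|V_0|\ge(2K/D)\delta$ uses $|\Gamma_1(w)|\ge\delta$ plus a degree bound on $|\Gamma_1(w)\cap A_j|$, and the witness condition $\ge |V_0|/D$ per $y$ follows from property 4 by averaging in the bipartite heavy-partner graph between $\Gamma_1(w)\setminus A_j$ and $A_j$.

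The main obstacle will be the cleaning step, because the prefix rule and the endpoint rule interact: removing paths for a bad endpoint can create new bad prefixes and conversely, so the iteration may cascade. I plan to resolve this by fixing the order of removals (an outer loop over decreasing $i$ for the prefix rule with the endpoint rule applied in an inner loop) and augmenting the charging with a potential function so that each deleted path is charged to exactly one structural defect, keeping the naive level-by-level loss bounds valid.
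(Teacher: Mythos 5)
Your overall architecture (random layering, averaging to find $w$, iterative cleaning for conditions 4 and 5) matches the paper's, but there are two genuine gaps. The more serious one concerns condition 3: you claim the witness condition ``at least $|V_0|/D$ heavy partners for each $y\in A_j$'' follows from condition 4 by averaging, but condition 4 only guarantees each $y\in A_j$ has $M=D^{s+1}=O(1)$ partners in $A_0$, whereas $|V_0|/D\geq (2K/D^2)\delta=\omega(1)$. A constant number of partners cannot be inflated to $\Omega(\delta)$ partners by greedily adding vertices of $\Gamma_1(w)$ to $V_0$ --- adding non-partners only lowers the fraction. The paper obtains condition 3 by a separate degree argument on the bipartite endpoint graph $B$ between $V_0$ (all first vertices of the paths in $\cP_w$) and $V_j$, carried out \emph{before} the path-family cleaning: it shows $e(B)\geq |\cP_w|/f(j-1,L)\geq (2K^j/D)\delta^j$ and keeps only the vertices $V_j^*\subseteq V_j$ of $B$-degree at least $|V_0|/D$, discarding at most $|V_0||V_j|/D$ edges. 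Crucially this uses that every path counted passes through $w$ as its \emph{second vertex}, so that at most $f(j-1,L)$ of them share a given endpoint pair; your anchoring of $w$ (requiring only $v_0\in\Gamma_1(w)$ and $v_j\in\Gamma_{j-1}(w)$, with the path itself free to avoid $w$) destroys exactly this per-pair bound, so your larger count $R\delta^j/L$ cannot be converted into a count of distinct endpoint pairs.

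The second gap is in the cleaning: your bound $O(DM(K\delta)^j)$ for the endpoint losses implicitly requires an upper bound on the number of members of $\cF_0$ sharing a given endpoint pair $(x,y)$. When $x$ becomes small you must still pay for its fewer than $M$ ``rich'' partners $y$, each of which may carry arbitrarily many joint paths, since the number of $j$-admissible (hence of layered heavy) paths between two fixed vertices has no a priori upper bound. The paper forces such a cap by construction, taking $\cF_0=\bigcup_{xy\in E(B^*)}\cJ'_{xy}$ with $|\cJ'_{xy}|$ equal to exactly $10j^2D^2M$; without a truncation of this kind the endpoint-removal step can in principle delete almost all of $\cF_0$. (By contrast, the cascading between the two removal rules that you flag as the main obstacle is not a real one: each vertex and each initial segment can trigger a removal at most once, so the total loss is bounded by summing over all potential defects regardless of the order of removals, which is exactly what the paper does.)
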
	
\begin{proof}
 Let $\cC$ be the collection of all $j$-heavy paths in $G$. By our assumption, $|\cC|\geq \frac{(j+1)^{j+1}}{L}n\delta^j$.
 Let us independently color each vertex of $G$ with a color in $\{0,1,\ldots, j\}$, with each color chosen uniformly at random. For each $0\leq i\leq j$, let $V_i'$ denote the set of vertices in $G$ receiving color $i$.
For any $j$-heavy path $P=v_0v_1\cdots v_j$, call $P$ {\it good} if $\forall 0\leq i\leq j, v_i\in V_i'$. Let $\cC'$ denote the family of
all good heavy $j$-paths. Clearly each $j$-heavy path in $G$ is good with probability $(\frac{1}{j+1})^{j+1}$. So there exists a vertex coloring for which
\[|\cC'|\geq \frac{|\cC|}{(j+1)^{j+1}}\geq \frac{n\delta^j}{L}.\]
Let us fix such a coloring and the corresponding $\cC'$.
 
	By averaging, there exists a vertex $w$ such that subfamily $\cP_w$ of members of $\cC'$ of the form $v_0wv_2\cdots v_j$ has size at least $|\cP_w|\ge\frac{\delta^{j}}{L}$. For each $i\in\{0,1,\ldots,j\}$, let $V_i$ be the set of vertices in $V_i'$ that are contained in members of $\cP_w$.
	By our definitions, $V_0\subseteq \Gamma_1(w)$ and $V_j\subseteq \Gamma_{j-1}(w)$.
	Since $G$ has maximum degree at most $K\delta$, we have
	\begin{equation} \label{eq:XY-bounds}
	|V_0|\le K\delta \mbox{ and } |V_j|\le {(K\delta)^{j-1}}.
	\end{equation}
	
	Let $B$ denote the auxiliary bipartite graph with a bipartition $(V_0,V_j)$ such that $\forall x\in V_0, y\in V_j, xy\in E(B)$ if and only if $x,y$ are ends of some member of $\cP_w$.  For each $xy\in E(B)$ with $x\in V_0, y\in V_j$, let 
	$\cP_{xy}$ be the subfamily of members of $\cP_w$ that cover $x,y$ and let $\cJ_{xy}$
	be the family of $j$-heavy paths in $G$ that have $x,y$ as ends.

\medskip

{\bf Claim 1.} For each $xy\in E(B)$, we have $1\leq |\cP_{xy}|\leq f(j-1,L)$ and $|\cJ_{xy}|\geq f(j,L)$.

\medskip

{\it Proof of Claim 1.} Let $xy\in E(B)$, with $x\in V_0, y\in V_j$. That $|\cP_{xy}|\geq 1$ is clear.
Let $P\in \cP_{xy}$. By definition $P$ is $j$-admissible and $P=xw\cup Q$, where $Q$ is a $(w,y)$-path of length $j-1$.
Since $P$ is admissible, $Q$ is $(j-1)$-light. So the number of possible $Q$ in $G$ is at most $f(j-1,L)$. So, $|\cP_{xy}|\leq f(j-1,L)$.
Next, since $P$ is a $j$-heavy path in $G$ with ends $x,y$, by definition, $G$ contains
at least $f(j,L)$ $j$-heavy paths with ends $x,y$. So $|\cJ_{xy}|\geq f(j,L)$.
\qed

\medskip

By Claim 1 and Definition \ref{def:constants}

	\begin{equation}\label{eq:B0-lower}
	e(B)\ge\frac{|\cP_w|}{f(j-1,L)}\ge\frac{\delta^j}{Lf(j-1,L)}\ge (2K^j/D)\delta^j.
	\end{equation}
Since $|V_j|\leq (K\delta)^{j-1}$, \eqref{eq:B0-lower} implies 
\begin{equation} \label{eq:V0-lower}
|V_0|\geq e(B)/|V_j|\geq (2K/D)\delta.
\end{equation}
	Let $V^*_j$ be the set of $y\in V_j$ for which $d_B(y)\ge \frac{|V_0|}{D}$. 
	Let $B^*$ denote the subgraph of $B$ induced by $V_0\cup V^*_j$. Then 
	\begin{equation} \label{eq:B*lower}
	e(B^*)\ge{e(B)-|V_0||V_j|/D}\ge (2K^j/D)\delta^j-(K\delta)(K\delta)^{j-1}/D= K^j\delta^j/D.
	\end{equation}

For each $xy\in E(B^*)$, we have
\[ |\cJ_{xy}|\geq f(j,L)\ge10j^2 D^2M,\]
where the last inequality holds by \eqref{eq:f-facts}. Let $\cJ'_{xy}$ be a subfamily of $\cJ_{xy}$ of size exactly $10j^2D^2M$. Let
\begin{equation} \label{eq:F0-definition}
\cF_0=\bigcup_{xy\in E(B^*)} \cJ'_{xy}.
\end{equation}
Then by \eqref{eq:B*lower}
\begin{equation} \label{eq:F0-lower}
|\cF_0|= e(B^*)\cdot 10j^2D^2M\geq10j^2 D M K^j\delta^j.
\end{equation}
We next obtain $\cF$ from $\cF_0$ through some further cleaning. Initially let $\cF=\cF_0$.
Throughout the process, for each $x\in V_0, y\in V^*_j$
let $\lambda(x,y)$ denote the number of remaining members of $\cF$ that have ends $x,y$. We update
the function $\lambda(x,y)$ automatically after each removal.
Whenever is a vertex $x\in V_0$ such that the number of $y\in V^*_j$ with $\lambda(x,y)\geq DM$ is less than $M$
(which we refer to as {\it $x$ becomes small}), remove all the members of $\cF$ that contain $x$. 
 Similarly, whenever there is a vertex $y\in V^*_j$ such
that the number of $x\in X$ with $\lambda(x,y)\geq DM$ is less than $M$
(which we refer to as {\it $y$ becomes small}), remove all the members of $\cF$ that contains $y$.
Whenever there is a member $P\in \cF$ (viewed as a path from $V_0$ to $V^*_j$)
contains an initial $i$-segment $I$, for some $0\leq i<j$, that is contained is less than $jM(K\delta)^{j-i-1}$ members of $\cF$
we remove all the members of $\cF$ containing $I$. We continue the process until no further removal can be performed.

The number of members of $\cF$ we removed for each $x\in V_0$ that becomes small is at most
\[(|V^*_j|-M) M + M (10j^2D^2 M) \leq 2M(K\delta)^{j-1},\]
for sufficiently large $n$, since $\delta=\omega(1)$. Similarly, 
the number of members of $\cF$ that we removed for each vertex $y\in V^*_j$ that becomes small is at most
\[(|V_0|-M)M +M(10j^2D^2M)\leq 2M(K\delta).\]
So, the total number of members of $\cF$ we removed due to either a vertex in $X$ becoming small or a vertex
in $Y^*$ becoming small is at most 
\[|V_0|\cdot 2M(K\delta)^{j-1} + |V^*_j| \cdot2M (K\delta) \leq 4M(K\delta)^j.\]
The number of members of $\cF$ that we removed due to some initial segment is contained in too few members
is at most 
	\begin{equation*}
	\sum_{i=0}^{j-1}|V_0|(K\delta)^i\cdot jM(K\delta)^{j-i-1}\le j^2M(K\delta)^j.
	\end{equation*}
Combining the above two inequalities, the total number of members of $\cF$ that
we removed is at most
	\begin{equation*}
	(j^2+4)M(K\delta)^j\le 5j^2MK^j\delta^j\le \frac{|\cF_0|}2.
	\end{equation*} 
So in particular, the final $\cF$ is nonempty.

Now, for each $0\leq i\leq j$,  let $A_i$ be the set of vertices in $V_i$ that are contained in members of the final $\cF$.
In particular, note that $A_j\subseteq V^*_j$. 
Let us check that $w,A_0,\ldots, A_j$ and $\cF$ satisfy the five conditions of the lemma.
Condition 1 and condition 2 clearly hold by our discussion so far. 
Condition 3 holds since $A_j\subseteq V^*_j$
and each vertex $y\in V^*_j$ satisfies $d_B(y)\geq \frac{|V_0|}{D}$ and $|V_0|\geq (2K/D)\delta$ by \eqref{eq:V0-lower}.
Conditions 4 and 5 hold due to our cleaning rules. This completes the proof of the lemma.
\end{proof}

\begin{lem}\label{lem:cleaning2}
	Let $G,A_0,\ldots, A_j$ and $\cF$ be as stated in Lemma \ref{lem:cleaning1}. Then
\begin{enumerate}
\item For any $0\le i<j$ and any $u\in A_i$, there exists an $M$-legged spider of height $j-i$ with center $u$ and leaves in $A_j$.
\item Let $F=\{uv:  u\in A_{j-1}, v\in A_j \mbox{ and } \exists P\in \cF, \, uv\in E(P)\}$.
Then $F$ has minimum degree at least $M$.
\end{enumerate}
\end{lem}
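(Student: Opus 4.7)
Both conclusions will follow quickly from the data packaged in Lemma \ref{lem:cleaning1}, supplemented by Lemma \ref{lem:spider-shrinking2} for part 1 and by the definition of light paths for part 2. The only step that is not purely bookkeeping is a counting argument producing distinct second-to-last vertices for each $v\in A_j$ in part 2; I do not foresee a substantive obstacle beyond checking that the arithmetic permitted by the choice $D = 2K^j L f(j-1,L)^2$ in Definition \ref{def:constants} suffices.

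For part 1, I would fix $u\in A_i$, pick any $P=v_0v_1\cdots v_j\in\cF$ with $v_i = u$, and let $I$ denote its initial $i$-segment. Clause (5) of Lemma \ref{lem:cleaning1} asserts that $I$ is contained in at least $jM(K\delta)^{j-i-1}$ members of $\cF$, each of which provides a distinct continuation from $u$ of length $j-i$ ending in $A_j$. Let $\cC$ be the family of these continuations. Apply Lemma \ref{lem:spider-shrinking2} with $x = u$, $h = j-i$, $S = A_j$, selecting the index $h$ in the conclusion so that the ``furthermore'' clause forces the center of the resulting spider to be $u$ itself. This yields a spider of height $j-i$ rooted at $u$ with at least
\[
\frac{|\cC|}{h(K\delta)^{h-1}} \;\ge\; \frac{jM(K\delta)^{j-i-1}}{(j-i)(K\delta)^{j-i-1}} \;=\; \frac{jM}{j-i} \;\ge\; M
\]
legs, all terminating in $A_j$, as required.

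For part 2, the bound $d_F(u)\ge M$ for $u\in A_{j-1}$ is immediate: pick any $P\in\cF$ with $v_{j-1}=u$, apply clause (5) of Lemma \ref{lem:cleaning1} with $i=j-1$ to its initial $(j-1)$-segment to obtain at least $jM$ distinct extensions, and note that each extension produces a distinct neighbor of $u$ in $A_j$ via $F$. For $v\in A_j$, I would invoke clause (4) of Lemma \ref{lem:cleaning1} to pick some $x\in A_0$ such that at least $DM$ members of $\cF$ have ends $x$ and $v$, and, for each such $P$, record its second-to-last vertex $u(P)\in A_{j-1}$. Since $P$ is $j$-admissible, the subpath of $P$ from $x$ to $u(P)$ is $(j-1)$-light, so the definition of lightness caps the number of $(j-1)$-admissible paths from $x$ to any fixed $u$ by $f(j-1,L)$. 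Hence the fibres of the map $P\mapsto u(P)$ have size at most $f(j-1,L)$, and its image has size at least
\[
\frac{DM}{f(j-1,L)} \;=\; 2K^j L f(j-1,L)\,M \;\ge\; M,
\]
each image vertex being a neighbor of $v$ in $F$. This gives $d_F(v)\ge M$ and completes the proof.
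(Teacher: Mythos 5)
Your proof is correct and, up to one cosmetic difference, follows the paper's argument. Part~1 is identical (clause~(5) of Lemma~\ref{lem:cleaning1} followed by Lemma~\ref{lem:spider-shrinking2} with $x=u$, $h=j-i$), and your treatment of $d_F(u)$ for $u\in A_{j-1}$ is just the paper's invocation of Part~1 at $i=j-1$ unfolded. For $d_F(v)$ with $v\in A_j$, the paper applies Lemma~\ref{lem:independent-paths} to the $\ge DM$ paths from clause~(4) to obtain $\ge M$ pairwise internally disjoint ones (hence distinct penultimate vertices), whereas you bound the fibres of $P\mapsto u(P)$ directly by $f(j-1,L)$ via the lightness of the initial $(j-1)$-segment; this is essentially the same mechanism that underlies Lemma~\ref{lem:independent-paths} (cf.\ the $i=1$ case of Lemma~\ref{lem:admissible}), and both routes comfortably clear the threshold $M$ given $D=2K^jLf(j-1,L)^2$.
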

\begin{proof} Fix any $i$ with $0\leq i<j$ and $u\in A_i$. By the definition of $A_i$ there exists $P=v_0v_1\cdots v_j\in \cF$,
where $v_i=u$. Let $I=v_0v_1\cdots v_i$. By condition 5 of Lemma \ref{lem:cleaning1}, $I$ is contained in at least 
$jM(K\delta)^{j-i-1}$ members of $\cF$. In other words, the family $\cQ=\{Q: Q\in A_{i}\times\cdots \times A_j, I\cup Q \in \cF\}$ has size at least $jM(K\delta)^{j-i-1}$. Since each member of $\cQ$ is a path of length $j-i$ fro $u$ to a vertex in $A_j$, 
by Lemma \ref{lem:spider-shrinking2}, there exists a spider of height $j-i$ with center $u$ and leaves in $A_j$ whose number of legs is at least
	\begin{equation*}
	\frac{jM(K\delta)^{j-r-1}}{j(K\delta)^{j-r-1}}=M.
	\end{equation*}
	This proves part 1 of the lemma. Applying part 1 with $i=j-1$, we have $\forall u\in A_{j-1}$, $d_F(u)\ge M$. Let $y\in A_j$. By condition 4 of Lemma \ref{lem:cleaning1}, there exist a vertex $x\in A_0$ such that there are at least $DM$ members of 
$\cF$ that have ends $x,y$. By Lemma \ref{lem:independent-paths}, among these there are at least $DM/[j^2f(j-1,L)^2]\geq M$
of them that are pairwise vertex disjoint outside $\{x,y\}$. In particular, this implies $d_F(y)\ge M$. So part 2 also holds.
\end{proof}

\begin{Definition}
For the rest of the subsection, we write $b=qj+b'$ where $q$ and $b'$ are integers with $1\le b'\le j$. 
\end{Definition}

The next lemma plays an important role in our proof of Lemma \ref{lem:short}. It sets up 
a well-placed $s$-legged spider of height $b$ to be used in building a copy of $t*S^s_{b,k}$.
\begin{lem}\label{lem:b-spider}
Let $G$, $w$, $A_0,\ldots, A_j$, and $\cF$ be as stated in Lemma \ref{lem:cleaning1}.
Let $N=M/D$.
\begin{enumerate}
\item If $q$ is even, then there are $N$-legged spiders $T$ and $T'$ in $G$, both of height $b$ such that the leaf set of $T$ is  contained in $A_b$ and the leaf set of $T'$ is contained in $A_{b-1}$.
\item If $q$ is odd, then there are $N$-legged spiders $T$ and $T'$ in $G$, both of height $b$, such that the leaf set of $T$ is contained in $A_{j-b'}$ and the leaf set of $T'$ is contained in $A_{j-b'+1}$.
\end{enumerate}
\end{lem}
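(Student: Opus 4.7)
The plan is to prove Lemma \ref{lem:b-spider} by induction on $q$, constructing the spider $T$ as the endpoint of $q$ successive zig-zag extensions. The main inductive claim is: for every $0 \le p \le q$, there exists an $N$-legged spider $S_p$ of height $pj + b'$ centered at some $u \in A_0$ whose leaves lie in $A_{b'}$ when $p$ is even and in $A_{j-b'}$ when $p$ is odd; taking $p = q$ yields $T$. The companion $T'$ is built by a parallel induction: its center is the cleaning-lemma vertex $w$, and each of its $N$ legs starts with an edge $wu_\ell$ to a freshly chosen $u_\ell \in A_0$ (possible since $A_0 \subseteq \Gamma_1(w)$); the remainder of each leg is produced by the same zig-zag extension with $b'$ replaced by $b'-1$, which shifts the final layer by one and gives leaves in $A_{b'-1}$ or $A_{j-b'+1}$ as required.

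The base case $p = 0$ is direct: fix $u \in A_0$ and use condition 5 of Lemma \ref{lem:cleaning1} to obtain at least $jM(K\delta)^{j-1}$ members of $\cF$ starting at $u$. Passing to $b'$-prefixes and using the standard tail-collapse bound (each prefix extends to $\le (K\delta)^{j-b'}$ full paths) gives $\ge jM(K\delta)^{b'-1}$ distinct length-$b'$ paths from $u$ into $A_{b'}$; Lemma \ref{lem:spider-shrinking2} then extracts an $N$-legged spider of height $b'$ with center $u$ and leaves in $A_{b'}$. For the inductive step, I extend each leaf $v \in A_r$ of $S_p$ (with $r \in \{b', j-b'\}$) by a length-$j$ zig-zag reaching the alternate layer $A_{r'}$. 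The \emph{forward stage} is an $M$-legged spider centered at $v$ of height $j-r$ with leaves in $A_j$, furnished by Lemma \ref{lem:cleaning2}(1) (or its immediate generalization via condition 5 and Lemma \ref{lem:spider-shrinking2}). The \emph{backward stage} is the delicate part: for each leaf $y \in A_j$ of the forward spider, condition 4 of Lemma \ref{lem:cleaning1} supplies a partner $x \in A_0$ with at least $DM$ members of $\cF$ joining $y$ and $x$, and Lemma \ref{lem:independent-paths} then produces $M$ pairwise internally disjoint length-$j$ paths from $y$ to $x$ (the constant $D = 2K^j L f(j-1,L)^2$ is chosen precisely so that the loss factor $j^2 f(j-1,L)^2$ in that lemma is absorbed). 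The crucial observation is that the $(j-r')$-prefixes of these paths, read from $y$, are themselves internally disjoint and terminate at pairwise distinct vertices of $A_{r'}$, because the internal vertices of internally disjoint paths are disjoint; this yields an $M$-legged spider centered at $y$ with leaves in $A_{r'}$.

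Concatenating one forward leg with one backward leg produces a length-$j$ extension from $v$ to $A_{r'}$. Iterating over the $N$ leaves of $S_p$ greedily and maintaining vertex-disjointness by a counting bound (at any step at most $O(Nb)$ vertices are occupied while each local spider offers $M$ internally disjoint alternatives), the generous slack $M \gg DNb$---which holds because $N = M/D$ and $D \gg b$ for $L$ large---guarantees a valid choice with a fresh endpoint in $A_{r'}$ at every step, producing $S_{p+1}$ and, eventually, $T$. The main technical obstacle is exactly the backward stage: condition 4 supplies paths only to $A_0$, and a naive pruning of those paths to a shorter prefix would lose a factor $(K\delta)^{j-r'}$ that diverges with $\delta$ and cannot be absorbed by the constants. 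The internal-disjointness observation bypasses this loss entirely and is the essential new idea required to handle short heavy paths in this setting.
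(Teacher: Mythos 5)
Your construction is genuinely different from the paper's. You build the spider inductively, one zig-zag of length $j$ at a time, with the forward stage supplied by Lemma~\ref{lem:cleaning2}(1) (hence condition~5 of Lemma~\ref{lem:cleaning1}) and the backward stage by condition~4 together with Lemma~\ref{lem:independent-paths}. The paper instead defines a single auxiliary bipartite graph $B$ on $A_0\cup A_j$ with $xy\in E(B)$ iff at least $DM$ members of $\cF$ join $x,y$, grows an $N$-legged spider $R$ of height $q+1$ in $B$ (possible since condition~4 gives $\delta(B)\geq M\geq(q+1)N$), lifts each edge of $R$ to a length-$j$ path using Lemma~\ref{lem:independent-paths}, and finally truncates each leg at distance $b$. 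This one-shot construction buys two things your step-by-step version does not: it uses only condition~4 (never condition~5 or Lemma~\ref{lem:cleaning2}), and it automatically handles the boundary layers because the leaves of the truncated spider are distinct for the simple structural reason that the legs of the lifted spider are pairwise internally disjoint.

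This last point exposes a genuine gap in your proposal. Your backward stage relies on the claim that the $(j-r')$-prefixes of $M$ internally disjoint $(y,x)$-paths ``terminate at pairwise distinct vertices of $A_{r'}$, because the internal vertices of internally disjoint paths are disjoint.'' This is only correct when $0<r'<j$, i.e.\ when the prefix ends at an \emph{interior} vertex of the path. When $b'=j$ the step from an even $p$ to $p+1$ has $r'=j-b'=0$: the prefix is the entire path and terminates at the shared endpoint $x\in A_0$, so all $M$ prefixes end at the \emph{same} vertex and you do not get an $M$-legged spider centered at $y$ at all, only a theta-graph. This case occurs whenever $q\geq1$ and $b'=j$ (equivalently $b\in\{2j,3j,\dots\}$), and it also resurfaces in your construction of $T'$ after replacing $b'$ by $b'-1$ (e.g.\ when $b'=1$). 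The fix is available but you do not invoke it: condition~4 gives at least $M$ \emph{distinct} choices of $x\in A_0$ for each $y\in A_j$, so one should greedily select fresh partners $x$ rather than many paths to a single $x$ and then argue about pairwise disjointness of those paths directly. The same greedy freshness bookkeeping also has to be stated more carefully for $T'$, where the zig-zags emanate from $N$ different starting vertices $u_\ell\in A_0$ rather than from a single center; the paper handles this cleanly by finding $N$ vertex-disjoint length-$(q+1)$ paths in $B$ that avoid $w$ before lifting. Finally, a minor slip: the slack inequality you write as $M\gg DNb$ is false since $DN=M$; what you need (and what holds for $L$ large) is $M\gg Nb$, i.e.\ $D\gg b$.
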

\begin{proof} 
Let $B$ be a bipartite graph with parts $A_0$ and $A_j$ such that $\forall x\in A_0, y\in A_j$ $xy\in E(B)$  if and only if at least $DM$ members of $\cF$ have ends $x,y$.
By Lemma \ref{lem:cleaning1} condition 4, $B$ has minimum degree at least $M$. 
By Lemma \ref{lem:independent-paths}, $\forall xy\in E(B)$, there exist at least $DM/[j^2f(j-1,L)^2]\geq M$ internally disjoint
members of $\cF$ with ends $x,y$.
	
	Fix a vertex $u\in A_0$. Since $\delta(B)\geq M = DN \geq (q+1)N$, where the last inequality follows from Definition \ref{def:constants}, we can greedily grow an $N$-legged spider $R$ in $B$ that has  center $u$ and height $q+1$.
Since $\forall xy\in E(B)$ there are $M$ internally disjoint members of $\cF$ with ends $x,y$,
we can replace each edge $ab$ of $R$ with a member of $\cF$ with ends $a,b$
so that the resulting graph is an $N$-legged spider $S$ of height $(q+1)j$ in $G$. 
Let $A$ be the set of vertices in $S$ that are at distance $b=qj+b'$ from $u$ in $S$.
It is easy to see from the definition of $S$ that if $q$ is even then $A\subseteq A_{b'}$
and that if $q$ is odd then $A\subseteq A_{j-b'}$. Let $T$ be the sub-spider of $S$ with center $u$ and leaf set $A$.
Then $T$ satisfies the first halves of statements 1 and 2.
	
Now, fix a subset $A'_0\subseteq A_0$ of size $N$,
Since $B$ has minimum degree at least $M\geq (q+2)N+1$, in $B$ we can find $N$ disjoint paths of length $q+1$,
$Q_1,\ldots, Q_N$, avoiding $w$, such that $\forall i\in [N]$, $Q_i$ starts from a vertex  $x_i\in A'_0$ .
By a similar reason as in the previous paragraph, we can replace the edges in $\bigcup_{i=1}^N Q_i$ by members of  $\cF$ that avoid $w$ such that for each $i\in [N]$, $Q_i$ is turned into a path $P_i$ of length $(q+1)j$ in $G$ that still avoids $w$ and
that $P_1,\dots, P_N$ are vertex disjoint. Let $S'=\bigcup_{i=1}^N P_i\cup \{wx_1,\ldots, wx_N\}$. Then $S'$ is an
$N$-legged spider in $G$ with center $w$ and height $(q+1)j+1$. Let $A'$ be the set of vertices in $S'$ that are 
distance $b=qj+b'$ from $w$ in $S'$. It is easy to see by the definition of $S'$ that if $q$ is even then $A'\subseteq A_{b'-1}$
and that if $q$ is odd then $A'\subseteq A_{j-b'+1}$. Let $T'$ be the sub-spider of $S'$ with center $w$ and leaf set $A'$.
Then $T'$ satisfies the second halves of statement 1 and 2. 
\end{proof}

\def\tm{\widetilde m}
\begin{lem}\label{lem:additional-spiders}
Let $G$, $w$, $A_0,\ldots, A_j$ and $\cF$ be as stated in Lemma \ref{lem:cleaning1}. 
Let $m,m'$ be positive integers such that $m'\leq \frac{m}{2D}$ and $m\leq \frac{M}{2sk}$.
Let $0\leq r\le j$ and $U \subseteq A_r$ be a subset of size $m$. Let $W$ be a vertex set such that $W\cap U=\emptyset$
and	$|W|\leq M/2$. If $p:=k+r-2j$ is non-negative and even, then there exists an $m'$-legged spider $T'$ with height $k$ and leaf set $U'\subseteq U$ such that $V(T')\setminus U'$ is disjoint from $W\cup U$.
\end{lem}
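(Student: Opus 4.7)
The plan is to find a vertex $c$ (which will serve as the center of $T'$) such that for at least $m'$ vertices $u \in U$ there exist length-$k$ paths from $c$ to $u$ satisfying the required disjointness and avoidance properties. We build such paths by a construction analogous to that used in Lemma \ref{lem:b-spider}, but anchored at the prescribed leaves in $U$ rather than at a prescribed center.

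For each $u \in U \subseteq A_r$, I would first invoke Lemma \ref{lem:cleaning2}(1) (or its symmetric version if $r = j$) to get an $M$-legged spider of height $j-r$ with center $u$ and leaves in $A_j$. Each such leaf then serves as the starting point for a ``bouncing'' extension between $A_0$ and $A_j$: at each vertex of $A_j$ the bipartite structure provided by condition~4 of Lemma \ref{lem:cleaning1} (combined with Lemma \ref{lem:independent-paths}) gives access to at least $M$ internally disjoint $\mathcal{F}$-paths of length $j$ back to $A_0$, and symmetrically from $A_0$ forward to $A_j$. Writing $p = aj + p'$ with $0 \leq p' < j$, I alternate these bounces exactly $a$ times and then append one partial $\mathcal{F}$-path of length $p'$ (or of length $j-p'$, depending on which side of the $A_0$--$A_j$ alternation we sit on after the $a$-th bounce). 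The hypothesis that $p$ is even controls the parity of $a$, which in turn forces the final endpoint to lie in an even-indexed level $A_{r'}$ --- for instance $r'=p$ when $p\le j$. In particular all legs, regardless of which $u\in U$ they start from, land in the same prescribed level $A_{r'}$.

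Aggregating over $u \in U$ then yields a bipartite incidence graph $H$ with parts $U$ and $A_{r'}$; a direct count from the multiplicities at each step shows that $H$ is dense enough for Lemma \ref{lem:common-degree} (or plain averaging) to produce a single vertex $c \in A_{r'}$ that is adjacent in $H$ to at least $2Dm'$ vertices of $U$. For each such $u$, the $M$-fold multiplicity of the bouncing construction together with Lemma \ref{lem:independent-paths} lets me greedily extract a concrete length-$k$ path from $c$ to $u$ internally disjoint from previously-chosen legs and from $W \cup (U \setminus U')$. The budgets $m' \leq m/(2D)$, $m \leq M/(2sk)$, and $|W| \leq M/2$ are calibrated precisely so that enough margin remains after all the pruning steps, and the resulting $m'$ paths form the claimed spider $T'$.

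The main obstacle is the parity and direction bookkeeping that makes all legs converge to the \emph{same} vertex $c$. Were $p$ odd, the same construction would force $r'$ to be odd, and the ``consistent direction'' across bounces would land different $u$'s in different $A_i$'s, making it impossible to use any single $c$ as a common center; the hypothesis that $p$ is even is precisely the condition needed to rule this out. A secondary complication is the case split $p \le j$ versus $p > j$: in the latter range the partial segment appears on the ``returning'' side of an $\mathcal{F}$-path rather than the ``outgoing'' side, and one must iterate the bouncing roughly $\lfloor p/(2j)\rfloor$ extra times; but in each regime the template is identical to that used in Lemma \ref{lem:b-spider} and the verification is routine.
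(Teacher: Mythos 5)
Your template---a spider of height $j-r$ from each $u\in U$ into $A_j$, then extend all the legs and make them converge on a common center---has the right shape, but it misses the one ingredient that makes the convergence work, and the bookkeeping around the bounce does not close.

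The critical gap is the density of your incidence graph $H$. Condition~4 of Lemma~\ref{lem:cleaning1} and Lemma~\ref{lem:cleaning2}(1) give only \emph{multiplicity} (at least $M$ continuations available at each bounce) but not \emph{density}: the endpoints reachable from distinct $u$'s could in principle be pairwise disjoint across different $u$'s, in which case $H$ has maximum degree one in $A_{r'}$ and Lemma~\ref{lem:common-degree} yields nothing. The paper sidesteps this by making the final leg a heavy $j$-path (not an $\cF$-path) landing in the set $V_0\subseteq\Gamma_1(w)$, and invoking condition~3 of Lemma~\ref{lem:cleaning1}, which is a genuine density statement: every $z\in A_j$ is joined by heavy $j$-paths to at least a $\frac1D$-fraction of $V_0$. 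That density, fed through Lemma~\ref{lem:common-degree}, is what produces the common center $x$. Your proposal never uses condition~3 at all, and without it the averaging step has nothing to average. Separately, your length accounting does not add to $k$: with $p=aj+p'$, the total length from $u$ is $(j-r)+aj+p'=(j-r)+p=k-j$, so a final $j$-segment is missing---and in the paper that missing segment is precisely the heavy $j$-path that carries the density. The parity reasoning is also off (for $j=3$, $p=4$ you get $a=1$, and a partial segment of length $p'$ from $A_j$ lands at level $j-p'$, not $p'$). The paper avoids all of this by bouncing in the bipartite graph $F$ on $A_{j-1}\cup A_j$ from Lemma~\ref{lem:cleaning2}(2) with unit-length steps: since $F$ is bipartite and $p$ is even, the walk of length $p$ starting in $A_j$ trivially returns to $A_j$, with no decomposition of $p$ and no case split on the size of $p$ relative to $j$. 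The three segments then have lengths $j-r$, $p$, and $j$, which sum to $k$.
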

\begin{proof} Suppose $U=\{u_1,\ldots, u_m\}$.
	Since $U\subseteq A_r$, by Lemma \ref{lem:cleaning2} statement 1, for each $i\in [m]$, there exists an $M$-legged spider of height $j-r$ with center $u_i$ and leaves in $A_j$. Since $M\geq km+|W|\geq (j-r+1)m+|W|$, by a greedy process, 
we can find a collection of vertex disjoint paths $Q_1,\ldots, Q_m$, where for each $i\in [m]$, $Q_i$ is a path
of length $j-r$ joining $u_i$ to a vertex $y_i$ in $A_j$  that avoids the set $W$.
Let \[Y=\{y_1,\ldots, y_m\}\subseteq A_j.\]
 By Lemma \ref{lem:cleaning2} statement 2, the graph 
 \[F=\{ab: a\in A_{j-1}, b\in A_j \mbox{ and } 
 \exists P\in \cF, ab\in E(P)\}\]
  has minimum degree at least $M$. 
 Using a greedy process we can find in $F$  a collection of 
 vertex disjoint paths $R_1,\ldots, R_m$, where for each $i\in [m]$, $R_i$ is a path in $F$ of length $p$ that 
 joins $y_i$ to some vertex $z_i$ and avoids the set $(W\cup \bigcup_{\ell=1}^m V(Q_\ell) )\setminus \{y_i\}$. 
 For each $i\in [m]$, since $y_i\in A_j$ and $p$ is even, $z_i\in A_j$ as well.
 Now $\{Q_i\cup R_i:  i\in [m] \}$ is family of $m$ disjoint paths of length $j-r+p$ that avoids $W$.
 
 Let  $Z=\{z_1,\ldots, z_m\}$. Since $Z\subseteq A_j$, by Lemma \ref{lem:cleaning1} condition 3, 
there exists a set $V_0$ with $A_0\subseteq V_0\subseteq \Gamma_1(w)\setminus A_j$ such that $|V_0|\geq (2K/D)\delta$ and
for each $i\in [m]$, there are at least $|V_0|/D$
 many $x\in V_0$ such that $x,z_i$ are the ends of a $j$-heavy path in $G$. 
 Let $B$ be a bipartite graph with parts $V_0$ and $Z$
 such that $\forall x\in V_0, z\in Z$, $xz\in E(B)$ if and only if $x,z$ are the ends of a $j$-heavy path in $G$. Then
 $e(B)\geq |V_0||Z|/D$.  Since $|Z|/D=m/D\geq 2m'$. So, by Lemma \ref{lem:common-degree}
there exists an $m'$-set $Z'\subseteq Z$ such that 
\[|N^*_B(Z')|\geq \left(\frac{1}{2D}\right)^{m'} |V_0|\geq \left(\frac{1}{2D}\right)^M |V_0|.\]

Since $|V_0|\geq (K/2D)\delta$, while $\delta=\omega(1)$, 
for sufficiently large $n$, we have $|N^*_B(Z')|> |\bigcup_{i=1}^m V(Q_i\cup R_i) \cup W|$.
So, there exists a vertex $x\in V_0\setminus (\bigcup_{i=1}^m V(Q_i\cup R_i) \cup W)$
that is joined to all of $Z'$ in $B$.  Without loss of generality, suppose $Z'=\{z_1,\ldots, z_{m'}\}$.  For each $i\in [m']$,
there exists a $j$-heavy path with ends $x$ and $z_i$. In particular, by Lemma \ref{lem:independent-paths}, there are at least $\frac{f(j,L)}{j^2f(j-1,L)^2}\ge\frac{10j^4D^2 M}{j^2D}\ge M$ internally disjoint paths of length $j$ between $x$ and $z_i$, where the first inequality follows by Definition \ref{def:constants} and \eqref{eq:f-facts}. As $M>m'j+|W|$, we can find paths $P_1,\ldots, P_{m'}$, where $\forall i\in [m']$, $P_i$ is
a path of length $j$ joining $x$ to $z_i$ such that $T':=\bigcup_{i=1}^{m'} (P_i\cup Q_i\cup R_i)$ is an $m'$-legged spider
of height $k$ with center $x$ and leaf set $\{u_1,\dots, u_{m'}\}$ and such that $T'$ avoids  $(U\setminus \{u_1,\dots, u_{m'}\})\cup W$.
The lemma holds for the above-defined $T'$ and $U'=\{u_1,\ldots, u_{m'}\}$.
\end{proof}

Now, we are ready to prove Lemma \ref{lem:short}. 

\medskip

\noindent{\bf Proof of Lemma \ref{lem:short}:}
Suppose that the number of $j$-heavy paths in $G$ is at least $\frac{(j+1)^{j+1}}{L}n\delta^j$. 
		Let $w$, $A_0,\ldots, A_j$ and $\cF$ are obtained by Lemma \ref{lem:cleaning1}. 
Our first step is to find an appropriate value of $r$ to apply Lemma \ref{lem:additional-spiders} to.
 Recall that $b=qj+b'$. Let 
		\begin{equation*}
		r:=\left\{
		\begin{array}{lr}
		b', &\mbox{ if $q$ is even and $k+b'-2j$ is even}  \\
		b'-1, & \mbox{ if $q$ is even and $k+b'-2j$ is odd}\\
		j-b', & \mbox{ if $q$ is odd and $k+(j-b')-2j$ is even} \\
		j-b'+1,& \mbox{ if $q$ is odd and $k+(j-b')-2j$ is odd}
		\end{array}
		\right.
		\end{equation*}
As $1\le b'\le j$, we have $0\le r\le j$. 

Let $p=k+r-2j$. By the definitions of $p$ and $r$, it is easy to see $p$ is even. 
We claim that $p$ is non-negative. To prove this, it is enough to show that $k+b'-2j\ge0$ when $q$ is even, and that $k+(j-b')-2j\ge0$ when $q$ is odd. First assume that $q$ is even. If $q=0$, then $b=b'$ and $k+b'-2j=k+b-2j\ge0$ where the inequality holds by our assumption $j\le \frac{k+b}{2}$; if $q\ge2$, then $b\ge 2j+b'$ and thus $k+b'-2j\ge b+b'-2j\ge 2b'>0$. Now assume $q$ is odd. Then we have that $q\ge1$ and thus $b\ge j+b'$. It follows that $k+(j-b')-2j=k-(j+b')\ge b-(j+b')\ge0$. Hence $p$ is nonnegative.

Now, let $m_1=M/D$ and for $i=2,\ldots, s$, let $m_i=m_{i-1}/(2D)$. Using the definition of $D$, it is easy
to check that $\forall i\in [s], m_i\leq M/(2sk)$.
By Lemma \ref{lem:b-spider}, there exists an $m_1$-legged spider $T_1$ with height $b$ and leaf set $U_1\subseteq A_{r}$. 
The idea of the rest of the proof is to apply Lemma \ref{lem:additional-spiders} $s-1$ times. Initially let $W=V(T_1)\setminus U_1$.
Since $p=k+r-2j$ is non-negative and even, applying Lemma \ref{lem:additional-spiders} with $m_1$ and $m_2$ playing the roles of
$m$ and $m'$ respectively and  $U_1$ playing the role of $U$, we can find an $m_2$-legged spider $T_2$ with height $k$ and leaf set $U_2\subseteq U_1$ such that $V(T_2)\setminus U_2$ is disjoint from $W\cup U_1$. 
Now, we add $V(T_2) \setminus U_2$ to $W$. Next, 
applying Lemma \ref{lem:additional-spiders} with $m_2,m_3$ playing the roles of $m$ and $m'$ respectively and $U_2$ playing the role of $U$,  we can find an $m_3$-legged spider $T_3$ with height $k$ and leaf set $U_3\subseteq U_2$, such that $V(T_3)\setminus U_3$ is disjoint from $W\cup U_2$. Now, we add $V(T_3)\setminus U_3$ to $W$. We continue like this.
It is easy to check that we can carry out the process for at least $s-1$ steps to find $T_2,\ldots, T_s$. Indeed, within
the first $s-1$ steps $W$ has size at most $km_1+km_2+\cdots+km_{s-1}<2km_1=2kM/D<M/2$.
This together with the definitions of $m_1,\ldots, m_s$ ensures that the conditions of Lemma \ref{lem:additional-spiders} are satisfied. But now $\bigcup_{i=1}^s T_i$ forms a copy of $t*S^s_{b,k}$ in $G$, a contradiction. This completes our proof of Lemma \ref{lem:short}.
\qed

\subsubsection{Proof of Lemma \ref{lem:heavy-paths}}

Now we are in a position to prove Lemma \ref{lem:heavy-paths}.

\noindent\textbf{Proof of Lemma \ref{lem:heavy-paths}:} By Lemmas \ref{lem:long} and \ref{lem:short}, for any $2\le j\le k$, the number of $j$-heavy paths in $G$ is at most $\max\{\frac{n\delta^j}{L},\frac{(j+1)^{j+1}}{L}n\delta^j\}=\frac{(j+1)^{j+1}}{L}n\delta^j$. This completes the proof.\qed


\section{Concluding remarks} \label{sec:more-exponents}
In \cite{KKL}, Kang, Kim and Liu extended the definition of balanced rooted trees to that of a balanced rooted bipartite graphs as follows. Let $F$ be a bipartite graph and $R$ a proper subset of $V(F)$ called the set of {\it roots}. For each nonempty
set $S\subseteq V(F)$, let $\rho_F(S)=\frac{e_S}{|S|}$, where $e_S$ is the number of edges in $G$ with at least one end in $S$.
Let $\rho(F)=\rho_F(V(F)\setminus R)$. We say that $(F,R)$ is {\it balanced} if $\rho_F(S)\geq \rho(F)$ for every nonempty subset $S\subseteq V(F)\setminus R$.  A real number $r\in(1, 2)$ is called \emph{balancedly realizable} if there is a connected bipartite
graph $F$ and a set $R\subseteq V(F)$ such that $(F,R)$ is balanced with $\rho_F= \frac{1}{2-r}$ and that
there is a positive integer $t_0$ such that for all integers $t\ge t_0$, $\ex(n,t*F)=\Theta(n^r)$ holds. By definition, a balancedly realizable number is a Tur\'an exponent. Using a result of Erd\H{o}s and Simonovits \cite{cube}, Kang, Kim and Liu \cite{KKL} proved the following.

\begin{lem}[\cite{KKL}] \label{lem:KKL}
	Let $a<b$ be two integers. If $2-\frac{a}{b}$ is balancedly realizable, then $2-\frac{a}{a+b}$ is also balancedly realizable.
\end{lem}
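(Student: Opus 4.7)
The plan is to extend $F$ by a single new vertex to form a balanced rooted bipartite graph $F'$ with density exactly one larger than that of $F$, and then to combine the Bukh--Conlon lower bound with an Erd\H os--Simonovits-style upper bound bootstrap.

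First, let $(F,R)$ be a balanced rooted connected bipartite graph witnessing that $2-\frac{a}{b}$ is balancedly realizable, so $\rho(F)=b/a$ and $\ex(n,t*F)=\Theta(n^{2-a/b})$ for every sufficiently large $t$. Assuming the non-root set $V(F)\setminus R$ lies in a single part of the bipartition of $F$ --- a condition that holds in the standard realizers (for example, spiders with all legs of the same parity and roots equal to the leaves) and which one arranges as a preprocessing step --- I would form $F'$ by adjoining a single new vertex $v^*$ in the opposite part, joining $v^*$ to every vertex of $V(F)\setminus R$, and setting $R'=R\cup\{v^*\}$. Then $F'$ is connected bipartite and $V(F')\setminus R'=V(F)\setminus R$. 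For any non-empty $S\subseteq V(F')\setminus R'$, the new edges from $v^*$ contribute exactly $|S|$ extra edges touching $S$, so $\rho_{F'}(S)=\rho_F(S)+1$. Hence $\rho(F')=\rho(F)+1=\frac{a+b}{a}$, balancedness of $(F,R)$ transfers to $(F',R')$, and the target exponent $2-\frac{a}{a+b}=2-1/\rho(F')$ is correctly matched.

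For the lower bound, I would apply Theorem~\ref{BC-theorem} (in its extension to balanced rooted bipartite graphs, as used in \cite{KKL}) to $(F',R')$: for every sufficiently large $t$, $\ex(n,t*F')=\Omega(n^{2-a/(a+b)})$. For the upper bound, the key structural observation is that $t*F'=(t*F)^+$, where for a bipartite graph $H$ with one distinguished part $B$, $H^+$ denotes $H$ together with a new vertex joined to all of $B$; here $B$ is the part of $t*F$ that contains all of its non-root vertices. The Erd\H os--Simonovits bootstrap from \cite{cube} upgrades a bound of the form $\ex(n,H)=O(n^{2-\alpha})$ to $\ex(n,H^+)=O(n^{2-\alpha/(1+\alpha)})$; applying it to $H=t*F$ with $\alpha=a/b$ yields $2-\alpha/(1+\alpha)=2-a/(a+b)$, giving $\ex(n,t*F')=O(n^{2-a/(a+b)})$. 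Combined with the lower bound, this shows $(F',R')$ witnesses that $2-\frac{a}{a+b}$ is balancedly realizable.

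The main obstacle I anticipate is the structural preprocessing step: if the hypothesized realizer $(F,R)$ has non-root vertices on both parts of $F$'s bipartition, then adjoining a single vertex adjacent to every non-root vertex destroys bipartiteness, and a symmetric two-vertex construction would overshoot, yielding exponent $2-a/(2a+b)$ after one application of the bootstrap. Overcoming this requires producing an alternative balanced rooted bipartite realizer of $2-\frac{a}{b}$ with all non-root vertices on one side of the bipartition --- for example by subdividing selected edges, duplicating a vertex, or re-choosing which vertices are declared roots --- while preserving both the density and the asymptotics of $\ex(n,t*F)$. Once this one-sided form of $(F,R)$ is secured, the construction and the two cited results combine directly to give the lemma.
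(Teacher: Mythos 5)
The paper itself does not supply a proof of this lemma: it is stated as an import from~\cite{KKL}, with the parenthetical remark that Kang, Kim, and Liu prove it ``using a result of Erd\H{o}s and Simonovits~\cite{cube}.'' So there is no in-paper argument to match your proposal against; what follows is an assessment of the proposal on its own terms.

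Your framework is the right one --- the relevant ES result (one vertex joined to a whole colour class sends $\alpha$ to $\alpha/(1+\alpha)$), the density/balancedness bookkeeping under the ``join a new root to every non-root'' operation, and the Bukh--Conlon lower bound for the new rooted graph all fit together, and the arithmetic $\rho(F')=\rho(F)+1$, $\alpha/(1+\alpha)=a/(a+b)$ is correct. However, the step you flag as an ``obstacle'' is in fact a genuine gap, not a preprocessing formality. The construction only makes sense if $V(F)\setminus R$ lies in a single side of the bipartition of $F$, and this simply fails for the realizers one actually needs: for a path $P_k$ of length $k\geq 3$ rooted at its endpoints (the theta-graph realizer of $2-(k-1)/k$), and for the spiders $S^s_{b,k}$ rooted at their leaves (which is exactly the input to Corollary~\ref{cor:dense1}), the non-root vertices alternate between the two colour classes. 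Your suggested repairs do not obviously resolve this. Moving vertices into or out of $R$ changes $\rho_F(S)$ for many $S$; in the smallest interesting example, the $2$-legged spider with legs $1$ and $2$ rooted at its leaves is balanced with $\rho=3/2$, but re-rooting so that the non-roots fall on one side (roots $=$ the centre's neighbours) destroys balancedness (the far leaf alone has local density $1<3/2$). Subdividing every edge forces $\rho=2$ and loses all generality. Adding two new roots, one per side, does keep $F'$ bipartite and does shift the density by exactly $1$, but, as you correctly note, iterating the ES bootstrap twice only yields the weaker upper bound $O(n^{2-a/(2a+b)})$, which does not meet the lower bound $\Omega(n^{2-a/(a+b)})$.

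There is also a smaller imprecision worth noting: even under the one-sided assumption, $t*F'$ is in general only a subgraph of $(t*F)^+$ (since $v^*$ is joined to the non-roots but not to any roots that happen to sit in that colour class); this is fine for the upper bound because $\ex(n,t*F')\leq\ex(n,(t*F)^+)$, but the equality ``$t*F'=(t*F)^+$'' should be stated as a containment. In short, your proposal reconstructs the intended mechanism but leaves open the essential structural reduction to a one-sided realizer; that reduction (or an alternative route around it) is precisely what a complete proof must supply, and none of the candidate fixes you list can be invoked as stated.
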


\noindent{\bf Proof of Corollary \ref{cor:dense1} and Corollary \ref{cor:dense2}:}
By Theorem \ref{BC-theorem} and Theorem \ref{thm:subdivision-bound}, for any positive integers $p,k,b$ with $k\ge b$, $1+\frac{p}{kp+b}$ is balancedly realizable. Corollay \ref{cor:dense1} follows by applying Lemma \ref{lem:KKL} repeatedly. 

Now, suppose that $q=sp+p'$ where $s$ is an positive integer and $0\le p'\le\sqrt{p}$. Since it is known that $2-\frac{1}{s}$ is a Tur\'an exponent for all any integer $s\ge 2$, we may assume $p'>0$. Now, as $0<p'\le\sqrt{p}$, there exists integers $k$ and $b$ such that $p=kp'+b$ and $k\ge p'-1$ and $p'\ge b\ge1$. Then $2-\frac{p}{q}=2-\frac{kp'+b}{s(kp'+b)+p'}$. By Corollary \ref{cor:dense1}, it follows that $2-\frac{p}{q}$ is a Tur\'an exponent.
\qed

\medskip

Finally, even though we obtained all the Tur\'an exponents that Janzer's conjecture (Conjecture \ref{conj:Janzer}) would give,
it would still be very interesting to resolve his conjecture in the full. While the rational exponent conjecture is a central problem in the study of bipartite Tur\'an problems, the ultimate goal is to understand the Tur\'an function for bipartite graphs better. In particular,
while tools such as dependent random choice have found success in the denser end of the spectrum for bipartite graphs,
it would be very interesting to develop more tools for the sparser end of the spectrum. The recent active study of the Tur\'an problem for subdivisions is a step in that direction. It will be very interesting to continue explore problems of such nature.

\appendix 

\section{Proof of Lemma \ref{lem:heavy-spiders}}

As mentioned in the paper, the proof of Lemma \ref{lem:heavy-spiders} follows from similar arguments used in the main proof of \cite{Janzer2}. We give a sketch of the proof to make our paper self-contained.
We split the proof of Lemma \ref{lem:heavy-spiders} into two lemmas: Lemma \ref{lem : j1=j2=1} and Lemma \ref{lem:bk-spiders-atmost1}.

\medskip
\begin{lem}\label{lem : j1=j2=1}
Let $G$ be a $t*S^s_{b,k}$-free $K$-almost-regular graph on $n$ vertices with minimum degree $\delta=\omega(1)$. Let $1\le j_1\le b$ and $1\le j_2,\ldots,j_s\le k$ be integers and suppose that $j_i=1$ holds for at least two values of $i$. Then the number of heavy spiders with
	length vector $(j_1,\ldots,j_s)$ is at most $\frac{27K^{j-2}}{L}n\delta^j$, where $j=j_1+\cdots+j_s$.
\end{lem}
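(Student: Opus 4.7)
The plan is to argue by contradiction. Suppose the number of heavy spiders in $G$ with length vector $(j_1,\ldots,j_s)$ exceeds $\frac{27K^{j-2}}{L}n\delta^j$. Without loss of generality, $j_1 = j_2 = 1$. The goal is to construct a copy of $t*S^s_{b,k}$ in $G$, contradicting the hypothesis that $G$ is $t*S^s_{b,k}$-free.

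The proof begins with a random coloring cleaning step, analogous in spirit to the one used in Lemma \ref{lem:cleaning1}. Assign each vertex of $G$ one of three colors independently and uniformly at random; a heavy spider is \emph{good} if its center, its length-$1$ leaf $x_1$, and its length-$1$ leaf $x_2$ receive three prescribed distinct colors. Since a heavy spider is good with probability $3^{-3}=1/27$, by fixing a suitable coloring one may assume at least $\frac{K^{j-2}}{L}n\delta^j$ heavy spiders respect the prescribed role partition; this accounts for the factor $27$. Next, by averaging over the possible leaf vectors compatible with the fixed coloring, one identifies a heavy leaf vector $(x_1,\ldots,x_s)$ supporting at least $f(j,L)$ admissible spiders with length vector $(1,1,j_3,\ldots,j_s)$. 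Invoking Lemma \ref{lem:independent-spiders} together with Proposition \ref{prop}, I extract at least $f(j,L)/[j^2 f(j-1,L)^2]\ge 2L^2$ pairwise internally disjoint admissible spiders sharing this leaf vector. Each has a distinct center $u_\ell\in N(x_1)\cap N(x_2)$ and carries internally disjoint admissible paths of lengths $j_3,\ldots,j_s$ from $u_\ell$ to $x_3,\ldots,x_s$.

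From this supply of $N\ge 2L^2\gg tsk$ internally disjoint short spiders, I construct a copy of $t*S^s_{b,k}$. Pick any $t$ of them, indexed by $\ell\in[t]$. For each $\ell$, replace the edge $u_\ell x_1$ by a newly built path of length $b$ ending at $x_1$, replace the edge $u_\ell x_2$ by a newly built path of length $k$ ending at $x_2$, and (when $j_i<k$) extend the $(u_\ell,x_i)$-path to length $k$. Each new internal vertex is chosen greedily from a neighborhood of size at least $\delta-O(tsk)=\Omega(\delta)$; since $\delta=\omega(1)$ and the set of previously used vertices has constant size, the extensions can be carried out so that all the resulting $t$ copies are pairwise internally disjoint. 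Their union is then a copy of $t*S^s_{b,k}$ in $G$, contradicting the hypothesis.

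The main technical obstacle is the bookkeeping in the greedy extension: the $ts$ legs must simultaneously reach their target lengths while preserving internal disjointness across the $t$ copies and avoiding the interiors of the original admissible spiders. The hypothesis $j_1=j_2=1$ is what makes the construction tractable: both length-$1$ legs are single edges, so they impose no admissibility constraint beyond adjacency and can be replaced by paths of the desired length through the large neighborhoods of $x_1$ and $x_2$. This structural freedom is absent in the complementary case (with at most one length-$1$ leg), which is handled separately in Lemma \ref{lem:bk-spiders-atmost1}.
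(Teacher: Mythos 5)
Your approach diverges fundamentally from the paper's, and it has a genuine gap in the construction step. The paper's proof of this lemma is a short \emph{reduction}: it observes that if a spider $S$ with legs of length $1$ from the center $w$ to $x_{i_1}$ and $x_{i_2}$ is heavy, then there are $\ge f(j,L)$ admissible spiders with the same leaf vector, which after Lemma \ref{lem:independent-spiders} and Proposition \ref{prop} yield $\ge f(2,L)$ internally disjoint such spiders, hence $\ge f(2,L)$ common neighbors of $x_{i_1},x_{i_2}$; so $x_{i_1}wx_{i_2}$ is a $2$-heavy path. Since at most $(K\delta)^{j-2}$ heavy spiders project to the same $2$-heavy path, the count is bounded by Lemma \ref{lem:heavy-paths}, whose $j=2$ case directly supplies the factor $27=(2+1)^{2+1}$. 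You instead try to derive a contradiction by directly \emph{building} a copy of $t*S^s_{b,k}$, which is precisely the hard content of Lemma \ref{lem:heavy-paths} that the paper is reusing rather than reproving.

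The decisive gap is in the greedy leg-extension. You want to ``replace the edge $u_\ell x_1$ by a newly built path of length $b$ ending at $x_1$,'' and similarly stretch the $(u_\ell,x_2)$-edge to length $k$ and each $(u_\ell,x_i)$-path to length $k$, choosing internal vertices one at a time from large neighborhoods. But each such leg must be a path between two \emph{prescribed} endpoints $u_\ell$ and $x_i$ of a \emph{prescribed} length, and a greedy walk out of $u_\ell$ has no reason to land on $x_i$ at the right step. Already for $b=2$ you would need a common neighbor of $u_\ell$ and $x_1$, which need not exist: a $K$-almost-regular graph of arbitrarily large minimum degree can be triangle-free, in which case $N(u_\ell)\cap N(x_1)=\emptyset$ even though $u_\ell\in N(x_1)$. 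The freedom you cite---large neighborhoods and only $O(tsk)$ forbidden vertices---controls \emph{internal disjointness} once paths exist, but does not produce paths of a given length between fixed vertices. (This is exactly why the paper goes through the admissibility/heaviness machinery of Lemmas \ref{lem:cleaning1}--\ref{lem:additional-spiders} when it does need to build such legs, and why it avoids that machinery here by reducing to Lemma \ref{lem:heavy-paths}.) Two smaller issues: the paper handles $s=2$ separately (a length vector $(1,1)$ spider is literally a $2$-path, so Lemma \ref{lem:heavy-paths} applies immediately), which your argument doesn't address; and the factor $27$ you obtain from a $3$-coloring is a coincidence of $3^3=(2+1)^{2+1}$---the paper's $27$ is inherited from the $j=2$ case of Lemma \ref{lem:heavy-paths}, not from a new random-coloring step in this lemma.
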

\begin{proof}
	If $s = 2$, then the result follows by Lemma \ref{lem:heavy-paths}, since a spider with length vector
	$(1, 1)$ is heavy if and only if it is heavy when viewed as a path of length 2. So we may assume that
	$s\ge3$. In this case we have that $j\ge3$. Let $\cS$ denote the family of all heavy spiders 
in $G$ with length vector $(j_1,\ldots,j_s)$. Assume that $j_{i_1} = j_{i_2} = 1$. 
	By definition, there are at least $f(j,L)$ admissible spiders with leaf vector $(x_1,\ldots,x_s)$ and length vector $(j_1,\ldots,j_s)$. By Lemma \ref{lem:independent-paths} and Proposition \ref{prop}, among these spiders there are 
	$$\frac{{f(j,L)}}{j^2f(j-1,L)^2}\ge f(j-1,L)$$ 
	internally disjoint spiders, which give us at least $f(j-1)\ge f(2,L)$ common neighbors of $x_1$ and $x_2$. Thus
there are at least $f(2,L)$ paths of length $2$ with ends $x_{i_1},x_{i_2}$. By Definition \ref{def:admissible-paths}, any of these paths
is $2$-heavy. In particular, $x_{i_1}wx_{i_2}$ is $2$-heavy. Since $G$ has maximum degree at most $K\delta$, at most
$(K\delta)^{j-2}$ different members $P$ of $\cS$ can give rise to the same $x_{i_1}wx_{i_2}$.
By Lemma \ref{lem:heavy-paths}, the number of 2-heavy paths is at most $\frac{27n\delta^{2}}{L}$. 
Hence $|\cS|\leq \frac{27n\delta^2}{L}\cdot (K\delta)^{j-2}
=\frac{27K^{j-2}}{L}n\delta^j$.
\end{proof}

\begin{lem}\label{lem:bk-spiders-atmost1}
	Let $G$ be a $K$-almost-regular graph on $n$ vertices with minimum degree $\delta=\omega(1)$. Let $t,j_1,\ldots,j_s,k_1,\ldots,k_s$ be positive integers with each $j_i\le k_i$. Suppose that $j_i=1$ holds for at most one value of $i$. Then provided that $L$ is sufficiently large, if the number of heavy spiders with length vector $(j_1,\ldots,j_s)$ is at least $\frac{n\delta^{j}}{L}$ where $j=j_1+\cdots+j_s$, there exist $t$ internally disjoint spiders in $G$ with the same leaf vector and length vector $(k_1,\ldots,k_s)$.
\end{lem}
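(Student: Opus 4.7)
The plan is as follows. First, the assumption that there are at least $n\delta^j/L$ heavy spiders guarantees, via averaging, that there is a choice of leaf vector $(x_1,\ldots,x_s)$ (and possibly other auxiliary data, such as a common center or a distinguished internal structure) for which the configuration is favorable; I focus on the admissible family associated with one such choice.

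Second, for a fixed favorable leaf vector $(x_1,\ldots,x_s)$, by the definition of heaviness (Definition \ref{def:admissible-spiders}) there are at least $f(j,L)$ admissible spiders in $G$ with this leaf vector and length vector $(j_1,\ldots,j_s)$. By Lemma \ref{lem:independent-spiders} combined with Proposition \ref{prop}, at least $f(j,L)/[j^2 f(j-1,L)^2]\ge f(j-1,L)$ of them are pairwise internally disjoint; call this family $\mathcal{S}^*$. For $L$ sufficiently large, $|\mathcal{S}^*|$ far exceeds any constant depending on $s,t,k_1,\ldots,k_s$, leaving ample slack for a later greedy selection.

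Third, I would extend each $S\in\mathcal{S}^*$ to a $(k_1,\ldots,k_s)$-length spider $T_S$ with the same leaves $(x_1,\ldots,x_s)$. For each index $i$ with $j_i\ge 2$, I choose an internal edge $uv$ on the $i$-th leg of $S$ and replace it by a $u$--$v$ detour of length $k_i-j_i+1$; such a detour is obtained via BFS combined with a common-neighborhood argument in the spirit of Lemma \ref{lem:common-degree}, using $K$-almost-regularity and $\delta=\omega(1)$. For the at-most-one exceptional index $i^*$ with $j_{i^*}=1$ (so the leg is just the edge $w_S x_{i^*}$), I relocate the center of $T_S$ to a carefully chosen vertex and use the flexibility of the remaining $s-1\ge 1$ legs (all with $j_{i'}\ge 2$) to re-anchor them consistently. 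Finally, processing $\mathcal{S}^*$ one spider at a time and always avoiding the internal vertices of previously built $T_{S'}$'s, a greedy selection produces $t$ pairwise internally disjoint $T_S$'s: at every step the forbidden set has size $O(t\sum_i k_i)=O(1)$, while each BFS/common-neighborhood choice has $\Omega(\delta)=\omega(1)$ available options.

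The main obstacle will be securing detours of a prescribed length $k_i-j_i+1$ through $G\setminus W$ (where $W$ is the running forbidden set): $K$-almost-regularity controls only total walk counts, not walks of a specified length between specified endpoints, and an adversarial $G$ could a priori lack such detours. The resolution is to exploit the richness of $\mathcal{S}^*$, using the many choices of which internal edge to detour together with the admissible sub-spider structure (which supplies auxiliary short paths of various lengths via the heaviness conditions and Proposition \ref{prop}) to secure a valid detour at each greedy step. The $j_i=1$ case is similarly handled by the abundance of alternative configurations; the hypothesis that at most one $j_i$ equals $1$ is essential, since it leaves at least $s-1\ge 1$ legs with internal freedom available for the center relocation.
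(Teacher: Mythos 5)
There is a genuine gap, and you have already located it yourself: the plan hinges on finding, inside $G$, a $u$--$v$ detour of length exactly $k_i-j_i+1$ avoiding a running forbidden set, and you correctly note that $K$-almost-regularity gives no such guarantee. The sentence ``the resolution is to exploit the richness of $\mathcal{S}^*$'' is a placeholder, not an argument; nothing in the proposal shows how a large family of internally disjoint admissible spiders with a \emph{common leaf vector} produces paths of a prescribed length between two \emph{fixed interior} vertices of one of those spiders. The paper sidesteps this entirely: it never looks for detours within a leg. Instead, after a cleaning step (Lemma~\ref{lem: cleanning spiders}) it obtains a family $\cF$ with two properties, (i) every surviving leaf vector is shared by at least $f(j,L)/2$ members, and (ii) every proper subtree $T$ of a member lies in at least $(K\delta)^{j-e(T)}/L^2$ members of $\cF$. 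Property (ii) is the engine of the construction and has no analogue in your proposal. With it, Lemma~\ref{lem: (b,k,...,k) spiders} lengthens legs by a ``reflection'' process: fix $\gamma_i\in\{0,1\}$ to correct parity, take a subspider $R_0$ with length vector $(j_1-\gamma_1,\ldots,j_s-\gamma_s)$, then alternate between choosing $S_{\ell+1}\in\cF|_{R_\ell}$ (a spider in $\cF$ extending the current subspider) and choosing $T_{\ell+1}$ with the same leaf vector as $S_{\ell+1}$ but internally disjoint from it, then passing to a shorter subspider $R_{\ell+1}\subset T_{\ell+1}$. Concatenating a leg of $S_{\ell+1}$ with the corresponding leg of $T_{\ell+1}$ back to $R_{\ell+1}$ adds an even amount $2\eta_{i,\ell+1}$ to the $i$-th leg, and iterating $k$ times plus the initial $\gamma_i$ reaches exactly $k_i-j_i$. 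No BFS or detour-existence assumption is needed; every step uses only spiders already in $\cF$.

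Two further points in your proposal would also need repair even granting the detours. First, ``by averaging there is a favorable leaf vector'' is not what the lower bound $n\delta^j/L$ is used for; the paper uses it to show the cleaned family $\cF$ is nonempty after discarding at most half of the spiders (the per-leaf-vector cleaning) plus a small fraction (the per-subtree cleaning). A bare average over leaf vectors does not deliver the subtree-richness condition. Second, the ``relocate the center'' idea for the one index $i^*$ with $j_{i^*}=1$ is not worked out; in the paper this case is handled automatically by the $\gamma_i$ shift, and the hypothesis that at most one $j_i$ equals $1$ is used precisely to ensure that at each stage the current subspider $R_\ell$ has at most one leg of length zero, hence distinct leaves, so the leaf vectors remain well-defined $s$-tuples of distinct vertices throughout the iteration.
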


\begin{Definition}
	Let $\cF$ is a family of spiders in $G$. 
	\begin{equation*}
	\partial (\cF)=\{T: T\mbox{ is a proper subtree of some } F\in\cF \}.
	\end{equation*}
	For  each $T\in\partial(\cF)$, we define $\cF|_{T}$ to be the subfamily of members of $\cF$ that contain $T$.
\end{Definition}

\begin{lem}\label{lem: cleanning spiders}
	Let $G$ be a $K$-almost-regular graph on $n$ vertices with minimum degree $\delta=\omega(1)$. Suppose that the number of heavy spiders with length vector $(j_1,\ldots,j_s)$ is at least $\frac{n\delta^{j}}{L}$  where $j=j_1+\cdots+j_s$. Then provided that $L$ is sufficiently large, there exists a non-empty family $\cF$ of admissible spiders with length vector $(j_1,\ldots,j_s)$ such that the following hold.
\begin{enumerate}
\item For  each $S\in \cF$, at least $f(j,L)/2$ member of $\cF$ share the same leaf vector as $S$.
\item For any $T\in\partial(\cS)$, $|\cF|_{T}|\ge (K\delta)^{j-e(T)}/L^2$.
\end{enumerate}
\end{lem}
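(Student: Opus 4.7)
The plan is to produce $\cF$ by starting with the family of all heavy admissible spiders of length vector $(j_1,\ldots,j_s)$ and then running an iterative two-rule cleaning.

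The key preliminary observation, from Definition \ref{def:admissible-spiders}, is that heaviness of an admissible spider depends only on its leaf vector: a spider with leaf vector $(x_1,\ldots,x_s)$ and length vector $(j_1,\ldots,j_s)$ is heavy precisely when at least $f(j,L)$ admissible spiders of that length vector share the leaf vector $(x_1,\ldots,x_s)$. Hence every heavy admissible spider lies in a ``bucket'' indexed by its leaf vector, of size at least $f(j,L)$, whose members are all heavy. Taking $\cF_0$ to be the family of all heavy admissible spiders of length vector $(j_1,\ldots,j_s)$, the hypothesis gives $|\cF_0|\ge n\delta^j/L$, and condition (1) of the lemma is satisfied for $\cF_0$ with the stronger constant $f(j,L)$ in place of $f(j,L)/2$.

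Next I would initialize $\cF:=\cF_0$ and apply the following two cleaning rules until neither fires: (a) if some $S\in\cF$ has fewer than $f(j,L)/2$ spiders in $\cF$ sharing its leaf vector, delete every spider in $\cF$ with that leaf vector; (b) if some proper subtree $T$ of some member of $\cF$ satisfies $|\cF|_T|<(K\delta)^{j-e(T)}/L^2$, delete every spider in $\cF$ containing $T$. When the procedure halts, conditions (1) and (2) hold by construction, so the only thing left is to verify that $\cF$ remains nonempty.

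To that end, I first bound the total number $R_b$ of spiders removed by rule (b). Each proper subtree $T$ triggers rule (b) at most once, since after the trigger no remaining spider contains $T$. For each fixed tree shape on $e\le j-1$ edges, the number of copies in $G$ is at most $n(K\delta)^e$ (pick a root vertex and greedily choose neighbors for each edge), and the number of shapes with $e$ edges is bounded by a constant depending on $j$. Summing gives
\[R_b\;\le\;\sum_{e=0}^{j-1}O_j\bigl(n(K\delta)^e\bigr)\cdot\frac{(K\delta)^{j-e}}{L^2}\;=\;O_j\bigl(n\delta^j/L^2\bigr).\]
The charging argument for rule (a) is that a leaf vector $\vec{x}$ can trigger rule (a) only after more than $f(j,L)/2$ spiders of its bucket have already been removed, and since rule (a) only deletes spiders of a single leaf vector at a time, these prior losses must all come from rule (b). Hence the number of rule-(a) triggers is at most $2R_b/f(j,L)$, and each trigger removes fewer than $f(j,L)/2$ spiders, so the total rule-(a) removals $R_a$ satisfy $R_a\le R_b$. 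Combining, the total number of removals is at most $2R_b=O_j(n\delta^j/L^2)$, which is less than $|\cF_0|/2$ once $L$ is sufficiently large compared to $s,t,k,K$. Therefore $\cF$ is nonempty at termination.

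The main technical obstacle is exactly this interplay between the two rules. Naively one might fear a cascade in which rule-(b) deletions shrink buckets below the rule-(a) threshold, whose deletions then shrink subtree counts below the rule-(b) threshold, and so on. The charging argument above defeats this by exploiting the asymmetry between the two thresholds: each rule-(a) trigger is ``paid for'' by at least $f(j,L)/2$ prior rule-(b) removals, which forces $R_a\le R_b$ and keeps the total removal count of order $n\delta^j/L^2$, far below the initial $|\cF_0|\ge n\delta^j/L$.
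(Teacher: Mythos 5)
Your proof is correct and follows essentially the same route as the paper: the same initial family of all heavy spiders, the same two cleaning rules run to termination, and the same subtree-counting bound of order $n\delta^j/L^2$ for the subtree rule. The only difference is in bookkeeping for the bucket rule: where you charge each bucket deletion to at least $f(j,L)/2$ prior subtree-rule deletions to conclude $R_a\le R_b$, the paper bounds these deletions directly by (number of nonempty buckets) times $f(j,L)/2$, which is at most $|\cF^*|/2$ since each nonempty bucket starts with at least $f(j,L)$ members.
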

\begin{proof} Let $\cF^*$ be the family of all heavy spiders in $G$ with length vector $(j_1,\ldots,j_s)$. Suppose that $|\cF^*|\ge\frac{n\delta^j}{L}$.
For each vector $(x_1,\ldots, x_s)$ of $s$ distinct vertices in $G$, let $\cF^*_{(x_1,\ldots, x_s)}$ denote the
subfamily of members of $\cF$ that have leaf vector $(x_1,\ldots, x_s)$. By the definition of $\cF^*$,
for each $(x_1,\ldots, x_s)$ in $G$, $|\cF^*_{(x_1,\ldots, x_s)}|$ is either $0$ or at least $f(j,L)$.
Let $X$ denote the set of those $(x_1,\ldots, x_s)$ for which $|\cF^*_{(x_1,\ldots, x_s)}|\geq f(j,L)$.
Then 
\begin{equation}\label{X-bound}
|X|\leq |\cF^*|/f(j,L).
\end{equation}

Initially, let $\cF=\cF^*$ and for each $(x_1,\ldots, x_s)$ let $\cF_{(x_1,\ldots, x_s)}$ be the subfamily
of members of $\cF$ that have leaf vector $(x_1,\ldots, x_s)$.
We now do the following two types of cleaning on $\cF$. We update $\cF$ immediately after each step.
Type 1: if there exists some $T\in \partial(\cF)$ that is contained
in fewer than $(K\delta)^{j-e(T)}/L^2$ members of $\cF$ remove all the member of $\cF$ containing $T$.
Type 2: if there exists a vector $(x_1,\ldots, x_s)$ of $s$ distinct vertices such that $0<|\cF_{(x_1,\ldots, x_s)}|< \frac{f(j,L)}{2}$,
we remove all the members in $\cF_{(x_1,\ldots, x_s)}$ from $\cF$. We continue until either $\cF$ becomes empty or no more removal can be performed. It suffices to show that the final $\cF$ is non-empty as it clearly satisfies the requirements of the lemma.

To that end, note that the total number of members removed by a type 2 removal is fewer than $|X|\cdot \frac{f(j,L)}{2}\leq |\cF^*|/2$.
Now, we bound the number of members removed by a type 1 removal.
By Cayley's formula, the number of trees on $i$ vertices is at most $i^{i-2}$. Since $G$ has maximum degree at most $K\delta$, the number of deleted because of some $T\in\partial(\cF)$ being contained in fewer than $(K\delta)^{j-e(T)}/L^2$ members of $\cF$ is no more than
	\begin{equation*}
	\sum_{i=2}^{b+(s-1)k-1}\binom{sk}{i-1}i^{i-2} n(K\delta)^{i-1}\cdot(K\delta)^{j-(i-1)}/L^2\le\frac{(sk)^{2sk}n(K\delta)^j}{L^2}=\frac{n\delta^j}{2L}\cdot\frac{2(sk)^{2sk}K^j}{L},
	\end{equation*}
	which is less than $\frac{n\delta^{j}}{2L}\le\frac{|\cF^*|}{2}$ when $L$ is a sufficiently large constant; here the factor $\binom{sk}{i-1}$ upper bounds the number of positions of an $i$-vertex tree in a spider with length vector $(j_1,\ldots,j_s)$. So altogether we have removed fewer than $|\cF^*|$ members from $\cF^*$. So the final $\cF$ is non-empty. This completes our proof.
\end{proof}

\begin{lem}\label{lem: (b,k,...,k) spiders}
	Let $G$ be a $K$-almost-regular graph on $n$ vertices with minimum degree $\delta=\omega(1)$. Let $j_1,\ldots,j_s$ and $t$ be positive integers and assume that $j_i=1$ holds for at most one value of $i$. Suppose that $\cF$ is a non-empty family of admissible spiders with length vector $(j_1,\ldots,j_s)$ satisfying the conditions in Lemma \ref{lem: cleanning spiders}. Then for any integers $k_1,\ldots,k_s$ with each $k_i\ge j_i$, provided that $L$ is sufficiently large, there exists an $s$-tuple $(v_1,\ldots,v_s)$ of distinct vertices such that the following holds. For any vertex set $Z$ of size at most $L$ that is disjoint from $\{v_1,\ldots,v_s\}$, there exist an $s$-legged spider with leaf vector $(v_1,\ldots,v_s)$ and length vector $(k_1,\ldots,k_s)$ that is disjoint from $Z$. 
	
	In particular, there are $t$ internally disjoint spiders in $G$ with leaf vector $(v_1,\ldots,v_s)$ and length vector $(k_1,\ldots,k_s)$.
\end{lem}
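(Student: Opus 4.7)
The overall strategy is to extend the spiders in $\cF$ outward to produce a large collection of spiders in $G$ with length vector $(k_1,\ldots,k_s)$, and then extract at least $L+t$ of them that share a common leaf vector $(v_1,\ldots,v_s)$ and are pairwise internally disjoint. From this, both conclusions of the lemma are immediate: for any $Z$ with $|Z| \leq L$ disjoint from $\{v_1,\ldots,v_s\}$, at most $L$ of these $L+t$ spiders can meet $Z$, so at least $t$ avoid it, and the ``in particular'' statement is then a trivial consequence.

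My plan for the core construction is as follows. For each $S \in \cF$ with leaves $x_1^S,\ldots,x_s^S$, attach to each leaf a path of length $k_i - j_i$ in $G$, chosen greedily to be vertex-disjoint from $V(S)\setminus\{x_i^S\}$, from each other, and from any previously constructed extensions; since $\delta = \omega(1)$ and the total number of vertices to avoid is bounded in terms of $s, k_1,\ldots,k_s$, each core $S$ admits $\Omega(\delta^{K-j})$ such extensions, where $K = k_1 + \cdots + k_s$. The second stage is to fix a common leaf vector: by combining an averaging argument over leaf vectors with iterated applications of Lemma \ref{lem:spider-shrinking2} at each outgoing leaf, one coalesces the extensions from many different cores into a single common endpoint $v_i$ on each leg. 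To preserve internal disjointness of the resulting extended spiders one must draw cores from $\cF$ with mutually \emph{fully} vertex-disjoint interiors, which is enabled by condition (2) of Lemma \ref{lem: cleanning spiders}---guaranteeing $|\cF|_{T}| \geq (K\delta)^{j-e(T)}/L^2$ for every proper subtree $T$---together with condition (1), which provides the bulk count $f(j,L)/2$ needed to drive the averaging. Choosing $L$ sufficiently large compared to $s,t,k_1,\ldots,k_s$ ensures the quantities produced at each step exceed $L+t$.

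The main obstacle is this coordination step. A naive approach that begins by fixing a common core leaf vector $(x_1,\ldots,x_s)$ via condition (1) and extending each leaf outward fails to yield internally disjoint extended spiders, because the shared $x_i$'s become \emph{internal} vertices of the extensions and therefore wreck the required disjointness. One is thus forced to use a family of cores whose \emph{own} leaf vectors differ but whose outward extensions nevertheless terminate at a single fixed $(v_1,\ldots,v_s)$. Arranging for enough such cores with enough compatible extensions is exactly where the full strength of condition (2) of Lemma \ref{lem: cleanning spiders} is needed, and this coordination is the technical heart of the argument, paralleling the analogous step in Janzer's treatment in \cite{Janzer2}.
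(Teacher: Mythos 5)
There is a genuine gap: your proposal correctly isolates the hard step --- producing many internally disjoint spiders of length vector $(k_1,\ldots,k_s)$ that share a common leaf vector --- but it does not actually carry that step out, and the mechanism you sketch for it cannot work under the lemma's hypotheses. The lemma assumes only $\delta=\omega(1)$ and that $\cF$ is \emph{non-empty} and satisfies conditions (1)--(2) of Lemma \ref{lem: cleanning spiders}; there is no assumed lower bound on $\delta$ in terms of $n$ and no bulk lower bound on $|\cF|$ beyond what condition (2) gives (roughly $(K\delta)^{j-1}/L^2$). Consequently, the number of "outward extended" spiders you produce is of order $|\cF|\cdot\delta^{\sum k_i-j}$, which is vastly smaller than the $n^s$ possible leaf vectors when, say, $\delta$ is polylogarithmic in $n$; averaging over leaf vectors then yields nothing. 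Nor does Lemma \ref{lem:spider-shrinking2} help with the coalescing you describe: it merges paths emanating from a \emph{common} vertex into a spider centered at some intermediate vertex, so it cannot force extensions starting at distinct core leaves to terminate at one common vertex $v_i$. You acknowledge that "arranging for enough such cores with enough compatible extensions" is the technical heart, but that heart is precisely the content of the lemma, and it is left unproved.

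The paper's route is structurally different and sidesteps the averaging entirely. It fixes the target leaf vector $(v_1,\ldots,v_s)$ \emph{in advance}, as the leaf vector of a subspider $R_0$ of a single member of $\cF$ with legs shortened by $\gamma_i\in\{0,1\}$ chosen so that $k_i-j_i-\gamma_i$ is even. It then shows that for \emph{every} prescribed set $Z$ with $|Z|\le L$ one can build a spider with that leaf vector and length vector $(k_1,\ldots,k_s)$ avoiding $Z$: the extension of each leg is assembled not from arbitrary paths of $G$ but from a zig-zag chain $S_1,T_1,R_1,S_2,T_2,\ldots,T_{k+1}$ of members of $\cF$, where condition (2) guarantees one can always re-enter $\cF$ through the prescribed subtree $R_\ell$ while avoiding $Z$ and everything built so far, and condition (1) together with Lemma \ref{lem:independent-spiders} supplies $L^2$ internally disjoint copies $\cT(S)$ to dodge collisions; each leg gains length $\gamma_i+2\sum_\ell\eta_{i,\ell}=k_i-j_i$ (whence the parity bookkeeping, which your proposal never confronts because it never explains how the extensions are realized inside $\cF$). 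The uniformity in $Z$ is what then produces the $t$ internally disjoint copies greedily; one never needs $L+t$ pre-built disjoint spiders or a common-leaf-vector averaging argument.
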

\begin{proof} 
	For each $i\in[s]$, choose $\gamma_i\in\{0,1\}$ such that $k_i-j_i-\gamma_i$ is even. Let $k=\max\{k_1,\ldots,k_s \}$. Since $k_i-j_i-\gamma_i$ is an even integer between $0$ and $k$, there exist  $\eta_{i,1},\ldots,\eta_{i,k}\in\{0,1\}$ such that $k_i-j_i-\gamma_i=2\eta_{i,1}+\cdots+2\eta_{i,k}$. Let $L$ be a sufficiently large constant.
	
	Let $R_0$ be a subspider of some $S\in\cF$ with length vector $(j_1-\gamma_1,\ldots,j_s-\gamma_s)$. Here and throughout the proof, we allow that a subspider has legs of length $0$, and in this case, the leaf on its leg of length 0 is defined to be the center of this spider. Let $(v_1,\ldots,v_s)$ be the leaf vector of $R_0$. Since $j_i=1$ holds for at most one value of $i$, there is at most one leg of $R_0$ having length $0$. Therefore $v_1,\ldots,v_s$ are distinct vertices.
	
	By Condition 1 in Lemma \ref{lem: cleanning spiders}, and by Lemma \ref{lem:independent-spiders} and Proposition \ref{prop}, for each $S\in\cF$ we can fix a family $\cT(S)\subseteq\cF$ of $L^2$ internally disjoint spiders with the same leaf vector as $S$.
	Next we will define spiders $R_1,\ldots,R_k$, $S_1,\ldots,S_{k+1},T_1,\ldots,T_{k+1}$, with which we can build a desired spider.

	Since $R_0\in\partial(\cF)$, by Condition 2 in Lemma \ref{lem: cleanning spiders}, the number of spiders in $\cF$ that contain $R_0$ is  $|\cF|_{R_0}|\ge(K\delta)^{j-e(R_0)}/L^2=(K\delta)^{\gamma_1+\cdots+\gamma_s}/L^2$. As the maximum degree of $G$ is at most $K\delta$, the number of spiders that contain $R_0$ and some vertex in $Z\setminus V(R_0)$ is at most $|Z|(K\delta)^{j_1+\cdots+j_s -e(R_0)-1}=O(\delta^{\gamma_1+\cdots+\gamma_s-1})<|\cF|_{R_0}|$, where the last inequality holds because of $\delta=\omega(1)$. Since the leaf set of $R_0$ is disjoint from $Z$, it follows that there exists a spider $S_1'\in\cF|_{R_0}$ whose leaf set is disjoint from $Z$. As $\cT(S_1')$ is a family of internally disjoint spiders of size $L^2>|Z|+2$, there exist members $S_1,T_1$ of $\cT(S_1')$ such that $S_1$ and $T_1$ are disjoint from $Z$. Let $R_1$ be the subspider of $T_1$ with length vector $(j_1-\eta_{1,1},\ldots,j_s-\eta_{s,1})$.
	
	Iteratively, for $1\le \ell\le k$, suppose we have defined $R_{\ell}$ of length vector $(j_1-\eta_{1,\ell},\ldots,j_s-\eta_{s,\ell})$ which is a subspider of $T_{\ell}\in\cF$. We define $ S_{\ell+1},T_{\ell+1}$ and $R_{\ell+1}$ as follows.
	
	Choose one $S_{\ell+1}\in\cF|_{R_{\ell}}$ such that $V(S_{\ell+1})-V(R_{\ell})$ is disjoint from $Z\cup (V(S_1)\cup\cdots\cup V(S_{\ell}))\cup(V(T_1)\cup\cdots\cup V(T_{\ell}))$. This is possible by Lemma \ref{lem: cleanning spiders}(ii).	
	Then, choose one $T_{\ell+1}\in\cT(S_{\ell+1})\setminus\{S_{\ell+1}\}$ such that $V(T_{\ell+1})$ is disjoint from $Z\cup (V(S_1)\cup\cdots\cup V(S_{\ell}))\cup(V(T_1)\cup\cdots\cup V(T_{\ell}))$. This is possible  as $|\cT(S_{\ell+1})|=L^2>|Z|+(sk+1)k$.	
	Finally, if $\ell< k$, let $R_{\ell+1}$ be the subspider of $T_{\ell+1}$ with length vector $(j_1-\eta_{1,{\ell+1}},\ldots,j_s-\eta_{s,{\ell+1}})$.
	
	Now for $\ell\ge 1$, let $S_\ell$ have leaf vector $(x_{\ell,1},\ldots,x_{\ell,s})$ and let $R_\ell$ have leaf vector $(r_{\ell,1},\ldots,r_{\ell,s})$. Then for each $i\in[s]$, $v_i x_{1,i}$ forms a path of length $\gamma_i$ and for each $\ell\in[k]$ $x_{\ell,i}r_{\ell,i}x_{\ell+1,i}$ forms a path of length $2\eta_{i,\ell}$. By our definitions, for each $i\in[s]$ the vertex sequence $v_i x_{1,i}r_{1,i}x_{2,i}\cdots x_{k,i} r_{k,i}x_{k+1,i}$ forms a path $P_i$ of length $\gamma_i+2\eta_{i,1}+\cdots+2\eta_{i,k}=k_i-j_i$ that avoids $Z$. Since $S_\ell$ has length vector $(j_1,\ldots,j_s)$ and each $j_i>0$, its leaves $x_{\ell,1},\ldots,x_{\ell,s}$ are distinct. Since $R_\ell$ has length vector $(j_1-\eta_{1,\ell},\ldots,j_s-\eta_{s,\ell})$ and $j_i=1$ holds for at most one value of $i$, there is at most one leg of $R_\ell$ having length 0. Thus the leaves $r_{\ell,1},\ldots,r_{\ell,s}$ of $R_\ell$ are distinct. Now we can conclude that $P_1,\ldots,P_s$ are vertex disjoint paths that are disjoint from $Z$, and that each $P_i$ has length $k_i-j_i$ and has ends $v_i$ and $x_{k+1,i}$.
	
	By our choice of $T_{k+1}$, $T_{k+1}$ is a spider with leaf vector $(x_{k+1,1},\ldots,x_{k+1,s})$ and length vector $(j_1,\ldots,j_s)$, and $T_{k+1}$ is disjoint from $Z$ and intersects $\cup_{i=1}^s P_i$ only on its leaves. So $T:=T_{k+1}\cup (\cup_{i=1}^s P_i)$ is a spider with leaf vector $(v_1,\ldots,v_s)$ and length vector $(k_1,\ldots,k_s)$, and $T$ is disjoint from $Z$.
	
	Next we prove the particular part. Let $Z_1=\emptyset$. Then we can find a spider $T_1$ with leaf vector $(v_1,\ldots,v_s)$ and length vector $(k_1,\ldots,k_s)$. Iteratively, for $2\le i\le t$, let $Z_i=\cup_{\ell=1}^{i-1}(V(T_i)-\{v_1,\ldots,v_s\})$. Then $|Z_i|\le skt\le L$. So we can find a spider $T_i$ with leaf vector $(v_1,\ldots,v_s)$ and length vector $(k_1,\ldots,k_s)$ that is disjoint from $Z_i$. This allows us to find $t$ internally disjoint spiders $T_1,\ldots,T_t$ with leaf vector $(v_1,\ldots,v_s)$ and length vector $(k_1,\ldots,k_s)$. The proof is completed.
\end{proof}

\noindent\textbf{Proof of Lemma \ref{lem:bk-spiders-atmost1}:}
	Suppose that the number of heavy spiders in $G$ with length vector $(j_1,\ldots,j_s)$ is at least $\frac{n\delta^{j}}{L}$. By Lemma \ref{lem: cleanning spiders}, there exists a family of spiders  with length vector $(j_1,\ldots,j_s)$ satisfying the conditions in Lemma \ref{lem: cleanning spiders}. Then the result follows by Lemma \ref{lem: (b,k,...,k) spiders}. 	
\qed

\medskip

\noindent
\textbf{Proof of Lemma \ref{lem:heavy-spiders}:}
We may assume that $j_i=1$ holds for at most one value of $i$, as otherwise the lemma follows easily by Lemma \ref{lem : j1=j2=1}. Now suppose for a contrary that the number of heavy spiders in $G$ with length vector $(j_1,\ldots,j_s)$ is at least $\frac{K^{j-2}}{L}n\delta^{j}$, where $j=j_1+\cdots+j_s$. As $\frac{27K^{j-2}}{L}n\delta^{j}\ge \frac{n\delta^j}{L}$, applying Lemma \ref{lem:bk-spiders-atmost1} with $(k_1,\ldots,k_s)=(b,k,\ldots,k)$, we can find a copy of $t*S^s_{b,k}$ in $G$, which contradicts $G$ being $t*S^s_{b,k}$-free. 
\qed

\end{document}